\newcommand{\details}[1]{}
\newtheorem{theorem}{Theorem}[section]
\newtheorem*{theorem*}{Theorem}
\newtheorem{corollary}[theorem]{Corollary}
\newtheorem*{corollary*}{Corollary}
\newtheorem{lemma}[theorem]{Lemma}
\newtheorem*{lemma*}{Lemma}
\newtheorem*{claim*}{Claim}
\newtheorem{proposition}[theorem]{Proposition}
\newtheorem*{proposition*}{Proposition}
\newtheorem{conjecture}[theorem]{Conjecture}
\newtheorem*{conjecture*}{Conjecture}
\newtheorem{def-proposition}[theorem]{Definition-Proposition}
\theoremstyle{definition}
\newtheorem*{definition*}{Definition}
\newtheorem{remark}[theorem]{Remark}
\newtheorem{notation}[theorem]{Notation}
\newtheorem{example}[theorem]{Example}
\newtheorem*{example*}{Example}
\numberwithin{equation}{section}
\newcommand{\rme}{\mathrm {e}}
\newcommand{\rmi}{\mathrm {i}}
\newcommand{\ZZ}{\mathbb{Z}}
\newcommand{\QQ}{\mathbb{Q}}
\newcommand{\CC}{\mathbb{C}}
\newcommand{\GG}{\mathbb{G}}
\newcommand{\PP}{\mathbb{P}}
\newcommand{\Galmot}{{\mathcal{G}}{\mathrm{al}}_{\mathrm{mot}}}
\newcommand{\End}{\mathrm{End}}
\newcommand{\UR}{\mathrm{UR}}
\newcommand{\Lie}{\mathrm{Lie}\,}
\newcommand{\tor}{\mathrm{tor}}
\newcommand{\tR}{\widetilde{R}}
\newcommand{\oQQ}{\overline{\QQ}}
\newcommand{\cP}{\mathcal{P}}
\newcommand{\cE}{\mathcal{E}}
\newcommand{\e}{\mathrm{e}}
\newcommand{\ii}{\mathrm{i}}
\def\showanswers{0}
\newcommand{\soluzioni}[1]{
	\ifnum\showanswers=1
	
	#1 \vspace{\baselineskip}
	\fi
	\ifnum\showanswers=1
	\vspace{0\baselineskip} \hspace{2cm}
	\fi
}
\begin{document}

\title[A conjecture in Schanuel style for 1-motives]
{A conjecture in Schanuel style for 1-motives}

\author{Cristiana Bertolin}
\address{Dipartimento di Matematica, Universit\`a di Padova, Via Trieste 63, Padova}
\email{cristiana.bertolin@unipd.it}

\subjclass[2010]{11J81, 11J89}

\keywords{Weierstrass $\wp\,$, $\zeta$ and $\sigma$ functions, Serre functions, semi-elliptic Conjecture, Grothendieck-André period Conjecture, 1-motives}

\date{\today}

%\commby{}

%%% ----------------------------------------------------------------------

\begin{abstract}

Schanuel Conjecture contains all ``reasonable" statements that can be made on the values of \textit{the exponential function}. In particular it implies the Lin\-de\-mann-Weierstrass Theorem. In \cite{B02} we showed that Schanuel Conjecture has a geometrical origin: it is \textit{equivalent to} the Grothendieck-Andr\'{e} period Conjecture applied to a 1-motive without abelian part. 

In this paper, we state a conjecture in Schanuel style, which will imply conjectures in Lin\-de\-man\-n-Weierstrass style, for \textit{the semi-elliptic exponential function}, that is for the exponential map of an extension $G$ of an elliptic curve $\cE$ by the multiplicative group $\GG_m^r$. We propose the \textit{semi-elliptic Conjecture}, which concerns the exponential function, the Weierstrass $\wp \,$, $\zeta$ functions and Serre functions. The case of a trivial extension $G= \GG_m \times \cE$ has been treated in \cite{BW}, where we introduced the \textit{split semi-elliptic Conjecture}.
As in Schanuel's case, we expect that the semi-elliptic Conjecture contains all ``reasonable" statements that can be made on the values of the exponential function, of the Weierstrass $\wp \,$, $\zeta$ functions and of Serre $f_q$ functions.

We show that the semi-elliptic Conjecture has a geometrical origin (as Schanuel Conjecture): it is \textit{equivalent to} the Grothendieck-Andr\'{e} period Conjecture applied to a 1-motive whose underlying abelian part is an elliptic curve.

We also introduce the $\sigma$-Conjecture which involves the Weierstrass $\wp \,$, $\zeta$ and $\sigma$ functions and we show that this conjecture is a consequence of the Grothendieck-Andr\'{e} period Conjecture applied to an auto-dual 1-motive (or equivalently a consequence of the semi-elliptic Conjecture applied to adequate points).

\end{abstract}

%%% ----------------------------------------------------------------------

\maketitle

%%% ----------------------------------------------------------------------

%\tableofcontents

\section*{Introduction}

Let $\oQQ$ be the algebraic closure of the field $\QQ$ of rational numbers. Throughout the paper, unless otherwise specified, transcendence degrees (denoted $\mathrm{t.d.}$)
 are taken over $\QQ$. 
%In the same vein, we say that a number is algebraic (resp. transcendental) if it is algebraic (resp. transcendental) over $\QQ$, which means that it belongs (or not) to $\oQQ$. 

Consider the {\em the exponential function} of the torus $\GG_{m}$
\[
\begin{aligned} 
	\exp: \Lie (\GG_{m})_{\CC} =\CC & \longrightarrow \GG_m(\CC) = \CC^\times \\
	\nonumber t & \longmapsto \rme^t. 
\end{aligned}
\] 
In 1960 Schanuel proposed the following conjecture, which is expected to contain all ``reasonable" statements that can be made on the values of the exponential function:

\begin{conjecture} 
	[Schanuel's Conjecture] 
	If $t_1,\dots,t_s$ are $\QQ$--linearly independent complex numbers, then at least $s$ of the $2s$ numbers $t_1,\dots,t_s, \rme^{t_1},\dots, \rme^{t_s}$ are algebraically independent.
\end{conjecture}

\par\noindent Equivalently, the transcendence degree of the field $\QQ(t_1,\dots,t_s, \rme^{t_1},\dots, \rme^{t_s})$ is at least $s$. Schanuel conjecture implies the {\em Lindemann--Weierstrass Theorem} (1885): 
	if $\alpha_1,\dots,\alpha_s$ are $\QQ$--linearly independent algebraic numbers, then the numbers $\rme^{\alpha_1},\dots,\rme^{\alpha_s}$ are algebraically independent.

This article introduces a conjecture in Schanuel style, that will imply conjectures in Lin\-de\-man\-n-Weierstrass style, for \textit{the semi-elliptic exponential function}, that is for the exponential map of an extension $G$ of an elliptic curve $\cE$ by the multiplicative group $\GG_m^r$.

Let $\Omega = \ZZ \omega_1 + \ZZ  \omega_2$ be a lattice in $\CC$ with elliptic invariants $g_2,g_3.$
Let $\cE$ be the elliptic curve associated to $\Omega$ and denote by $k$ its field of endomorphisms, namely:
\[
k:= \End (\cE) \otimes_\ZZ \QQ=\begin{cases}
	\QQ & \text{ in the non--CM case,}
	\\
	\QQ(\tau) & \text{ in the CM case}
\end{cases}
\]
where $\tau:=\omega_2/\omega_1$. In both cases $\QQ \subseteq k \subseteq \oQQ.$

Consider the Weierstrass $\wp$, $\zeta$ and $\sigma$ functions relative to the lattice $\Omega$. 
Let $q_1, \dots,q_r$ be $r$ complex numbers which do not belong to $\Omega,$ and denote by $G$ the algebraic group which is an extension of the elliptic curve $\cE$ by $\GG_m^r$ parametrized by the points 
$Q_j=\exp_{\cE^*}(q_j)$\footnote{In the whole text we use small letters for elliptic logarithms of points of $\cE(\CC)$ or $\cE^*(\CC)$  which are written with capital letters.} for $j=1, \dots,r$
of the dual elliptic curve $\cE^*,$ that we identify with $\cE.$ Denote by 
\[f_{q_j}(z) = \frac{\sigma(z+q_j)}{\sigma(z)\sigma(q_j)} \rme^{-\zeta(q_j) z }\]
the Serre function associated to the complex number $q_j \in \CC \smallsetminus \Omega.$ \textit{The semi-elliptic exponential function} of $G$ (composed with a projective embedding) is
\begin{align*}
	\nonumber	\exp_{G}:  \Lie G_\CC & \longrightarrow   G_\CC (\CC) \subset \PP^{2+2r} (\CC) \\
	\nonumber (z, t_1, \dots, t_r) & \longmapsto \sigma(z)^3\Big[ \wp(z):\wp'(z):1 : \rme^{t_j} f_{q_j}(z): \rme^{t_j} f_{q_j}(z) \Big( \wp(z) + \frac{\wp'(z)-\wp'(q_j)}{\wp(z)-\wp(q_j)} \Big) \Big]_{j=1, \dots, r}.
\end{align*}

Before to state our conjecture \`{a}  la Scha\-nu\-el for this exponential function we need more notations.
If $q_1, \dots, q_r,p_1,\dots,p_n,$ are $k$--linearly independent complex numbers in $\CC \smallsetminus \Omega,$ we set
\[\tor (q_j, p_i): = \dim_k \Big( (\Omega\otimes_\ZZ\QQ ) \cap \big(\sum_{j=1}^r k q_j +  \sum_{i=1}^n k p_i \big) \Big) . \]
In the CM case, the integer $\tor (q_j,p_i)$ can be 0 or 1, in the non CM-case it can be 0, 1 or 2. In particular the dimension of the sub $k$--vector space $ <p_i,q_j>_{i,j}$ of  $ \CC / (\Omega\otimes_{\ZZ}\QQ)$ generated by the classes of
$p_1,\ldots,p_n,q_1,\ldots,q_r$ modulo $\Omega\otimes_{\ZZ}\QQ$ is $ n+r - \tor(p_i,q_j).$

Let $p_1,\dots,p_{n}, q_1, \dots,q_{r}$ be complex numbers in $\CC \smallsetminus \Omega.$ Denote by $B$ the smallest abelian sub-variety (modulo isogenies) of $\cE^{n} \times \cE^{*r}$ which contains a multiple of the point 
$(P_1, \dots, P_{n},Q_1, \dots , \\ Q_{r}) \in \cE^{n} \times \cE^{*r}(\CC).$
The inclusion $I:B \hookrightarrow \cE^{n}\times  \cE^{*r}$ induces group morphisms
\begin{align}
	\nonumber I :B &\longrightarrow  \cE^{n}\times  \cE^{*r}\\
	\nonumber b &\longmapsto \big(\gamma_1(b),\dots,\gamma_{n}(b),\gamma_{1}^*(b),\dots,\gamma_{r}^*(b)\big)
\end{align}
where  $\gamma_i\in\mathrm{Hom}_{\mathbb Q}(B,\cE) := \mathrm{Hom}(B,\cE) \otimes_\ZZ \QQ $ ({\it resp.} $\gamma^*_j \in\mathrm{Hom}_{\mathbb Q}(B,\cE^*)$) is the composition of $I$ with the projection on the $i$-th factor $\cE$ of $\cE^{n}$ ({\it resp.} on the $j$-th factor $\cE^*$ of $\cE^{*r}$) for $i=1,\dots,n$ ({\it resp.} $j=1,\dots,r$).
We denote with an upper-index ${}^t$ the transpose of a group morphism. 
Set $\beta_{i,j}: =\gamma_i^t\circ\gamma_j^* \in \mathrm{Hom}_{\mathbb Q}(B,B^*).$  The points $P_1,\dots,P_{n}$ in $ \cE(\CC)$ and $Q_1,\dots,Q_{r} $ in $ \cE^{*}(\CC)$ define two group homomorphisms $v: \ZZ^{n} \to \cE$ and $v^*: \ZZ^{r} \to \cE^{*} $ respectively. Denote by $x_1, \dots, x_{n}$ (\textit{resp}. $y^\vee_1, \dots , y^\vee_{r}$) the basis of $\ZZ^{n}$ (\textit{resp.} $\ZZ^{r}$)  such that
$v(x_i)=P_i \in \cE(\CC)$ and $ v^*(y^\vee_j) =Q_j \in \cE^*(\CC)$
for $ i=1, \dots,n $ and $ j=1, \dots,r. $ By \cite[Theorem 4.2]{BP} the kernel of the surjective map
\begin{equation}
	\begin{matrix}
		f:(X\otimes Y^\vee)_\QQ &\twoheadrightarrow &\sum_{i,j}\QQ (\beta_{i,j} + \beta_{i,j}^t) \subset \mathrm{Hom}_\QQ (B,B^*)\\
		\hfill x_i\otimes y_j^\vee &\mapsto & \beta_{i,j} + \beta_{i,j}^t \hfill
	\end{matrix}
\end{equation}
is the space of all $\QQ$--linear relations among the homomorphisms
$\beta_{ij}+\beta_{ij}^t$. The motivic interpretation of the $\QQ$--vector space
$< \beta_{ij}+\beta_{ij}^t > $ generated by the $\beta_{ij}+\beta_{ij}^t$
and of its space of relations $
\ker(f) $
will be discussed in Section \ref{MotGalGroup}, where these spaces will be related to the toric part of the Lie algebra of the unipotent radical of the corresponding $1$-motive.
A pair $(p_i,q_j)$ is said to admit a Cartier-dual pair if
$q_j$ coincides with one of the complex numbers $p_1,\ldots,p_n$ and
$p_i$ coincides with one of the complex numbers $q_1,\ldots,q_r$.
Equivalently, there exist indices $i'$ and $j'$ such that
$q_j=p_{i'}$ and $ p_i=q_{j'}.$
Set
\[
\mathcal{ D}_{n,r}
:=
\Big\{
(i,j)  \; \big\vert \quad
 p_i\neq q_j,\;
p_i,q_j\notin \Omega\otimes_{\ZZ}\QQ,\;
(p_i,q_j)\text{ admits a Cartier-dual pair}
\Big\}.
\]
Choose a subset $
\mathcal{ D}^{+}_{n,r}\subseteq \mathcal{ D}_{n,r} $
containing exactly one element from each Cartier-dual pair.
We can now state our conjecture \`{a}  la Scha\-nu\-el for the semi-elliptic exponential function:

\begin{conjecture}[Semi-elliptic Conjecture]\label{s-eSC}
Let $\Omega$ be a lattice in $\CC$. Let $s,n,n',n'',r,r',r''$ be integers such that
$s \geqslant 0,  0 \leqslant n \leqslant n'\leqslant n'', 0 \leqslant r \leqslant r'\leqslant r''.$ Let
\begin{itemize}
	\item $t_1, \dots,t_s$ be $\QQ$--linearly independent complex numbers;
	\item 	$ q_1, \dots,q_r,p_1,\dots,p_n$ be $k$--linearly independent complex numbers in $\CC \smallsetminus \Omega.$ Choose subsets $I\subseteq \{1,\ldots,n\}$ and $J\subseteq \{1,\ldots,r\}$ such that the classes of $
	\{p_i, q_j\}_{\substack{ i\in I \\ j\in J}}$ form a $k$--basis of the sub $k$--vector space $ <p_i,q_j>_{\substack{i=1, \dots,n \\ j=1, \dots,r}}$ of  $ \CC / (\Omega\otimes_{\ZZ}\QQ)$ generated by the classes of
	$p_1,\ldots,p_n,q_1,\ldots,q_r$ modulo $\Omega\otimes_{\ZZ}\QQ.$ In particular $\dim_k  <p_i,q_j>_{\substack{i=1, \dots,n \\ j=1, \dots,r}}= |I|+|J|;$
	\item 	$ q_{r+1}, \dots,q_{r'},p_{n+1},\dots,p_{n'}$ be complex numbers in $\CC \smallsetminus \Omega$ such that
		\begin{align} \label{condition}
		\nonumber	\dim_k  <p_i,q_j>_{\substack{i=1, \dots,n' \\ j=1, \dots,r'}} &= 	\dim_k  <p_i,q_j>_{\substack{i=1, \dots,n \\ j=1, \dots,r}} ,\\
			\dim_\QQ  < \beta_{i,j}+\beta_{i,j}^t>_{\substack{i \in I \cup \{n +1, \dots,n'\} \\ j \in J \cup \{ r+1 , \dots,r'\}}}& = (|I|+n'-n)(|J|+r'-r) 	-\frac{1}{2}|\mathcal{ D}_{n',r'}|,
		\end{align}
		where 
		 $< \beta_{i,j}+\beta_{i,j}^t>_{ \substack{i \in I \cup \{n +1, \dots,n'\} \\ j \in J \cup \{ r+1 , \dots,r'\}}}$ is the sub $\QQ$--vector space of $\mathrm{Hom}_\QQ(B,B^*) $ generated by the group homomorphisms $\beta_{i,j}+\beta_{i,j}^t$ for $i\in I \cup \{n +1, \dots,n'\}$ and $ j \in J \cup \{ r+1 , \dots,r'\}$ (here $B$ is the smallest abelian sub-variety of $\cE^{n'} \times \cE^{*r'}$ which contains the point 
		 $(P_1, \dots, P_{n'},Q_1, \dots ,  Q_{r'})$);
 \item 	$ q_{r'+1}, \dots,q_{r''},p_{n'+1},\dots,p_{n''}$ be complex numbers in $\CC \smallsetminus \Omega$ and for $i=n'+1,\ldots,n'', j=r'+1,\ldots,r'',$ $t_{ij}$ be complex numbers such that
 \begin{align} \label{condition2}
 	\nonumber	\dim_k  <p_i,q_j>_{\substack{i=1, \dots,n'' \\ j=1, \dots,r''}} &= 	\dim_k  <p_i,q_j>_{\substack{i=1, \dots,n \\ j=1, \dots,r}}, \\
 	\dim_\QQ  < \beta_{i,j}+\beta_{i,j}^t>_{\substack{i \in I \cup \{n +1, \dots,n''\} \\ j \in J \cup \{ r+1 , \dots,r''\}}}& = \dim_\QQ  < \beta_{i,j}+\beta_{i,j}^t>_{\substack{i \in I \cup \{n +1, \dots,n'\} \\ j \in J \cup \{ r+1 , \dots,r'\}}},\\
 	\nonumber \dim_\QQ 	< \textstyle{\sum_{\substack{ n'+1\leqslant i \leqslant n'' \\ r'+1 \leqslant j \leqslant r''}}}	\alpha_{ij}t_{ij}> & = (n''-n')(r''-r'),
 \end{align}
 where $\dim_\QQ 	< \textstyle{\sum_{\substack{ n'+1\leqslant i \leqslant n'' \\ r'+1 \leqslant j \leqslant r''}}}	\alpha_{ij} t_{ij} >$ is the sub $\QQ$--vector subspace of  $\CC/ 2 \pi \ii \QQ$ generated by the classes of  $\sum_{i,j}\alpha_{ij} t_{ij}$, with $\sum_{i,j}\alpha_{ij} x_i\otimes y_j^\vee \in \ker (f).$
\end{itemize}
Then the transcendence degree of the field generated over $\QQ$ by the numbers 
$$t_1, \dots,t_s, \rme^{t_1},\dots ,\rme^{t_s}, g_2,g_3, q_1, \dots,q_{r}, p_1, \dots,p_{n},$$
$$ \wp(q_1), \dots,\wp(q_{r}), \zeta(q_1), \dots,\zeta (q_{r}),\wp(p_1), \dots,\wp(p_{n}), \zeta(p_1), \dots,\zeta (p_{n}), $$
$$ f_{q_j}(p_i) \quad
\text{for } (i,j)\in \big( I\cup\{n+1,\ldots,n'\} \big) \times \big( J\cup\{r+1,\ldots,r'\}\big) \setminus \mathcal{ D}^{+}_{n',r'},$$
$$ t_{n'+1 \; r'+1}, \dots, t_{n''r''}, \rme^{t_{ij}}f_{q_j}(p_i) \quad \mathrm{for}\; i \in \{ n'+1, \dots, n''\} \; \mathrm{ and \; for}\; j \in \{ r'+1, \dots, r''\},$$
is at least  $s+2(r+n)+ (|I|+n'-n)(|J|+r'-r) -\frac{1}{2}
|\mathcal{D}_{n',r'}|+ (n''-n')(r''-r')$, unless $ 2 \pi \ii \subset \sum_l \QQ t_l$ and $\Omega \subset \sum_i  k p_i + \sum_j k q_j$ in which case it is at least $s+2(r+n)+(|I|+n'-n)(|J|+r'-r)-\frac{1}{2}
|\mathcal{D}_{n',r'}|+(n''-n')(r''-r') -1.$
\end{conjecture}

Some remarks about this conjecture:

(1) The parameters $t_{ij}$ with
$i\leqslant n'$ and $
j\leqslant r'$ do not appear explicitly in the above conjecture. Indeed, Theorem \ref{Teo:dimGal(M)} shows that they do not  contribute additional dimensions beyond those already accounted for by
$ \dim_k < p_i,q_j >$
and $\dim_{\QQ} <\beta_{ij}+\beta_{ij}^t>.$
Likewise, these parameters do not contribute to the additional term governed by $\ker(f)$. This phenomenon is compatible with the description of the unipotent radical in \cite[\S 6 2 and 1.1.1]{BP} and is illustrated by Examples (d2) and (e) of \cite[\S 2]{B-Futur}.

 (2)By \cite[Corollary 4.5]{BP} conditions \eqref{condition} and \eqref{condition2} imply that for $ i \in I \cup \{n +1, \dots,n''\} $ and $ j \in J \cup \{ r+1 , \dots,r''\}, $ the points $Q_{j},P_{i}$ 

- are not torsion points, and hence $\tor(q_j,p_i)_{\substack{i=1, \dots,n \\ j=1, \dots,r}} = \tor(q_j,p_i)_{\substack{i=1, \dots,n'' \\ j=1, \dots,r''}},$ 

- are not $k$--linearly dependent via an antisymmetric homomorphism, that is we do not have that $\phi(P_i)=Q_j$ (or $\phi(Q_j)=P_i$) with $\phi+\overline{\phi}=0.$

(3) A subtle point arises from Cartier duality. Indeed, two distinct pairs
$(p_i,q_j)$ and 
$(p_{i'},q_{j'})$
such that
$q_j=p_{i'},p_i=q_{j'}$
may define Cartier dual $1$-motives. In this situation they contribute the same element
$
\beta_{ij}+\beta_{ij}^t$
 and therefore should not be counted twice in the computation of
$
\dim_{\QQ}\langle \beta_{ij}+\beta_{ij}^t\rangle.$ 
Consequently, each Cartier-dual pair contributes only once to the expected lower bound in the above Conjecture.

(4) Set $\tau_p:=n-|I|$ and $\tau_q:=r-|J|.$ Since $|I|+|J|=n+r-\tor(q_j,p_i),$ $\tau_p+\tau_q= \tor (q_j, p_i).$
Notice that $\tau_p$ and $\tau_q$ depend on the chosen basis and are therefore not intrinsic invariants of the family
$p_1,\ldots,p_n,q_1,\ldots,q_r;$ only their sum $\tor (q_j, p_i)$ is intrinsic. Moreover we have the equality
\[ |I|+n'-n = n' - \tau_p       \qquad  \qquad |J|+r'-r =r' - \tau_q. \]

(5) The semi-elliptic Conjecture implies Schanuel Conjecture associated to the exponential function and the split semi-elliptic Conjecture associated to the split semi-elliptic exponential function in \cite[Conjecture 2.1]{BW}.

\medspace

In 1966 Grothendieck made an allusion to a conjecture about the transcendental degree of the field generated by the periods of an abelian variety. André wrote down Grothendieck conjecture in greater generality (\cite[Appendix 2]{B19}):

\begin{conjecture}[Grothendieck-André period Conjecture]\label{GAC}
	For any sub field $K$ of $\CC$ and for any (pure or mixed) motive $M$ defined over $K$,
	\[ \mathrm{t.d.}\, K \big(\mathrm{periods}(M)\big)\geq \dim \Galmot (M). \]
If the motive $M$ is defined over $\overline{\QQ},$ then equality holds.
\end{conjecture}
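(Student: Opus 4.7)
The statement to be established is the Grothendieck--Andr\'{e} periods conjecture in full generality: for every motive $M$ (pure or mixed) over any subfield $K\subseteq\CC$, the transcendence degree of $K(\mathrm{periods}(M))$ over $K$ is at least $\dim\Galmot(M)$, with equality when $K\subseteq\oQQ$. The natural framework is Tannakian. Let $\langle M\rangle^\otimes$ denote the Tannakian subcategory generated by $M$ in the category of motives over $K$, equipped with the de Rham and Betti fibre functors $\omega_{\mathrm{dR}}$ and $\omega_B$. Then $\Galmot(M)=\underline{\mathrm{Aut}}^\otimes(\omega_{\mathrm{dR}})$ and the period torsor $\Omega_M=\underline{\mathrm{Isom}}^\otimes(\omega_{\mathrm{dR}},\omega_B)$ is a $K$-scheme of dimension $\dim\Galmot(M)$. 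The comparison isomorphism between algebraic de Rham and Betti realisations over $\CC$ provides a canonical $\CC$-point $\pi_M\in\Omega_M(\CC)$, and $K(\mathrm{periods}(M))$ is the residue field of $\pi_M$.

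The upper bound $\mathrm{t.d.}_K K(\mathrm{periods}(M))\le\dim\Galmot(M)$ is formal: the periods are the coordinates of a single $\CC$-point on a $K$-variety of that dimension. The deep content of the conjecture lies in the lower bound, which asserts that $\pi_M$ is a generic point of $\Omega_M$, i.e.\ satisfies no ``exotic'' algebraic relation over $K$. My plan would be: (i) d\'evissage along the weight filtration to reduce to the case of pure motives; (ii) isotypic decomposition inside the Tannakian category $\langle M\rangle^\otimes$ to reduce to irreducible pure motives; (iii) treat each class of irreducible constituent with the appropriate transcendence input (Lindemann--Weierstrass and Baker for Tate and Kummer motives, Schneider--Lang, Chudnovsky and W\"{u}stholz for abelian motives, Nesterenko-type statements for modular motives, and the methods of \cite{B02} for 1-motives); (iv) reassemble via a motivic Ax--Schanuel principle, ensuring that generic independence at each irreducible factor promotes to generic independence of $\pi_M$ on the total torsor $\Omega_M$. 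The equality over $\oQQ$ then follows automatically from the matching upper bound.

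The principal obstacle is step (iii). For most irreducible motives no sufficiently sharp transcendence theorem is available: already for $\HH^1(\cE)$ of a generic elliptic curve the expected algebraic independence of the two periods and two quasi-periods is open, and the mixed Tate case subsumes Grothendieck's conjecture on the algebraic independence of odd zeta values. Step (iv) is likewise beyond current technology, since no motivic Ax--Schanuel theorem is known at this level of generality. For these reasons the conjecture in its stated form is one of the central open problems of transcendence theory; the strategy outlined here is a blueprint rather than a proof, and realistic progress proceeds by isolating classes of motives whose periods are governed by a Schanuel-type statement amenable to existing methods, which is precisely the route taken in the remainder of this paper.
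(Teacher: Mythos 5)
The statement you were asked about is precisely Conjecture \ref{GAC}: it is stated in the paper as a conjecture and is nowhere proved there, nor could it be, since it subsumes wide-open problems (already the algebraic independence of the periods and quasi-periods of a generic elliptic curve, and Grothendieck's conjecture in the mixed Tate case, as you note). Your submission correctly recognizes this: the only genuinely established part of your argument is the ``easy'' inequality $\mathrm{t.d.}\, K(\mathrm{periods}(M))\leq \dim \Galmot(M)$ for $K\subseteq\oQQ$, which follows formally from the period torsor being a $\Galmot(M)$-torsor of that dimension, while the lower bound is exactly the content of the conjecture and your steps (iii) and (iv) are, as you say yourself, a blueprint rather than a proof. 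This is consistent with what the paper actually does: rather than attempt a proof, it makes the conjecture explicit for 1-motives with elliptic abelian part (Theorem \ref{Teo:dimGal(M)}, Corollary \ref{GAPconjectureClean}), proves its equivalence with the semi-elliptic Conjecture (Theorem \ref{GA<=>Schanuel}), and establishes it only in the very special case of 1-motives defined by a CM elliptic curve with algebraic invariants and torsion points (Theorem \ref{ProofGP-torsion}), where it reduces to Chudnovsky's theorem. So there is no gap to repair on your side beyond what you have already flagged: no complete proof exists, and your honest assessment of where the obstructions lie (lack of sharp transcendence results for irreducible constituents and the absence of a motivic Ax--Schanuel principle) matches the state of the art reflected in the paper.
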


\par\noindent In this conjecture, $K (\mathrm{periods}(M)) $ denotes the field generated by the periods of $M$ over $K$ and $\Galmot (M)$ is the motivic Galois group of $M,$ that is the fundamental group of the tannakian category generated by $M$. Remark that without loss of generality, in this conjecture we may assume $K$ to be algebraically closed: just take $K=\overline{K}$. The Grothendieck-André period Conjecture furnishes the geometrical origin of the semi-elliptic Conjecture. In fact, consider the 1-motive
\begin{equation} \label{GPC-1-motive}
	M=[u:\ZZ \longrightarrow  G^n],  \; 
 u(1)=\big(R_1, \dots, R_n\big)  \in G^n(\CC),
\end{equation}
where $G$ is the extension of the elliptic curve $\cE$ by $\GG_m^r$ parametrized by the points 
$Q_1,   \ldots,  Q_{r}$, and 
\begin{align}
\label{R}	R_{i}&=\exp_G(p_i,t_{i1}, \dots ,t_{ir})\\
\nonumber	&=  \sigma(p_i)^3\Big[ \wp(p_i):\wp'(p_i):1 : \rme^{t_{ij}} f_{q_j}(p_i): \rme^{t_{ij}} f_{q_j}(p_i) \Big( \wp(p_i) + \frac{\wp'(p_i)-\wp'(q_j)}{\wp(p_i)-\wp(q_j)} \Big) \Big]_{j=1, \dots, r}
\end{align}
for $i=1, \dots,n.$	Remark that the 1-motive $M$ is univocally defined by the $2+r+n+rn$ numbers 
	\begin{equation}\label{Points}
		 g_2 \in \CC, \;\; g_3 \in \CC, \;\; q_j \in \CC \smallsetminus \Omega, \;\;p_i \in \CC \smallsetminus \Omega, \;\; t_{ij} \in \CC.
	\end{equation}
While the elliptic invariants $g_2$ and $g_3$ should satisfy $g_2^3- 27 g_3^2 \not= 0,$ the numbers $q_j,p_i$ and $t_{ij}$ are completely arbitrary. Throughout the paper, we assume that the field of definition $K$ of the $1$-motive $M$ is algebraically closed. The main theorem of this paper is

\begin{theorem}\label{GA<=>Schanuel}
	The semi-elliptic Conjecture 
	is equivalent to the Grothendieck-André period Conjecture applied to the 1-motive \eqref{GPC-1-motive}.
\end{theorem}

Our proof generalizes 

- the equivalence between Schanuel Conjecture and Conjecture \ref{GAC} applied to the 1-motive with no abelian part $M=[u:\ZZ \rightarrow \GG_m^s  ],
u(1)=(\rme^{t_1}, \dots, \rme^{t_s} ) \in  \GG_m^s (\CC)$ (see \cite[Corollaire 1.3]{B02}).	

- the equivalence between the split semi-elliptic Conjecture and  Conjecture \ref{GAC} applied to the 1-motive $
M=[u:\ZZ \rightarrow \GG_m^s \times \cE^n ], u(1) =(\rme^{t_1}, \dots, \rme^{t_s} , P_1, \dots, P_n ) \in (\GG_m^s\times \cE^n )(\CC),$
with $ P_i=[\wp(p_i): \wp'(p_i):1] $ for $i=1, \dots,n $ (see \cite[Theorem 4.3]{BW}).

If the points $q_j,p_i$ and $t_{ij}$ \eqref{Points} defining the 1-motive $M$ \eqref{GPC-1-motive} are linearly independents, the Grothendieck-André period Conjecture applied to $M$ furnishes a more readible conjecture (or respectively if the points involved in the semi-elliptic Conjecture \ref{s-eSC} are linearly independent, i.e. $n=n'=n''$ and $r=r'=r''$, this conjecture reads):

\begin{conjecture}[Semi-elliptic Conjecture for linearly independent complex numbers]\label{s-eSC-LI}
	Let $\Omega$ be a lattice in $\CC$. Let
	\begin{itemize}
		\item $t_1, \dots,t_s$ be $\QQ$--linearly independent complex numbers, and
		\item 	$ q_1, \dots,q_r,p_1,\dots,p_n,$ be $k$--linearly independent complex numbers in $\CC \smallsetminus \Omega.$ Choose subsets $I\subseteq \{1,\ldots,n\}$ and $J\subseteq \{1,\ldots,r\}$ such that the classes of $
		\{p_i, q_j\}_{\substack{ i\in I \\ j\in J}}$ form a $k$--basis of the sub $k$--vector space of  $ \CC / (\Omega\otimes_{\ZZ}\QQ)$ generated by the classes of
		$p_1,\ldots,p_n,q_1,\ldots,q_r$ modulo $\Omega\otimes_{\ZZ}\QQ.$ 
	\end{itemize}
	Then the transcendence degree of the field generated over $\QQ$ by the $2s+2+3r+3n+|I||J|$ numbers 
	$$t_1, \dots,t_s,\rme^{t_1},\dots ,\rme^{t_s}, g_2,g_3, q_1, \dots,q_r, p_1, \dots,p_n,$$
	$$ \wp(q_1), \dots,\wp(q_r), \zeta(q_1), \dots,\zeta (q_r),	\wp(p_1), \dots,\wp(p_n), \zeta(p_1), \dots,\zeta (p_n), $$
	$$  f_{q_j}(p_i) \quad \mathrm{for} \, i \in I  \; \mathrm{and} \;  j \in J ,  $$ 
	is at least  $s+2(r+n)+|I||J|$, unless $ 2 \pi \ii \QQ \subset \sum_l \QQ t_l$ and $\Omega \subset \sum_i  k p_i + \sum_j k q_j$ in which case it is at least $s+2(r+n)+|I||J| -1.$
\end{conjecture}

%The split semi-elliptic Conjecture introduced in \cite[Conjecture 2.1]{BW} is the semi-elliptic Conjecture for linearly independent complex numbers without the complex numbers $q_1, \dots, q_r.$

Starting from the semi-elliptic Conjecture we can propose several conjectures à la Lindemann-Weierstrass. Here we state two of them.
If in Conjecture \ref{s-eSC-LI} we assume the lattice to have algebraic invariants and the complex numbers $q_1, \dots,q_r,p_1,\dots,p_n,t_1, \dots,t_s$ to be algebraic, we get

\begin{conjecture}[Semi-elliptic LW Conjecture for linearly independent complex numbers]\label{s-eLWC}
	Let $\Omega$ be a lattice in $\CC$ with algebraic invariants. If 
	\begin{itemize}
		\item $t_1, \dots,t_s$ are $\QQ$--linearly independent algebraic numbers, and
		\item  $q_1, \dots, q_r,p_1,\dots,p_n$ are $k$--linearly independent algebraic numbers,
	\end{itemize} 
	then the $s+2(r+n)+rn$ numbers 
	$$\rme^{t_1},\dots ,\rme^{t_s} ,\wp(q_1), \dots,\wp(q_r), \zeta(q_1), \dots,\zeta (q_r), \wp(p_1), \dots,\wp(p_n), \zeta(p_1), \dots,\zeta (p_n), $$
	$$  f_{q_1}(p_1),\dots ,f_{q_1}(p_n),\dots , f_{q_r}(p_1), \dots ,f_{q_r}(p_n) $$
	are algebraically independent over $\oQQ $.
\end{conjecture}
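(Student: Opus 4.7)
The plan is to derive Conjecture \ref{s-eLWC} from the Semi-elliptic Conjecture \ref{s-eSC} by specializing to algebraic inputs. Under the Lindemann--Weierstrass hypotheses I would show that the two dimensional conditions defining case \textit{(1)} both hold; the lower bound furnished by case \textit{(1)} then coincides exactly with the cardinality of the purported algebraically independent family, which forces algebraic independence.

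First I would verify the intersection conditions required in case \textit{(1)}. Since each $t_l$ is algebraic, $\sum_l \QQ t_l \subset \overline{\QQ}$, while by Hermite--Lindemann every nonzero element of $2\pi\ii\QQ$ is transcendental; hence the intersection $(2\pi \ii \QQ)\cap \sum_l \QQ t_l$ is trivial. Similarly, the $p_i, q_j$ lie in $\overline{\QQ}$ and $k\subseteq \overline{\QQ}$, so $\sum_i k p_i+\sum_j k q_j\subset\overline{\QQ}$; on the other hand Schneider's theorem (valid because $g_2,g_3\in\overline{\QQ}$) implies that every nonzero period---and, after clearing denominators, every nonzero element of $\QQ\omega_1+\QQ\omega_2=\Omega\otimes_\ZZ\QQ$---is transcendental. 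Thus the elliptic intersection is $\{0\}$ as well, and the $k$-linear independence of the $p_i, q_j$ guarantees that no hidden collapse can occur.

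Next comes the bookkeeping step. Of the $2s+2+3r+3n+rn$ quantities listed in Conjecture \ref{s-eSC}, exactly $s+2+r+n$ of them---namely $t_l,g_2,g_3,q_j,p_i$---are algebraic by hypothesis and hence contribute $0$ to the transcendence degree over $\QQ$. Case \textit{(1)} asserts that the field $L$ generated over $\QQ$ by all $2s+2+3r+3n+rn$ numbers satisfies $\mathrm{t.d.}\, L \geq s+2(r+n)+rn$. Since $L$ is algebraic over the subfield $L_0$ generated over $\QQ$ by the remaining $s+2(r+n)+rn$ numbers $\rme^{t_l},\wp(q_j),\zeta(q_j),\wp(p_i),\zeta(p_i),f_{q_j}(p_i)$, we have $\mathrm{t.d.}\, L_0 = \mathrm{t.d.}\, L \geq s+2(r+n)+rn$; because $L_0$ is generated by exactly that many elements, they must be algebraically independent over $\QQ$. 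As algebraic independence is preserved by algebraic extensions of the base field, we deduce algebraic independence over $\overline{\QQ}$, as required.

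The implication itself is, modulo the classical transcendence inputs of Hermite--Lindemann and Schneider used to locate case \textit{(1)}, purely formal; the genuine obstacle is Conjecture \ref{s-eSC} on which the argument rests, and which the paper subsequently addresses by translating it, via its equivalence with the Grothendieck--Andr\'e periods Conjecture applied to an adequate 1-motive with elliptic abelian part, into a motivic statement that is out of reach in full generality at present.
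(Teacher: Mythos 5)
Your derivation is correct and matches the paper's own (very brief) justification: the paper likewise invokes case \textit{(1)} of the semi-elliptic Conjecture, using the transcendence of $\pi$ and Schneider's theorem for nonzero periods of a $\wp$ function with algebraic invariants to see that both intersection conditions vanish, after which the counting argument you spell out gives algebraic independence over $\oQQ$. You have simply filled in the bookkeeping that the paper leaves implicit.
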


We apply Conjecture \ref{s-eSC-LI}  with $ 2 \pi \ii \QQ \nsubseteq \sum_l \QQ t_l$ and $\tor(p_i,q_j)=0,$ since $\pi$ is transcendental and by Schneider's theorem the poles $\not=0$ of a Weierstrass $\wp$ function with algebraic invariants are transcendental. For $r=n=0$ Conjecture \ref{s-eLWC} is the Lindemann-Weierstrass Theorem, and for
 $r=0$ it is just the split semi-elliptic LW Conjecture stated in \cite[Conjecture 2.3]{BW}.
%Two cases of the semi-elliptic LW Conjecture are known: 
% the case $r=n=0$ which is the Lindemann-Weierstrass Theorem, and
%the case $s=r=0$ and no $\zeta$ function which is the Phi\-lip\-pon--W\"ustholz Theorem: {\em the values of a Weierstrass $\wp$ function, with algebraic invariants and with complex multiplication, at $k$--linearly independent algebraic numbers, are algebraically independent } (see \cite[Corollaire 0.3]{P83} and \cite[Korollar 2]{W83}).

If we want a conjecture in Lindemann-Weierstrass style for an extension $G$ defined over $\oQQ$, that is the lattice has algebraic invariants, the complex numbers $\wp(q_1), \dots,\wp(q_r),p_1,\dots,p_n$ and $t_1, \dots,t_s$ are algebraic, from Conjecture \ref{s-eSC-LI}, we get

\begin{conjecture}[LW Conjecture for $G$ defined over $\oQQ$]
	Let $\Omega$ be a lattice in $\CC$ with algebraic invariants. If 
	\begin{itemize}
		\item $t_1, \dots,t_s$ are $\QQ$--linearly independent algebraic numbers, and
		\item  	$ q_1, \dots,q_r,p_1,\dots,p_n$ are $k$--linearly independent complex numbers in $\CC \smallsetminus \Omega$, such that $\tor (q_j) =0 $ and  $\wp(q_1), \dots,\wp(q_r),p_1,\dots,p_n$ are algebraic,
	\end{itemize} 
	then the $s+2(n+r)+rn$ numbers 
	$$\rme^{t_1},\dots ,\rme^{t_s}, q_1, \dots,q_r, \zeta(q_1), \dots,\zeta (q_r),\wp(p_1), \dots,\wp(p_n), \zeta(p_1), \dots,\zeta (p_n),$$
	$$ f_{q_1}(p_1), \dots ,f_{q_1}(p_n),\dots , f_{q_r}(p_1), \dots , f_{q_r}(p_n)  $$
	are algebraically independent over $\oQQ $.
\end{conjecture}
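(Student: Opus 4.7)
The plan is to derive this conjecture from case \textit{(1)} of the semi-elliptic Conjecture (Conjecture \ref{s-eSC}), following the strategy indicated in the text preceding the statement. The key step is to check that the hypotheses on the $t_l$, on the $p_i$, on the $\wp(q_j)$, and on the lattice invariants force each of the three dimensions appearing in the twelve-case structure of Conjecture \ref{s-eSC} to vanish, so that case \textit{(1)} with its maximal lower bound $s+2(r+n)+rn$ applies.

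The vanishing $\dim_\QQ\bigl((2\pi\ii\,\QQ)\cap(\sum_l \QQ t_l)\bigr)=0$ is immediate, since each $t_l$ is algebraic while $2\pi\ii$ is transcendental. The nontrivial input is $\dim_k\bigl((\Omega\otimes_\ZZ\QQ)\cap(\sum_i k p_i+\sum_j k q_j)\bigr)=0$. I would argue by contradiction: suppose $\omega=v_p+v_q$ lies in this intersection, with $v_p=\sum_i a_i p_i$ algebraic and $v_q=\sum_j b_j q_j$. Choose $N\in\ZZ_{>0}$ with $N\omega\in\Omega$, apply $\exp_\cE$, and use the parity of $\wp$ together with $N\omega\in\Omega$ to obtain $\wp(N v_q)=\wp(N v_p)$. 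By Schneider's theorem (available because $g_2,g_3\in\oQQ$), the value $\wp(N v_p)$ is transcendental whenever $N v_p\in\oQQ\setminus\Omega$, while the iterated addition formulas, together with the complex-multiplication formulas for elements of $k$ in the CM case, express $\wp(N v_q)$ as a rational function with algebraic coefficients of the algebraic numbers $\wp(q_j),\wp'(q_j)$. The resulting contradiction collapses $v_p$ to $0$, and then $\omega=v_q$ lies in $(\Omega\otimes_\ZZ\QQ)\cap\sum_j k q_j=\{0\}$ by hypothesis.

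Once case \textit{(1)} is in place, the conclusion is a clean count. Among the $2s+2+3r+3n+rn$ numbers listed in Conjecture \ref{s-eSC}, precisely $s+2+r+n$ are algebraic by assumption, namely $t_1,\dots,t_s$, $g_2$, $g_3$, $\wp(q_1),\dots,\wp(q_r)$, and $p_1,\dots,p_n$. Since adjoining algebraic numbers does not change transcendence degree over $\QQ$, the remaining $s+2r+2n+rn$ numbers $\rme^{t_l},\, q_j,\, \zeta(q_j),\, \wp(p_i),\, \zeta(p_i),\, f_{q_j}(p_i)$ must generate an extension of transcendence degree at least $s+2r+2n+rn$ over $\oQQ$. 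Since this matches their cardinality, they are algebraically independent over $\oQQ$, as required.

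The hard part will be the verification that the combined intersection vanishes: the hypothesis only provides vanishing of the $q$-part separately, and the mixed situation, in which an element of $\Omega\otimes_\ZZ\QQ$ decomposes as an algebraic piece plus a $k$-combination of the $q_j$, must be excluded. This is precisely where Schneider's transcendence theorem is played off against the algebraicity of $\wp$-coordinates under addition and, in the CM case, under multiplication by $k$. Beyond this point, no further transcendence input is needed to extract the algebraic independence from the semi-elliptic Conjecture.
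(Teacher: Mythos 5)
Your derivation is correct and follows exactly the route the paper intends: specialize case \textit{(1)} of the semi-elliptic Conjecture \ref{s-eSC}, using the transcendence of $\pi$ for the $t_l$-condition and Schneider's theorem (algebraic invariants) for the elliptic condition, and then discard the $s+2+r+n$ numbers that are algebraic by hypothesis to obtain the algebraic independence of the remaining $s+2(n+r)+rn$ numbers over $\oQQ$. The only point the paper leaves implicit is the vanishing of $\dim_k\bigl((\Omega\otimes_\ZZ\QQ)\cap(\sum_i k p_i+\sum_j k q_j)\bigr)$, and your argument for it --- reducing a putative relation modulo $\Omega$, comparing the transcendental value $\wp(Nv_p)$ (Schneider, since $Nv_p$ is algebraic and not a pole) with the algebraic value $\wp(Nv_q)$ obtained from the addition and complex-multiplication formulas applied to the algebraic points $Q_j$ --- is sound, so your proposal is a faithful completion of the paper's sketch rather than a different proof.
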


In the Grothendeick-André period Conjecture applied to the 1-motive \eqref{GPC-1-motive} (or in the semi-elliptic Conjecture) only the Weierstrass $\wp$ and $\zeta$ functions and Serre functions play a role. We now want to involve also the Weierstrass $\sigma$ function.
As we will show in Theorem \ref{GA=>Sigma}, Conjecture \ref{GAC} applied to the auto-dual 1-motive \eqref{eq:sigma2} implies the following conjecture:

\begin{conjecture}
	[$\sigma$-Conjecture]\label{SigmaC}
	Let $\Omega$ be a lattice in $\CC.$ If 
	$ p_1,\dots,p_n$ are $k$--linearly independent complex numbers in $\CC \smallsetminus \Omega,$
	then the transcendence degree of the field  
	$$ \QQ \big(g_2,g_3,p_1, \dots,p_n, \wp(p_1), \dots,\wp(p_n), \zeta(p_1), \dots,\zeta (p_n),  \sigma(p_1), \dots,\sigma (p_n) \big)$$ 
	is at least $ 2n + \dim_k < p_i>_{i} + \dim_\QQ < \zeta(p_i) p_i>_{i=1,\tor(p_i)}$
	where 
	\begin{itemize}
		\item $< p_i>_{i}$ is the sub $k$--vector space of $\CC / (\Omega \otimes_\ZZ \QQ)$ generated by the classes of  $ p_1, \dots, p_n $  modulo $\Omega \otimes_\ZZ \QQ,$ and 
		\item $< \zeta(p_i) p_i>_{i=1,\tor(p_i)}$ is the sub $\QQ$--vector space of $\CC / 2 \pi \ii \QQ$ generated by the classes of the complex numbers $ \zeta(p_i) p_i$  modulo $2 \pi \ii \QQ$ if $\tor(p_i) \not =0$ (the space
		$< \zeta(p_i) p_i>_{i=1,\tor(p_i)}$
		is understood to be trivial if $\tor(p_i)=0$).
	\end{itemize} 
\end{conjecture}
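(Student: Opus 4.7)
The plan is to specialize the strategy of Theorem \ref{GA<=>Schanuel} to the auto-dual 1-motive \eqref{GGPC-1-motive-severalP=Q} and then apply the Grothendieck--André periods Conjecture \ref{GAC}. By analogy with \eqref{GPC-1-motive}, this 1-motive is of the shape $M=[\ZZ^n\to G^n]$ with $G$ the extension of $\cE$ by $\GG_m^n$ whose extension parameters coincide with the $\cE$-coordinates of the rational points, i.e.\ $r=n$ and $q_j=p_j$; the points $R_i=\exp_G(p_i,t_{i1},\dots,t_{in})$ are taken with a symmetry constraint on the $t_{ij}$'s realizing the isomorphism $M\cong M^{\ast}$ with the Cartier dual.

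First I would unwind the description of $K\bigl(\mathrm{periods}(M)\bigr)$ given by \eqref{field} in the case $q_j=p_j$. Using the defining formula
\[
f_{p_j}(p_i)=\frac{\sigma(p_i+p_j)}{\sigma(p_i)\sigma(p_j)}\,\rme^{-\zeta(p_j)p_i}
\]
together with the Weierstrass identity $\wp(u)-\wp(v)=-\sigma(u+v)\sigma(u-v)/\bigl(\sigma(u)^{2}\sigma(v)^{2}\bigr)$, the ratios of Serre values indexed by the pairs $(i,j)$ and $(j,i)$, combined with the quasi-periodicity identities established in Sections \ref{ValuesAtTorsionPoints} and \ref{Formulae}, recover each individual value $\sigma(p_i)$ as an element of $K\bigl(\mathrm{periods}(M)\bigr)$ modulo the other $\wp$, $\zeta$ generators. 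Hence $K\bigl(\mathrm{periods}(M)\bigr)$ contains $\QQ\bigl(g_2,g_3,p_i,\wp(p_i),\zeta(p_i),\sigma(p_i)\bigr)_{i=1,\dots,n}$, while conversely the auxiliary generators $\omega_k,\eta_k,2\pi\ii,t_{ij},\rme^{t_{ij}}$ appearing in \eqref{field} are controlled algebraically by these generators together with period contributions already accounted for by the auto-duality.

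Next I would compute $\dim\Galmot(M)$ via Theorem \ref{Teo:dimGal(M)}. The auto-duality identifies the ``$p$''- and ``$q$''-contributions, yielding the leading term $2n$ coming from the $\wp$ and $\zeta$ values; the weight $-1$ piece of $M$ contributes $\dim_k\langle p_i\rangle_i$; and the self-pairing on the toric part contributes $\dim_\QQ\langle\zeta(p_i)p_i\rangle_{i=1,n_2}$ for the $n_2$ indices whose $p_i$-class vanishes in $\cE(\CC)\otimes\QQ$, the distinction between $n_2\in\{0,1\}$ (CM case) and $n_2\in\{0,1,2\}$ (non-CM case) coming from $\dim_\QQ k$. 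Applying Conjecture \ref{GAC} then gives
\[
\mathrm{t.d.}\,K\bigl(\mathrm{periods}(M)\bigr)\ \geq\ \dim\Galmot(M)\ =\ 2n+\dim_k\langle p_i\rangle_i+\dim_\QQ\langle\zeta(p_i)p_i\rangle_{i=1,n_2},
\]
which is exactly the bound stated in Conjecture \ref{SigmaC}.

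The main obstacle will be the first step: verifying that the field of periods of this specific auto-dual 1-motive really contains each \emph{individual} $\sigma(p_i)$, rather than only the symmetric combinations $\sigma(p_i+p_j)/\bigl(\sigma(p_i)\sigma(p_j)\bigr)$ naturally output by the Serre functions. Isolating the individual values will require carefully stringing together the $\wp$-difference identity with the addition and doubling formulae of Section \ref{Formulae}. A secondary obstacle is the bookkeeping in the Galois group count: the split between the CM and non-CM cases and the three possible values of $n_2$ must be tracked through the auto-duality reduction inside Theorem \ref{Teo:dimGal(M)} in order to match the stated bound exactly.
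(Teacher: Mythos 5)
There is a genuine gap, and it sits exactly where you flag your ``main obstacle'': your proposed mechanism for getting the individual values $\sigma(p_i)$ into the period field cannot work, and you never resolve it. Off-diagonal Serre values together with the identity $\wp(u)-\wp(v)=-\sigma(u+v)\sigma(u-v)/\big(\sigma(u)^2\sigma(v)^2\big)$ only ever produce the symmetric combinations $\sigma(p_i+p_j)/\big(\sigma(p_i)\sigma(p_j)\big)$ and $\sigma(p_i+p_j)\sigma(p_i-p_j)/\big(\sigma(p_i)^2\sigma(p_j)^2\big)$; no amount of quasi-periodicity manipulations isolates a single $\sigma(p_i)$ from these. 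The paper's auto-dual 1-motive \eqref{GGPC-1-motive-severalP=Q} is built differently: it uses only the \emph{diagonal} data $R_i=\exp_{G_{P_i}}(p_i,t_i)$ with the toric coordinate prescribed to be $t_i=\zeta(p_i)p_i$, so that by the duplication formula \eqref{sigma(mz)} with $m=2$ one has $\rme^{\zeta(p_i)p_i}f_{p_i}(p_i)=\sigma(2p_i)/\sigma(p_i)^2=-\sigma(p_i)^2\,\psi_2(\wp(p_i),\wp'(p_i))$ (formula \eqref{eq:sigma1}); hence $\sigma(p_i)^2$ enters through the \emph{coordinates of the point $R_i$}, i.e.\ through the field of definition, not as a period and not via cross terms. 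Your vague ``symmetry constraint on the $t_{ij}$'' misses this: with unspecified $t_{ij}$ the exponential factors $\rme^{t_{ij}}$ cannot be cancelled algebraically and the $\sigma(p_i)$ simply do not lie in $K(\mathrm{periods}(M))$; moreover, keeping all $n^2$ toric coordinates enlarges both the period field (extra generators $\rme^{t_{ij}}f_{p_j}(p_i)$, $t_{ij}$) and the Galois group, and removing those extra generators at the end costs you in the lower bound.

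The Galois-group count is also misstated. By Proposition \ref{dimMpp} one has $\dim\Galmot(M)=\frac{4}{\dim_\QQ k}+3\dim_k\langle p_i\rangle_i+\dim_\QQ\langle\zeta(p_i)p_i\rangle_{i=1,n_2}$: the reductive part contributes $4/\dim_\QQ k$, the abelian part contributes $2\dim B=2\dim_k\langle p_i\rangle_i$, and $\dim Z'(1)=\dim_k\langle p_i\rangle_i$ because the identity is \emph{not} an antisymmetric endomorphism --- a point that requires proving the $\QQ$-linear independence of the $\beta_{i,i}+\beta_{i,i}^t$, not just invoking a ``self-pairing''. In particular your claimed equality $\dim\Galmot(M)=2n+\dim_k\langle p_i\rangle_i+\dim_\QQ\langle\zeta(p_i)p_i\rangle_{i=1,n_2}$ is false. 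The stated bound of the $\sigma$-Conjecture only emerges after the bookkeeping carried out in the proof of Theorem \ref{GA<=>Sigma}: one observes that $\overline{K}=\overline{\QQ(g_2,g_3,\wp(p_i),\sigma(p_i))}_i$, that $t_i=\zeta(p_i)p_i$ and $2\pi\ii$ are not new transcendence contributions (Legendre relation), and then one removes $\omega_1,\eta_1$ (CM) or $\omega_1,\omega_2,\eta_1,\eta_2$ (non-CM) when $n_2=0$, whereas when $n_2\neq 0$ the torsion $p_i$'s make the relevant $\omega$'s and $\eta$'s algebraic over the target field (via $\zeta(\omega/m)-\eta(\omega)/m\in\overline{\QQ(g_2,g_3)}$), which is precisely what absorbs the discrepancy $\frac{4}{\dim_\QQ k}-2n_2$. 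This case analysis (CM versus non-CM, $n_2\in\{0,1,2\}$) is absent from your sketch, so even granting the first step your argument does not yet yield the asserted inequality.
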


The values of the Weierstrass $\sigma$ function at $k$--linearly independent complex numbers do not appear in the Gro\-then\-dieck-Andr\'{e} period Conjecture as periods but as coordinates of points defining the auto-dual 1-motive \eqref{eq:sigma2}.

If $g_2$ and $g_3$ are algebraic, by Corollary \ref{ImplicationsGA} the dimension of the $\QQ $-vector $ < \zeta(p_i) p_i>_{i=1,\tor(p_i)}$ is $\tor(p_i)$ and so the $\sigma$-Conjecture becomes 

\begin{conjecture}
	[$\sigma$-Conjecture with $g_2$ and $g_3$ algebraic]\label{SigmaCg_23Algebraic}
	Let $\Omega$ be a lattice in $\CC.$ Assume $g_2$ and $g_3$ to be algebraic. If 
	$ p_1,\dots,p_n,$ are $k$--linearly independent complex numbers in $\CC \smallsetminus \Omega,$
	then at least $3n$ of the numbers 
	$$p_1, \dots,p_n, \wp(p_1), \dots,\wp(p_n), \zeta(p_1), \dots,\zeta (p_n),  \sigma(p_1), \dots,\sigma (p_n)$$ 
	are algebraically independent over $\oQQ $.
\end{conjecture}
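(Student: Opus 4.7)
The strategy is to derive Conjecture \ref{SigmaCg_23Algebraic} as a direct specialization of the general $\sigma$-Conjecture (Conjecture \ref{SigmaC}) under the additional algebraicity hypothesis on $g_2$ and $g_3$. The plan is to carry out three short steps, all of which are essentially formal once the preceding conjecture and the preceding Lemma \ref{ImplicationsGA} are available.

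First, I would apply Conjecture \ref{SigmaC} to the given $k$-linearly independent complex numbers $p_1, \dots, p_n \in \CC \smallsetminus \Omega$ to obtain
\[
\mathrm{t.d.}\,\QQ\bigl(g_2, g_3, p_i, \wp(p_i), \zeta(p_i), \sigma(p_i)\bigr)_{i=1, \dots, n} \;\geq\; 2n + \dim_k <p_i>_i + \dim_\QQ <\zeta(p_i)p_i>_{i=1,n_2}.
\]
Second, I would invoke Lemma \ref{ImplicationsGA}: once $g_2$ and $g_3$ are algebraic, that lemma asserts that $\dim_\QQ <\zeta(p_i)p_i>_{i=1,n_2} = n_2$. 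Substituting this together with the defining identity $n_2 = n - \dim_k <p_i>_i$ from the statement of Conjecture \ref{SigmaC}, the right-hand side of the inequality collapses to $2n + \dim_k <p_i>_i + (n - \dim_k <p_i>_i) = 3n$.

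Third, since $g_2, g_3 \in \oQQ$, adjoining them preserves transcendence degree over $\oQQ$, so
\[
\mathrm{t.d.}_{\oQQ}\,\oQQ\bigl(p_i, \wp(p_i), \zeta(p_i), \sigma(p_i)\bigr)_{i=1, \dots, n} \;\geq\; 3n,
\]
which is exactly the statement that at least $3n$ of these $4n$ numbers are algebraically independent over $\oQQ$.

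The main (and only) obstacle is the inputs themselves: the derivation rests entirely on Conjecture \ref{SigmaC}, which the author obtains as a consequence of the Grothendieck-Andr\'e periods Conjecture applied to the auto-dual 1-motive \eqref{GGPC-1-motive-severalP=Q}, together with Lemma \ref{ImplicationsGA} supplying the algebraic-invariants reduction. No genuinely new transcendence input is required at this step; once those two tools are in place, the entire argument is a short specialization.
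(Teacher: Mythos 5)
Your derivation matches the paper's: the paper obtains this conjecture from Conjecture \ref{SigmaC} in exactly the way you describe, invoking Lemma \ref{ImplicationsGA} to get $\dim_\QQ < \zeta(p_i) p_i>_{i=1,n_2} = n_2$ and then collapsing the bound $2n + \dim_k <p_i>_i + n_2$ to $3n$ (with the $n_2=0$ case trivially giving $2n+n=3n$), the algebraicity of $g_2,g_3$ letting one drop them from the field. The only minor point worth keeping in mind is that in the non--CM case Lemma \ref{ImplicationsGA} itself already assumes the Grothendieck--Andr\'e periods Conjecture, but since everything here is conditional on that conjecture anyway, this is consistent with the paper's reasoning.
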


We finish furnishing some literature. This paper is the sequel of \cite{B02,BW}. In \cite{BT} the authors prove the functional analogue of the Grothendieck-André period Conjecture. In \cite{Z} Conjecture \ref{GAC} applied to 1-motives is reformulated in terms of model theory. Finally the book \cite{HW} explains in all aspects the motivic notions used in questions of transcendence involving periods of 1-motives.

I am very grateful to Yves André, Daniel Bertrand, Patrice Philippon and Michel Waldschmidt for their comments which allowed to greatly improve this text.

\section{Notation}\label{Notation}

Let $\Omega = \ZZ \omega_1 + \ZZ  \omega_2$ be a lattice in $\CC$ with elliptic invariants $g_2,g_3$. We refer to the elements of 
$\Omega$ as periods. Consider the Weierstrass  $\wp, \zeta, \sigma$ functions relative to the lattice $\Omega.$
Recall that the Weierstrass function $\wp$ is even, while the functions $\zeta, \sigma$ are odd.
The quasi--periodicity of the Weierstrass $\zeta$ function
\begin{equation}\label{zeta(z+omega)}
	\zeta(z+\omega)=\zeta(z)+\eta(\omega) \qquad 	\forall \omega\in\Omega
\end{equation}
defines  
a linear map $\eta: \Omega \to \CC, \omega \mapsto  \eta(\omega) :=\eta$. We set $\eta_i= \eta(\omega_i)$ for $i=1,2$.

According to \cite[page 205]{F}\footnote{In \cite[page 205]{F}, there is a typographical error: instead of $m_1m_2+m_1$ one should read $m_1m_2+m_1+m_2$}
and by \cite[\S 1]{Wald84} the quasi--periodicity formula and the multiplication formula of the $\sigma$ function state that if $\omega$ is a period and $m$ an integer, $ m\geqslant 1$, then  
\begin{align}
	\label{sigma(z+omega)}	\sigma (z+   \omega)&= \epsilon(\omega) \sigma(z) \rme^{ \eta (z+\frac{\omega}{2})},\\
	\label{sigma(mz)}	\sigma(mz)&= (-1)^{m-1} \sigma(z)^{m^2} \psi_m(\wp(z),\wp'(z)),
\end{align}
where $\epsilon(\omega) =1$ if $\omega \in 2 \Omega$, and $-1$ if $\omega \notin 2 \Omega,$ and $\psi_m(X,Y)$ is a polynomial in $ \QQ(g_2,g_3)[X,Y]$ (see \cite[page 185]{F} for an recursive description of the polynomials $\psi_m(X,Y)$ for any $m$). 
In particular $\psi_1(X,Y)$ is the constant polynomial 1.

For $(z_1,z_2) \in \CC\times \CC\setminus\Omega$, consider Serre function
\[
f_{z_2}(z_1) = \frac{\sigma(z_1+z_2)}{\sigma(z_1)\sigma(z_2)} \rme^{-\zeta(z_2) z_1 }.
\]
Using the quasi--periodicity relations \eqref{zeta(z+omega)} and \eqref{sigma(z+omega)} for the $\zeta$ and $\sigma$ functions, we have that 
\begin{equation}\label{f(z+omega)}
	f_{z_2}(z_1+\omega)= f_{z_2}(z_1) \rme^{\eta z_2 -\omega\zeta(z_2)}  \quad \mathrm{and} \quad f_{z_2+ \omega }(z_1)= f_{z_2}(z_1) \qquad 	\forall \omega\in\Omega.
\end{equation}
Serre function is $\Omega$--periodic with respect to the second variable $z_2$, since the extension $G$ of the elliptic curve $\cE$ by $\GG_m$ depends on the point $Q$ of $\cE^*$ and not on its elliptic logarithm $q \in \Lie \cE^*.$ Moreover we have the equality $f_{z_2}(-z_1)=-f_{-z_2}(z_1).$

There is a linear relation between the periods $\omega_1, \omega_2,\eta_1,\eta_2,2\pi\rmi $, called the {\em Legendre relation}: 
\begin{equation}\label{Equation:Legendre}
	\omega_2 \eta_1 - \omega_1 \eta_2 = 2\pi\rmi,
\end{equation}
with the $+$ sign when the imaginary part of $\omega_2/\omega_1$ is positive. If $\omega=n \omega_1+ m \omega_2$ and $\omega'=n' \omega_1+ m' \omega_2$ are two periods,
by an explicit computation we have that $\omega \eta'- \omega'\eta =(mn'-nm')(\omega_2 \eta_1- \omega_1 \eta_2). $
Using \eqref{Equation:Legendre} we get the {\em generalized Legendre relation} (to arbitrary periods):
\begin{equation}\label{lemma:MixProduct}
	\omega \eta'-  \omega' \eta \in 2 \rmi \pi \ZZ.
\end{equation}
Moreover $\omega \eta'-  \omega' \eta =0$ if and only if $mn'-nm'=0.$

Assume $\cE$ to be CM. Let $\tau=\frac{\omega_2}{\omega_1}.$ Then $k$ is the imaginary quadratic extension $k=\QQ(\tau)$ of $\QQ$ and $\tau$ is a root of a polynomial 
\begin{equation}\label{Polynomial:tau}
A+BX+CX^2 \in \ZZ[X],
\end{equation}
where $A,B,C$ are relatively prime integers with $C>0$. In particular $C \tau \Omega \subseteq \Omega$.

According to \cite[Chap. III, \S3.2, Lemma 3.1] {M75}\footnote{At the beginning of the book \cite{M75}, the author assumes the invariants $g_2,g_3$ algebraic but his Lemma 3.1 remains true even without this hypothesis. \cite[Appendix B, Th. 8]{BK} does not assume that $g_2$ and $g_3$ are algebraic.} 
and \cite[Appendix B, Th. 8]{BK}, there are two independent linear relations between the periods $\omega_1, \omega_2,\eta_1,\eta_2$, namely
\begin{equation}\label{Equation:tau}
	\omega_2-\tau\omega_1 =0
\end{equation}
and 
\begin{equation}\label{Equation:kappa}
	A\eta_1- C\tau\eta_2-\kappa\omega_2=0
\end{equation}
where $\kappa$ is algebraic over the field $\QQ(g_2,g_3)$. Hence we have

\begin{lemma}\label{OmegaEta}
	If $\cE$ has complex multiplication, the numbers $\omega_2$ and $\eta_2$ are algebraic over the field $k (g_2,g_3,\omega_1,\eta_1).$ 
\end{lemma}

\section{Values of the Weierstrass $\sigma$ function and Serre functions at torsion points}\label{ValuesAtTorsionPoints}

Some of the results of this section were announced without proof in \cite[\S 1 and 2]{Wald84}; we provide complete proofs here.

\begin{proposition}\label{Prop:SigmaTorsion}
	Let $\omega \in \Omega$ and let $ m\geqslant 2$ be an integer such that $ \frac{\omega}{m} \notin \Omega.$ Then the following identity holds:
	\[
	\Big(\sigma\Big(\frac{\omega}{m}\Big) \rme^{- \frac{\eta \omega}{ 2m^2}}\Big)^{m(m+2)}= \epsilon(\omega) (-1)^m \frac{1}{\psi_{m+1}(\wp\left(\frac{\omega}{m}\right),\wp'\left(\frac{\omega}{m})\right) } \cdot 
	\] 
	In particular the number $ \sigma\big(\frac{\omega}{m}\big) \rme^{-\frac{\eta \omega}{2 m^2}} $ is algebraic over the field $\QQ(g_2,g_3).$
\end{proposition}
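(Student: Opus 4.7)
The plan is to apply the multiplication formula \eqref{sigma(mz)} at index $m+1$ combined with the quasi--periodicity \eqref{sigma(z+omega)}, using the convenient identity $(m+1)\cdot\frac{\omega}{m}=\omega+\frac{\omega}{m}$ to bridge the two relations, then verify the exponents and exponential factors match.

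More concretely, I would set $z=\frac{\omega}{m}$ (which is non--zero and not in $\Omega$ by assumption, so $\sigma(z)\neq 0$), and write
\begin{equation*}
\sigma\Bigl((m+1)\tfrac{\omega}{m}\Bigr)=(-1)^{m}\sigma\Bigl(\tfrac{\omega}{m}\Bigr)^{(m+1)^{2}}\psi_{m+1}\Bigl(\wp\bigl(\tfrac{\omega}{m}\bigr),\wp'\bigl(\tfrac{\omega}{m}\bigr)\Bigr)
\end{equation*}
via \eqref{sigma(mz)}. On the other hand, since $(m+1)\frac{\omega}{m}=\frac{\omega}{m}+\omega$, the quasi--periodicity \eqref{sigma(z+omega)} (with $\eta=\eta(\omega)$) gives
\begin{equation*}
\sigma\Bigl(\tfrac{\omega}{m}+\omega\Bigr)=\epsilon(\omega)\,\sigma\Bigl(\tfrac{\omega}{m}\Bigr)\,\rme^{\eta\left(\frac{\omega}{m}+\frac{\omega}{2}\right)}.
\end{equation*}
Equating the two expressions and dividing by $\sigma(\omega/m)$ (permissible since this value is non--zero) yields
\begin{equation*}
\sigma\Bigl(\tfrac{\omega}{m}\Bigr)^{(m+1)^{2}-1}=\epsilon(\omega)(-1)^{m}\,\frac{\rme^{\eta\left(\frac{\omega}{m}+\frac{\omega}{2}\right)}}{\psi_{m+1}\bigl(\wp(\omega/m),\wp'(\omega/m)\bigr)},
\end{equation*}
and the identity $(m+1)^{2}-1=m(m+2)$ is what is needed. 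It remains to absorb the exponential factor into the left--hand side: multiplying both sides by $\rme^{-m(m+2)\cdot\frac{\eta\omega}{2m^{2}}}$ and noting that $m(m+2)\cdot\frac{\eta\omega}{2m^{2}}=\frac{\eta\omega}{2}+\frac{\eta\omega}{m}$, the exponential factor cancels exactly.

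For the final sentence on algebraicity, I would argue that the polynomial $\psi_{m+1}(X,Y)$ lies in $\QQ(g_{2},g_{3})[X,Y]$, and the division values $\wp(\omega/m),\wp'(\omega/m)$ are roots of the classical division polynomials and hence algebraic over $\QQ(g_{2},g_{3})$; so the right--hand side is algebraic over $\QQ(g_{2},g_{3})$, and therefore so is the $m(m+2)$-th root, i.e.\ $\sigma(\omega/m)\rme^{-\eta\omega/(2m^{2})}$ itself. One should only check that the denominator $\psi_{m+1}(\wp(\omega/m),\wp'(\omega/m))$ is non--zero, but this follows from the non--vanishing of $\sigma((m+1)\omega/m)$, which in turn holds because $(m+1)\omega/m=\omega+\omega/m\notin\Omega$ by hypothesis.

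The computation is essentially mechanical; the only subtle point — and the one that motivates the peculiar exponents $m(m+2)$ and $\eta\omega/(2m^{2})$ in the statement — is the bookkeeping that makes the exponential terms cancel. No genuine obstacle is expected beyond organizing this bookkeeping correctly.
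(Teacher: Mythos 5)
Your argument is correct and is essentially the paper's own proof: both evaluate the quasi--periodicity formula \eqref{sigma(z+omega)} at $z=\frac{\omega}{m}$ and the multiplication formula \eqref{sigma(mz)} at index $m+1$ for $z=\frac{\omega}{m}$, equate the two expressions for $\sigma\bigl((m+1)\frac{\omega}{m}\bigr)$, and absorb the exponential via $m(m+2)\frac{\eta\omega}{2m^2}=\frac{\eta\omega}{2}+\frac{\eta\omega}{m}$. Your closing remarks on the non-vanishing of $\psi_{m+1}\bigl(\wp(\frac{\omega}{m}),\wp'(\frac{\omega}{m})\bigr)$ and the algebraicity of the division values over $\QQ(g_2,g_3)$ correspond to the paper's appeal to \cite[Lemma 3.1 (3)]{BW}, so no gap remains.
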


\begin{proof}
	Substituting $z= \frac{\omega}{m}$ in \eqref{sigma(z+omega)} we have 
	\[
	\sigma \Big(\frac{\omega}{m} (1+m)\Big)= \epsilon(\omega) \sigma\Big(\frac{\omega}{m}\Big) \rme^{ \eta \omega \frac{m+2}{2m}}.
	\]
	On the other hand applying \eqref{sigma(mz)} to $z= \frac{\omega}{m}$ and to the integer $m+1$, we get
	\[
	\sigma\Big((m +1)\frac{\omega}{m}\Big) = (-1)^{m} \sigma\Big(\frac{\omega}{m}\Big)^{(m+1)^2} \psi_{m+1}\Big(\wp\left(\frac{\omega}{m}\right),\wp'\left(\frac{\omega}{m}\right)\Big).
	\]
 Combining these identities yields the first claim.
	Using \cite[Lemma 3.1 (3)]{BW} we obtain the second statement.
\end{proof}

\begin{lemma}\label{LemmaSigma}
	Let $\omega \in \Omega $ and let $ m\geqslant 2$ be an integer such that $ \frac{\omega}{m} \notin \Omega$. We have the equality
	\[
	\Big(\frac{\sigma\left(z+\frac{\omega}{m}\right)}{\sigma(z)}\Big)^{m^2}= \epsilon(\omega)  \rme^{\eta (mz+ \frac{\omega}{2})} \frac{\psi_m(\wp(z),\wp'(z))}{\psi_{m}(\wp\left(z+\frac{\omega}{m}\right),\wp'\left(z+\frac{\omega}{m})\right) } \cdot  
	\] 
	In particular $
	\frac{\sigma\left(z+\frac{\omega}{2}\right)}{\sigma(z)} \rme^{-\eta(\frac{z}{2}+\frac{\omega}{8})} 
	$
	is algebraic over the field $\QQ(g_2,g_3, \wp(z)).$
\end{lemma}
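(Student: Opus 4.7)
The plan is to compute $\sigma(mz+\omega)$ in two different ways using the already established formulas \eqref{sigma(z+omega)} and \eqref{sigma(mz)} and equate the results. Writing $mz+\omega = m(z+\omega/m)$, the multiplication formula \eqref{sigma(mz)} applied to $z+\omega/m$ gives
\[
\sigma(mz+\omega) = (-1)^{m-1}\,\sigma(z+\omega/m)^{m^2}\,\psi_m\bigl(\wp(z+\omega/m),\wp'(z+\omega/m)\bigr).
\]
Alternatively, the quasi-periodicity \eqref{sigma(z+omega)} applied with the variable $mz$ gives $\sigma(mz+\omega) = \epsilon(\omega)\,\sigma(mz)\,\rme^{\eta(mz+\omega/2)}$, and a further use of \eqref{sigma(mz)} on $\sigma(mz)$ produces
\[
\sigma(mz+\omega) = \epsilon(\omega)(-1)^{m-1}\,\sigma(z)^{m^2}\,\psi_m\bigl(\wp(z),\wp'(z)\bigr)\,\rme^{\eta(mz+\omega/2)}.
\]
Equating these two expressions for $\sigma(mz+\omega)$ and dividing both sides by $(-1)^{m-1}\sigma(z)^{m^2}\psi_m(\wp(z+\omega/m),\wp'(z+\omega/m))$ yields the claimed identity.

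For the ``In particular'' statement I would specialize to $m=2$. The recursion in \cite[page 185]{F} (equivalently, the classical identity $\sigma(2z) = -\sigma(z)^{4}\,\wp'(z)$) gives $\psi_2(X,Y)=Y$, so the main formula becomes
\[
\left(\frac{\sigma(z+\omega/2)}{\sigma(z)}\right)^{4} = \epsilon(\omega)\,\rme^{\eta(2z+\omega/2)}\,\frac{\wp'(z)}{\wp'(z+\omega/2)}.
\]
Because $4\bigl(\tfrac{z}{2}+\tfrac{\omega}{8}\bigr) = 2z+\tfrac{\omega}{2}$, I can absorb the exponential into the left-hand side and get
\[
\left(\frac{\sigma(z+\omega/2)}{\sigma(z)}\,\rme^{-\eta(z/2+\omega/8)}\right)^{4} = \epsilon(\omega)\,\frac{\wp'(z)}{\wp'(z+\omega/2)}.
\]
The right-hand side is algebraic over $\QQ(g_2,g_3,\wp(z))$: the Weierstrass differential equation forces $\wp'(z)$ to be algebraic over $\QQ(g_2,g_3,\wp(z))$, while the elliptic addition theorem, together with the fact that $\wp(\omega/2)$ is a root of $4X^{3}-g_{2}X-g_{3}$ (since $\wp'(\omega/2)=0$), shows that $\wp(z+\omega/2)$ and $\wp'(z+\omega/2)$ lie in the algebraic closure of $\QQ(g_2,g_3,\wp(z))$ as well. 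Taking a fourth root then shows that $\frac{\sigma(z+\omega/2)}{\sigma(z)}\rme^{-\eta(z/2+\omega/8)}$ is itself algebraic over $\QQ(g_2,g_3,\wp(z))$.

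There is no substantive obstacle; the argument is essentially bookkeeping. The main point of care is matching the exponent produced by applying \eqref{sigma(z+omega)} to the variable $mz$ — namely $\eta(mz+\omega/2)$ — with the exponent in the statement after the $\sigma(z)^{m^2}$-division, and confirming the base case $\psi_2(X,Y)=Y$ from the recursive description of the polynomials $\psi_m$.
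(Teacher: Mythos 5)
Your proof of the main identity is exactly the paper's argument: compute $\sigma(mz+\omega)$ once via \eqref{sigma(mz)} applied at $z+\tfrac{\omega}{m}$ and once via \eqref{sigma(z+omega)} applied at $mz$ followed by \eqref{sigma(mz)}, then divide. For the ``in particular'' part the paper simply cites \cite[Lemma 3.1 (3)]{BW}, whereas you unpack the same facts directly (specializing to $m=2$, $\psi_2(X,Y)=Y$, $\wp'(\omega/2)=0$, the differential equation and the addition theorem), which is correct and amounts to the same reasoning.
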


\begin{proof} Replacing $z$ with $z+\frac{\omega}{m}$ in \eqref{sigma(mz)} we obtain
	\[
	\sigma(mz+ \omega)= (-1)^{m-1} \sigma\Big(z+\frac{\omega}{m}\Big)^{m^2} \psi_m \Big(\wp\left(z+\frac{\omega}{m}\right),\wp'\left(z+\frac{\omega}{m}\right) \Big). 
	\]
	Now replacing $z$ with $mz$ in \eqref{sigma(z+omega)} we have
	\begin{align}
		\nonumber	\sigma (mz+   \omega)&= \epsilon(\omega) \sigma(mz) \rme^{ \eta \left(mz+\frac{\omega}{2}\right)},\\
		\nonumber    & \stackrel{\eqref{sigma(mz)}}{=} \epsilon(\omega) (-1)^{m-1} \sigma(z)^{m^2} \psi_m(\wp(z),\wp'(z)) \rme^{ \eta (mz+\frac{\omega}{2})}.
	\end{align} 
	Hence
	\[
	\sigma\Big(z+\frac{\omega}{m}\Big)^{m^2} = \epsilon(\omega)  \sigma(z)^{m^2}  \rme^{\eta \left(mz+\frac{\omega}{2}\right)} \frac{\psi_m(\wp(z),\wp'(z))}{\psi_{m}\left(\wp\left(z+\frac{\omega}{m}\right),\wp'\left(z+\frac{\omega}{m}\right)\right) }\cdotp
	\]
	The final assertion follows from \cite[Lemma 3.1 (3)]{BW}.
\end{proof}

\begin{proposition}\label{Cor:f_q(p)pTorsion}
	Let $z \in \CC \setminus \Omega.$ Let $\omega \in \Omega $ and $m$ an integer, $ m\geqslant 2$, such that $ \frac{\omega}{m} \notin \Omega$.
	The function
	\[
	f_{z}\Big(\frac{\omega}{m}\Big) \rme^{\frac{1}{m} (\omega \zeta(z)- \eta z)} 
	\]
	belongs to the field $\overline{\QQ(g_2,g_3)}(\wp(z),\wp'(z)).$
	% In particular, if $ \frac{\omega}{2} $ is not a period, then $f_{\frac{\omega}{2}}(z) \in \overline{\QQ(g_2,g_3)}(\wp(z),\wp'(z)). $
\end{proposition}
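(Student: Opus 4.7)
The plan is to unwind the definition of Serre function so that the prescribed exponential factor cancels every occurrence of $\zeta(z)$, and then to raise the resulting expression to the power $m^2$ so that Lemma~\ref{LemmaSigma} and Proposition~\ref{Prop:SigmaTorsion} both apply.

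Substituting $z_1=\omega/m$, $z_2=z$ in the definition of $f_z(\omega/m)$, the factor $\rme^{-\zeta(z)\omega/m}$ appearing in Serre function is exactly killed by the corresponding piece of $\rme^{(1/m)(\omega\zeta(z)-\eta z)}$, so that
\[
f_z\Big(\frac{\omega}{m}\Big)\,\rme^{\frac{1}{m}(\omega\zeta(z)-\eta z)}=\frac{\sigma(z+\omega/m)}{\sigma(z)\,\sigma(\omega/m)}\,\rme^{-\eta z/m}.
\]
Raising to the $m^2$-th power and applying Lemma~\ref{LemmaSigma} to $(\sigma(z+\omega/m)/\sigma(z))^{m^2}$, the exponential $\rme^{\eta(mz+\omega/2)}$ produced by that lemma absorbs $(\rme^{-\eta z/m})^{m^2}=\rme^{-m\eta z}$, and what is left is a constant multiple of $\psi_m(\wp(z),\wp'(z))/\psi_m(\wp(z+\omega/m),\wp'(z+\omega/m))$, the constant being $\epsilon(\omega)\rme^{\eta\omega/2}/\sigma(\omega/m)^{m^2}$.

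To finish, Proposition~\ref{Prop:SigmaTorsion} asserts that $\sigma(\omega/m)\rme^{-\eta\omega/(2m^2)}$ is algebraic over $\QQ(g_2,g_3)$, so its $m^2$-th power $\sigma(\omega/m)^{m^2}\rme^{-\eta\omega/2}$ is too, and therefore $\epsilon(\omega)\rme^{\eta\omega/2}/\sigma(\omega/m)^{m^2}$ lies in $\overline{\QQ(g_2,g_3)}$. Next, $\wp(\omega/m)$ and $\wp'(\omega/m)$ are algebraic over $\QQ(g_2,g_3)$ by \cite[Lemma 3.1(3)]{BW}, so the $\wp$-addition formulas place both $\wp(z+\omega/m)$ and $\wp'(z+\omega/m)$ inside $\overline{\QQ(g_2,g_3)}(\wp(z),\wp'(z))$. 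Assembling everything, the $m^2$-th power of the function under study lies in $\overline{\QQ(g_2,g_3)}(\wp(z),\wp'(z))$, and hence the function itself is algebraic over this field, in the sense required by the statement.

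The main obstacle is the exponential bookkeeping: three distinct exponential factors have to mesh with each other, namely $\rme^{-\eta z/m}$ raised to $m^2$, the $\rme^{\eta(mz+\omega/2)}$ produced by Lemma~\ref{LemmaSigma}, and the $\rme^{-\eta\omega/(2m^2)}$ entering Proposition~\ref{Prop:SigmaTorsion}. The key observation, which dictates the precise form of the prescribed exponent, is that the coefficient $1/m$ is tuned so that the $\zeta(z)$-term of Serre function cancels exactly and so that, after raising to the $m^2$-th power, the residual $\eta$-dependence collapses to the single constant $\rme^{\eta\omega/2}$ that is handled by Proposition~\ref{Prop:SigmaTorsion}.
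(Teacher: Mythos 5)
Your computation is correct and follows essentially the same route as the paper's proof: substitute the definition of Serre's function so that the prescribed exponential kills the $\zeta(z)$-term, raise to a power, and invoke Lemma \ref{LemmaSigma} together with Proposition \ref{Prop:SigmaTorsion}. Your version is in fact a mild streamlining: by cancelling $\zeta(z)$ first you can work with the exponent $m^2$ alone, and the identity you reach,
\[
\Bigl(f_z\bigl(\tfrac{\omega}{m}\bigr)\,\rme^{\frac1m(\omega\zeta(z)-\eta z)}\Bigr)^{m^2}
=\frac{\epsilon(\omega)\,\rme^{\eta\omega/2}}{\sigma(\omega/m)^{m^2}}\cdot
\frac{\psi_m(\wp(z),\wp'(z))}{\psi_m\bigl(\wp(z+\tfrac{\omega}{m}),\wp'(z+\tfrac{\omega}{m})\bigr)},
\]
is the $(m+2)$-th root of the formula the paper obtains with the exponent $m^2(m+2)$; your handling of the constant through the $m^2$-th power of the quantity in Proposition \ref{Prop:SigmaTorsion} is fine, as is the use of the addition theorem to place $\wp(z+\tfrac{\omega}{m}),\wp'(z+\tfrac{\omega}{m})$ in $\overline{\QQ(g_2,g_3)}(\wp(z),\wp'(z))$.

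The divergence is in the last sentence, and it is a genuine gap. The Proposition asserts that the function \emph{belongs to} $\overline{\QQ(g_2,g_3)}(\wp(z),\wp'(z))$, whereas you conclude only that it is \emph{algebraic over} this field, because what you have placed in the field is its $m^2$-th power. These are not the same, and one cannot pass from "a power is elliptic" to "the function is elliptic": writing $g(z)=\sigma(z+\tfrac{\omega}{m})\,\rme^{-\eta z/m}/\bigl(\sigma(z)\sigma(\tfrac{\omega}{m})\bigr)$, the quasi-periodicity formulas give $g(z+\omega')=g(z)\,\rme^{(\omega\eta(\omega')-\omega'\eta(\omega))/m}$, and by the generalized Legendre relation \eqref{lemma:MixProduct} the exponent lies in $\tfrac{2\pi\rmi}{m}\ZZ$ but not always in $2\pi\rmi\ZZ$ when $\tfrac{\omega}{m}\notin\Omega$, so $g$ is only elliptic up to $m$-th roots of unity. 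Concretely, for $m=2$, $\omega=\omega_1$ one has $g^2=\wp-e_1$ with $e_1=\wp(\omega_1/2)$, hence $g=\pm\sqrt{\wp-e_1}$, which does not lie in $\CC(\wp,\wp')$ (an elliptic function of order one would result). So your argument proves the weaker statement that a specified power of $f_z(\tfrac{\omega}{m})\,\rme^{\frac1m(\omega\zeta(z)-\eta z)}$ lies in $\overline{\QQ(g_2,g_3)}(\wp(z),\wp'(z))$, equivalently that this function is algebraic over that field; the literal membership does not follow. To be fair, the paper's own proof makes exactly the same unjustified jump ("we deduce that the function is an elliptic function") from the ellipticity of the $m^2(m+2)$-th power, so your proof is on a par with the paper's; and for the subsequent applications (e.g. Corollary \ref{Cor:f_q(p)qpTorsion}, where only algebraicity of values over $\overline{\QQ(g_2,g_3)}$ is used) the weaker, honestly stated conclusion you reach is all that is actually needed.
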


\begin{proof} 
	Using Lemma \ref{LemmaSigma} and Lemma \ref{Prop:SigmaTorsion}, we obtain 
	\[
	f_{z}\Big(\frac{\omega}{m}\Big)^{m^2(m+2)}= \frac{\sigma ( \frac{\omega}{m}+z)^{m^2(m+2)}}{\sigma(\frac{\omega}{m})^{m^2(m+2)} \sigma (z)^{m^2(m+2)} } \rme^{-(m+2){m^2}\zeta(z)\frac{\omega}{m}} 
	\]
	\begin{align}
		\nonumber		&=\Big(	\epsilon(\omega)  \rme^{\eta (mz+ \frac{\omega}{2})} \frac{\psi_m(\wp(z),\wp'(z))}{\psi_{m}(\wp(z+\frac{\omega}{m}),\wp'(z+\frac{\omega}{m})) }	\Big)^{m+2}  \cdot  
		\\
		\nonumber	&  \qquad \qquad \qquad  \cdot \Big(
		\epsilon(\omega) (-1)^m \frac{1}{\psi_{m+1}(\wp(\frac{\omega}{m}),\wp'(\frac{\omega}{m})) }  \rme^{ m(m+2)\frac{\eta \omega}{ 2m^2}} \Big)^{-m}  \rme^{-(m+2){m^2}\zeta(z)\frac{\omega}{m}}	
		\\
		\nonumber	& = (-1)^m  \frac{\psi_m(\wp(z),\wp'(z))^{m+2} \psi_{m+1}(\wp(\frac{\omega}{m}),\wp'(\frac{\omega}{m}))^{m}  }{
			\psi_{m}\left(\wp\left(z+\frac{\omega}{m}\right),\wp'\left(z+\frac{\omega}{m}\right)\right)^{m+2} } \cdot  \\
		\nonumber	&  \qquad \qquad \qquad  \cdot \rme^{ (m+2) \eta (mz+ \frac{\omega}{2})}  \rme^{ -(m+2)\frac{\eta \omega}{ 2}}  \rme^{-(m+2){m^2}\zeta(z)\frac{\omega}{m}}	\\
		\nonumber	& = (-1)^m  \frac{\psi_m(\wp(z),\wp'(z))^{m+2} \psi_{m+1}\left(\wp\left(\frac{\omega}{m}\right),\wp'\left(\frac{\omega}{m}\right)\right)^{m}  }{\psi_{m}(\wp(z+\frac{\omega}{m}),\wp'(z+\frac{\omega}{m}))^{m+2} } \rme^{- m^2 (m+2) (\frac{\omega}{m} \zeta(z)- \frac{\eta}{m}z) }.
	\end{align}
	From these equalities we deduce that the function $f_{\frac{\omega}{m}}(z) \rme^{\frac{1}{m} ( \omega\zeta(z)- \eta z)}$ is elliptic with respect to the lattice $\Omega$, and hence belongs to $\CC(\wp(z),\wp'(z))$. Moreover its coefficients are algebraic over $\QQ(g_2,g_3)$ by \cite[Lemma 3.1 (3)]{BW}. Hence we conclude. 
\end{proof}

\begin{corollary}\label{Cor:f_q(p)qpTorsion}
	Let $\omega$ and $\omega'$ be in $\Omega$ and $m,l$ be two integers, $ m,l\geqslant 2$, such that $ \frac{\omega}{m}, \frac{\omega'}{l} \notin \Omega.$ Then the number
	\[
	f_{\frac{\omega'}{l}}\Big(\frac{\omega}{m}\Big) \rme^{\frac{1}{m} ( \omega \zeta(\frac{\omega'}{l})- \eta\frac{\omega'}{l})} 
	\]
	belongs to the field $\overline{\QQ(g_2,g_3)}$. In particular, if $ \frac{\omega'}{2} $ is not a period, then $	f_{\frac{\omega'}{2}}\big(\frac{\omega}{m}\big)  \in \overline{\QQ(g_2,g_3)}. $
\end{corollary}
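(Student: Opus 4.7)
The natural plan is to deduce the corollary from Proposition \ref{Cor:f_q(p)pTorsion} by specializing the variable $z$ to the torsion point $\omega'/l$. Applying that proposition with $z=\omega'/l$ (which is allowed since $\omega'/l \notin \Omega$ by hypothesis), we obtain that
\[
f_{\omega'/l}\Big(\frac{\omega}{m}\Big)\,\rme^{\frac{1}{m}(\omega\zeta(\omega'/l)-\eta\,\omega'/l)}
\]
lies in $\overline{\QQ(g_2,g_3)}\bigl(\wp(\omega'/l),\wp'(\omega'/l)\bigr)$. Thus it suffices to observe that the coordinates $\wp(\omega'/l)$ and $\wp'(\omega'/l)$ of the $l$-torsion point $\exp_\cE(\omega'/l)$ of $\cE$ are themselves algebraic over $\QQ(g_2,g_3)$. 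This is the standard fact that torsion points of $\cE$ are defined over $\overline{\QQ(g_2,g_3)}$, and is exactly the content used previously in the excerpt via \cite[Lemma 3.1]{BW}. This yields the first claim.

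For the second claim, where $l=2$, the point is that the exponential factor becomes algebraic, so the ``bare'' value $f_{\omega'/2}(\omega/m)$ is already algebraic. To see this, I would first show $\zeta(\omega'/2)=\tfrac12\eta(\omega')$: since $\zeta$ is odd and satisfies the quasi-periodicity \eqref{zeta(z+omega)}, setting $z=-\omega'/2$ and $\omega\leftarrow\omega'$ gives
\[
\zeta(\omega'/2)=\zeta(-\omega'/2)+\eta(\omega')=-\zeta(\omega'/2)+\eta(\omega'),
\]
whence $2\zeta(\omega'/2)=\eta(\omega')$. Substituting,
\[
\omega\,\zeta(\omega'/2)-\eta(\omega)\,\tfrac{\omega'}{2}=\tfrac12\bigl(\omega\,\eta(\omega')-\eta(\omega)\,\omega'\bigr),
\]
which, by the generalized Legendre relation \eqref{lemma:MixProduct}, lies in $\pi\rmi\,\ZZ$. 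Therefore
\[
\rme^{\frac{1}{m}(\omega\zeta(\omega'/2)-\eta\,\omega'/2)}
\]
is a $2m$-th root of unity, in particular algebraic over $\QQ$, and combining with the first part gives $f_{\omega'/2}(\omega/m)\in \overline{\QQ(g_2,g_3)}$.

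There is no serious obstacle here — the corollary is essentially a specialization plus bookkeeping. The only mildly delicate point is the computation of $\zeta$ at a half-period, which requires knowing to combine the oddness of $\zeta$ with its quasi-periodicity; once this identity is in hand, the generalized Legendre relation forces the exponent to lie in $\pi\rmi\,\ZZ$ and concludes the argument.
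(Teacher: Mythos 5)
Your proposal is correct and follows essentially the same route as the paper: specialize Proposition \ref{Cor:f_q(p)pTorsion} at $z=\omega'/l$, invoke the algebraicity of $\wp(\omega'/l)$ (and hence of $\wp'(\omega'/l)$, via the Weierstrass equation) over $\QQ(g_2,g_3)$ from \cite[Lemma 3.1 (3)]{BW}, and treat the case $l=2$ via the generalized Legendre relation \eqref{lemma:MixProduct}. The only difference is that you spell out the half-period identity $\zeta(\omega'/2)=\eta(\omega')/2$ and the resulting root-of-unity computation, which the paper leaves implicit in its hint; this detail is accurate.
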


\begin{proof} By \cite[Lemma 3.1 (3)]{BW} the number $\wp(\frac{\omega'}{l})$ is algebraic over the field $\QQ(g_2,g_3)$. Therefore Proposition \ref{Cor:f_q(p)pTorsion}  yields the expected result. For the last statement use the generalized Legendre relation \eqref{lemma:MixProduct}.
\end{proof}

Now we state without proof dual results to Proposition \ref{Cor:f_q(p)pTorsion} and Corollary \ref{Cor:f_q(p)qpTorsion} respectively.

\begin{proposition}\label{Cor:f_q(z)qTorsion}
	Let $\omega \in \Omega $ and $m$ an integer, $ m\geqslant 2$, such that $ \frac{\omega}{m} \notin \Omega$.
	The function 
	\[
	f_{\frac{\omega}{m}}(z) \rme^{(\zeta(\frac{\omega}{m})-\frac{\eta}{m})z} 
	\]
	belongs to the field $\overline{\QQ(g_2,g_3)}(\wp(z),\wp'(z))$. In particular, if $ \frac{\omega}{2} $ is not a period, then $f_{\frac{\omega}{2}}(z) \in \overline{\QQ(g_2,g_3)}(\wp(z),\wp'(z)). $
\end{proposition}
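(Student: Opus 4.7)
The plan is to deduce this proposition directly from the already-established Proposition \ref{Cor:f_q(p)pTorsion} via the following elementary symmetry of Serre function. From the very definition of $f_{z_2}(z_1)$, swapping the roles of $z_1$ and $z_2$ leaves the $\sigma$-ratio $\sigma(z_1+z_2)/(\sigma(z_1)\sigma(z_2))$ invariant but exchanges the exponential factors, giving the identity
\[
f_{z_2}(z_1) = f_{z_1}(z_2)\, \rme^{\zeta(z_1)z_2 - \zeta(z_2)z_1}.
\]
Applying this with $z_1=z$ and $z_2=\frac{\omega}{m}$ yields
\[
f_{\frac{\omega}{m}}(z) = f_{z}\!\Big(\frac{\omega}{m}\Big)\, \rme^{\frac{\omega}{m}\zeta(z) - \zeta(\frac{\omega}{m})z}.
\]

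Next I would invoke Proposition \ref{Cor:f_q(p)pTorsion}, which asserts that the function $F(z) := f_{z}\!\big(\frac{\omega}{m}\big)\, \rme^{\frac{1}{m}(\omega\zeta(z)-\eta z)}$ lies in $\overline{\QQ(g_2,g_3)}(\wp(z),\wp'(z))$. Substituting $f_{z}(\frac{\omega}{m}) = F(z)\, \rme^{-\frac{\omega\zeta(z)}{m}+\frac{\eta z}{m}}$ into the previous display, the $\frac{\omega}{m}\zeta(z)$ terms cancel and one is left with
\[
f_{\frac{\omega}{m}}(z) = F(z)\, \rme^{\frac{\eta}{m}z - \zeta(\frac{\omega}{m})z} = F(z)\, \rme^{-(\zeta(\frac{\omega}{m})-\frac{\eta}{m})z},
\]
which rearranges to the claimed membership $f_{\frac{\omega}{m}}(z)\, \rme^{(\zeta(\frac{\omega}{m})-\frac{\eta}{m})z} = F(z) \in \overline{\QQ(g_2,g_3)}(\wp(z),\wp'(z))$.

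For the ``in particular'' statement with $m=2$, I would observe that the exponential correction factor is actually trivial. Indeed, using the quasi--periodicity \eqref{zeta(z+omega)} with $z=-\omega/2$ together with the fact that $\zeta$ is odd gives
\[
\zeta\!\Big(\frac{\omega}{2}\Big) = -\zeta\!\Big(\frac{\omega}{2}\Big) + \eta, \qquad \text{hence} \qquad \zeta\!\Big(\frac{\omega}{2}\Big) = \frac{\eta}{2}.
\]
Therefore $\rme^{(\zeta(\omega/2)-\eta/2)z}=1$ and the conclusion $f_{\omega/2}(z)\in \overline{\QQ(g_2,g_3)}(\wp(z),\wp'(z))$ is immediate.

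No step in this plan presents a genuine obstacle, since the heavy lifting (controlling the $\sigma$-quotient through the division polynomials $\psi_m$) has already been carried out in Lemma \ref{LemmaSigma} and Proposition \ref{Prop:SigmaTorsion} and packaged into Proposition \ref{Cor:f_q(p)pTorsion}. The only point that requires a minimum of care is bookkeeping of the exponential factors when combining the two displays, which is why writing out the $\frac{\omega}{m}\zeta(z)$ cancellation explicitly seems worthwhile. An alternative, more self-contained route would be to mimic the computation of Proposition \ref{Cor:f_q(p)pTorsion} step by step, raising $f_{\frac{\omega}{m}}(z)$ to the power $m^2(m+2)$ and applying Lemma \ref{LemmaSigma} and Proposition \ref{Prop:SigmaTorsion}; this is the ``dual'' computation referred to in the section introduction, but it produces essentially the same identity at the cost of more algebra.
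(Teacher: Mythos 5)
Your proposal is correct, but it follows a genuinely different route from the paper. The paper proves Proposition \ref{Cor:f_q(z)qTorsion} by a direct computation: it raises $f_{\frac{\omega}{m}}(z)$ to the power $m^2(m+2)$, applies Lemma \ref{LemmaSigma} and Proposition \ref{Prop:SigmaTorsion}, and concludes that the resulting expression is an elliptic function whose coefficients are algebraic over $\QQ(g_2,g_3)$ — essentially repeating, with the roles of the two variables exchanged, the same division-polynomial algebra already carried out for Proposition \ref{Cor:f_q(p)pTorsion}. You instead deduce the statement from Proposition \ref{Cor:f_q(p)pTorsion} via the exact symmetry $f_{z_2}(z_1)=f_{z_1}(z_2)\,\rme^{\zeta(z_1)z_2-\zeta(z_2)z_1}$, which follows immediately from the definition of Serre's function; your bookkeeping of the exponential factors is right, the cancellation of the $\frac{\omega}{m}\zeta(z)$ terms does produce $f_{\frac{\omega}{m}}(z)\,\rme^{(\zeta(\frac{\omega}{m})-\frac{\eta}{m})z}=F(z)$, and your verification that $\zeta(\frac{\omega}{2})=\frac{\eta}{2}$ (via quasi-periodicity and oddness of $\zeta$) correctly disposes of the ``in particular'' case. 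Your reduction is shorter, makes the duality between the two propositions (which the paper only remarks upon) into an actual proof mechanism, and avoids duplicating the $\psi_m$ computation; the paper's direct argument is self-contained and yields an explicit formula for the $m^2(m+2)$-th power in terms of the division polynomials, which could matter for effective versions, but logically both arguments rest on the same two lemmas, so nothing is lost in your approach.
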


\begin{corollary}\label{Cor:f_q(p)pqTorsion}
	Let $\omega$ and $\omega'$ be in $\Omega$ and $m,l$ be two integers, $ m,l\geqslant 2$, such that $ \frac{\omega}{m}, \frac{\omega'}{l} \notin \Omega.$ Then the number
	\[
	f_{\frac{\omega}{m}}\Big(\frac{\omega'}{l}\Big) \rme^{(\zeta(\frac{\omega}{m})-\frac{\eta}{m})\frac{\omega'}{l}} 
	\]
	belongs to the field $\overline{\QQ(g_2,g_3)}$. In particular, if $ \frac{\omega}{2} $ is not a period, then $	f_{\frac{\omega}{2}}\big(\frac{\omega'}{l}\big) \in \overline{\QQ(g_2,g_3)}. $
\end{corollary}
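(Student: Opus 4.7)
The plan is to read this off Proposition \ref{Cor:f_q(z)qTorsion} by evaluating the two-variable identity at a torsion point in the first variable. The proposition shows that, as a function of $z$, the expression
\[
F(z):=f_{\frac{\omega}{m}}(z)\,\rme^{(\zeta(\frac{\omega}{m})-\frac{\eta}{m})z}
\]
lies in $\overline{\QQ(g_2,g_3)}\bigl(\wp(z),\wp'(z)\bigr)$. The assumption $\frac{\omega'}{l}\notin\Omega$ ensures that $z=\frac{\omega'}{l}$ is not a pole of $\wp$ or $\wp'$, so the evaluation $F\bigl(\frac{\omega'}{l}\bigr)$ makes sense.

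Next I would invoke \cite[Lemma~3.1 (3)]{BW} to note that $\wp\bigl(\frac{\omega'}{l}\bigr)$ and $\wp'\bigl(\frac{\omega'}{l}\bigr)$, being values of Weierstrass $\wp$ and $\wp'$ at a nontrivial torsion point, are algebraic over $\QQ(g_2,g_3)$. Substituting these algebraic numbers into a rational function with coefficients algebraic over $\QQ(g_2,g_3)$ then yields an element of $\overline{\QQ(g_2,g_3)}$, which is exactly the first claim. There is no real obstacle here: it is just the specialization of an identity already established in the two-variable form.

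For the ``in particular'' statement with $m=2$, the task is to check that the exponential prefactor collapses to $1$. Writing $\eta=\eta(\omega)$ for the quasi-period attached to $\omega$, the oddness $\zeta(-z)=-\zeta(z)$ together with the quasi-periodicity \eqref{zeta(z+omega)} applied at $z=-\frac{\omega}{2}$ gives
\[
\zeta\Bigl(\frac{\omega}{2}\Bigr)=\zeta\Bigl(-\frac{\omega}{2}+\omega\Bigr)=-\zeta\Bigl(\frac{\omega}{2}\Bigr)+\eta(\omega),
\]
hence $\zeta(\frac{\omega}{2})=\frac{\eta(\omega)}{2}$. Thus for $m=2$ the exponent $(\zeta(\frac{\omega}{2})-\frac{\eta}{2})\frac{\omega'}{l}$ vanishes, and $f_{\frac{\omega}{2}}\bigl(\frac{\omega'}{l}\bigr)=F\bigl(\frac{\omega'}{l}\bigr)\in\overline{\QQ(g_2,g_3)}$ by the first part.

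The only mild delicacy, and the one step I would double-check, is the bookkeeping of the quasi-period symbol $\eta$ (which in the body of the paper is used both as the linear functional $\omega\mapsto\eta(\omega)$ on $\Omega$ and as shorthand for its value at the period in context); once this is fixed, the corollary is immediate from Proposition \ref{Cor:f_q(z)qTorsion} and the algebraicity of $\wp$-values at torsion points.
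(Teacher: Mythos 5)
Your proof is correct and is essentially the paper's argument: the corollary is obtained by specializing Proposition \ref{Cor:f_q(z)qTorsion} at $z=\frac{\omega'}{l}$ and invoking \cite[Lemma 3.1 (3)]{BW} for the algebraicity of $\wp\bigl(\frac{\omega'}{l}\bigr)$ (and hence $\wp'\bigl(\frac{\omega'}{l}\bigr)$) over $\QQ(g_2,g_3)$. Your additional check that $\zeta\bigl(\frac{\omega}{2}\bigr)=\frac{\eta(\omega)}{2}$, so the exponential factor is $1$ when $m=2$, is exactly the observation underlying the ``in particular'' clause, which the paper leaves implicit.
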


%\begin{proof} By \cite[Lemma 3.1 (3)]{BW} the number $\wp(\frac{\omega'}{l})$ is algebraic over the field $\QQ(g_2,g_3)$. Therefore %Proposition \ref{Cor:f_q(z)qTorsion}  yields the expected result.
%\end{proof}

%Proposition \ref{Cor:f_q(p)pTorsion} and Corollary \ref{Cor:f_q(p)qpTorsion} are dual results to Proposition \ref{Cor:f_q(z)qTorsion} %and Corollary \ref{Cor:f_q(p)pqTorsion} respectively: this reflects the dual nature of 1-motives.

	For the convenience of the reader we summarize the values of the Weierstrass $\wp, \zeta, \sigma$ functions and of Serre functions at division and torsion points computed in 
	\cite[\S 3]{BW} and in this section. 
Let $p  \in \CC \setminus \Omega$, $m$ be an integer, $m \geqslant 1 ,$ and $\omega$ be a period, such that $ \frac{\omega}{m} \notin \Omega.$
\medskip
\medskip

\begin{tabular}{ |p{15cm}|  }  
	\hline
	\multicolumn{1}{|c|}{} \\
	\multicolumn{1}{|c|}{\textbf{Values at torsion and division points}} \\
	\multicolumn{1}{|c|}{} \\
	\hline \rule[-4mm]{0cm}{1cm}
	$\wp(p/m) \in \overline{\QQ(g_2,g_3,\wp(p))} $ \\
	\hline \rule[-4mm]{0cm}{1cm}
	$\zeta(p/m) -\zeta(p)/m \in \overline{\QQ(g_2,g_3,\wp(p))} $ \\
	\hline \rule[-4mm]{0cm}{1cm}
 $\wp(\omega/m)
 \in \overline{\QQ(g_2,g_3)}  $ \\ 
 	\hline \rule[-4mm]{0cm}{1cm}
 $\zeta(\omega/m)-\eta(\omega)/m
 \in \overline{\QQ(g_2,g_3)}  $  \\ 
 	\hline \rule[-4mm]{0cm}{1cm}
 $ \sigma (\omega/m) \rme^{-\eta(\omega) \omega /2 m^2}  \in \overline{\QQ(g_2,g_3)} $  \\ 
 \hline \rule[-4mm]{0cm}{1cm}
 $	f_{z}(\omega /m) \rme^{(\omega \zeta(z)- \eta(\omega) z)/m}  \in \overline{\QQ(g_2,g_3)}(\wp(z),\wp'(z))$   \\ 
 	\hline \rule[-4mm]{0cm}{1cm}
 $	f_{\omega/m}(z) \rme^{(\zeta(\omega /m)-\eta(\omega) /m)z} \in \overline{\QQ(g_2,g_3)}(\wp(z),\wp'(z))$  \\
	\hline
\end{tabular}
\begin{center}
	{\bf Table 0}
\end{center}

%-----------------------------------------
\section{Addition and multiplication formulas for Serre functions}\label{Formulae}

In this section, we establish addition and multiplication formulas for Serre functions.

\medskip
\medskip

\begin{tabular}{ |p{15cm}|  }  
	\hline
	\multicolumn{1}{|c|}{} \\
	\multicolumn{1}{|c|}{\textbf{Addition Formulas}} \\
	\multicolumn{1}{|c|}{} \\
	\hline \rule[-4mm]{0cm}{1cm}
	$	\frac{f_q(z_1+ z_2)}{f_q(z_1)f_q( z_2)}= \frac{\sigma(z_1+z_2+q) \sigma(z_1)\sigma(z_2)\sigma(q)}{\sigma(z_1+z_2) \sigma(z_1+q)\sigma(z_2+q)} \in \QQ(g_2,g_3,\wp(q),\wp(z_1),\wp(z_2),\wp'(q),\wp'(z_1),\wp'(z_2))$ \\
	\hline \rule[-4mm]{0cm}{1cm}
	$	\frac{f_{z_1+z_2}(p)}{f_{z_1}(p)f_{z_2}( p)}= \frac{\sigma(p+z_1+z_2) \sigma(z_1)\sigma(z_2) \sigma(p)}{\sigma(z_1+z_2)\sigma(p+z_1)\sigma(p+z_2)} \rme^{- \frac{p}{2} \frac{\wp'(z_1)-\wp'(z_2)}{\wp(z_1)-\wp(z_2)}}$ \\
	\hline
\end{tabular}
\begin{center}
	{\bf Table 1}
\end{center}

\medskip
\medskip

\begin{tabular}{ |p{15cm}|  }
	\hline
	\multicolumn{1}{|c|}{} \\
	\multicolumn{1}{|c|}{\textbf{ Multiplication by $\frac{m}{n}$ formulae, with $m,n$ integers, $m,n \not=0$}} \\
	\multicolumn{1}{|c|}{} \\
	\hline \rule[-4mm]{0cm}{1cm}
	$	f_q \big(\frac{mz}{n}\big)^{n^2}  = f_{\frac{n}{m}q}(z)^{m^2}  \; \frac{f_{m,n}(z+\frac{nq}{m}) f_{n,m}(q)}{f_{m,n}(z)} \; \rme^{ \frac{f'_{n,m}(q)}{f_{n,m}(q)} \frac{mz}{n}}$ \\
	\hline \rule[-4mm]{0cm}{1cm}
	$	f_q \big(\frac{mz}{n}\big)^{n}  = f_{q} (z)^m  \; \frac{C_m(\wp(\frac{z}{n}), \wp'(\frac{z}{n}), \wp(q), \wp'(q))^n}{D_m(\wp(\frac{z}{n}), \wp'(\frac{z}{n}), \wp(q), \wp'(q))^n} \; \frac{D_n(\wp(\frac{z}{n}), \wp'(\frac{z}{n}), \wp(q), \wp'(q))^m}{C_n(\wp(\frac{z}{n}), \wp'(\frac{z}{n}), \wp(q), \wp'(q))^m} \quad (\mathrm{here}\; m,n \geqslant 1)$ \\
	\hline
\end{tabular}
\begin{center}
	{\bf Table 2}
\end{center}

\medskip
\medskip

\par\noindent where $f_{n,m}(z)$ is a rational function in $\QQ(g_2,g_3,\wp(\frac{z}{m}), \wp'(\frac{z}{m})), C_n(X,Y,Z,T)$ is a rational function in $ \QQ(g_2,g_3,X,Y,Z,T)$ and $ D_n(X,Y,Z,T) $ is a polynomial in  $ \QQ(g_2,g_3)[X,Y,Z,T]$.
In particular $C_1(X,Y,Z,T) =  D_1(X,Y,Z,T) =X-Z.$

\medskip
\medskip

\begin{tabular}{ |p{15cm}|  }
	\hline
	\multicolumn{1}{|c|}{} \\
	\multicolumn{1}{|c|}{\textbf{ Multiplication by $\tau=\frac{\omega_2}{\omega_1}$ formula}} \\
	\multicolumn{1}{|c|}{(only in the case of complex multiplication)} \\
	\multicolumn{1}{|c|}{} \\
	\hline \rule[-4mm]{0cm}{1cm}
	$	f_q (C \tau z)^2 = f_{\frac{q}{C \tau}}(z)^{2AC} \sigma(\frac{q}{C \tau})^{2AC} \;  \frac{Q(\wp(z+\frac{q}{C \tau})) }{Q(\wp(z)) \sigma(q)^2 } \;  \rme^{-\frac{\kappa q^2}{C \tau}} \;  \rme^{ - 2 z[\zeta(q) C \tau + \kappa q- \zeta(\frac{q}{C \tau}) AC]} $ \\
	\hline 
\end{tabular}

\begin{center}
	{\bf Table 3}
\end{center}

\medskip
\medskip
\par\noindent with $Q(X) \in \QQ(g_2,g_3,\tau)[X].$

\medspace

\textit{Proof of the formulae in Table 1}
The first identity follows directly from the definition of the Serre function. 
For the second equality use the addition formula for the $\zeta$ function \cite[Table 1 \S 3]{BW}

\medspace

\textit{Proof of the formulae in Table 2}
Applying the multiplication by an integer formula for the $\sigma$ function twice \cite[Table 2 \S 3]{BW} we obtain
\begin{align*}
	f_q\Big( \frac{mz}{n}\Big)^{n^2} &= \frac{\sigma(\frac{m}{n}(z+\frac{nq}{m}))^{n^2}}{\sigma(\frac{mz}{n})^{n^2} \sigma(q)^{n^2}}  \; \rme^{ -\zeta(q) mnz}\\
	&=  \frac{\sigma(z+\frac{nq}{m})^{m^2} f_{m,n}(z+\frac{nq}{m})}{\sigma(z)^{m^2} f_{m,n}(z) \sigma(q)^{n^2}} \; \rme^{ -\zeta(q) mnz}.
\end{align*}
From the multiplication by an integer formula for the $\zeta$ function \cite[Table 2 \S 3]{BW}  we get
\[ \zeta \Big(  \frac{nq}{m}\Big)  m^2 z=   \zeta(q) nmz + \frac{f'_{n,m}(q)}{nf_{n,m}(q)} mz\]
\[ \rme^{-   \zeta(q)nmz } = \rme^{-  \zeta (  \frac{nq}{m})m^2 z} \; \rme^{ \frac{f'_{n,m}(q)}{nf_{n,m}(q)}mz } .\]
The multiplication by an integer formula for the $\sigma$ function gives $\sigma(q)^{n^2} = \sigma (\frac{nq}{m})^{m^2} \frac{1}{f_{n,m}(q)}$  and so we can conclude
\begin{align*}
	f_q\Big( \frac{mz}{n}\Big)^{n^2} &= \frac{\sigma(z+\frac{nq}{m})^{m^2} f_{m,n}(z+\frac{nq}{m})}{\sigma(z)^{m^2} f_{m,n}(z) } 
	\;	\frac{f_{n,m}(q)}{\sigma (\frac{nq}{m})^{m^2}} \;  \rme^{- \zeta (  \frac{nq}{m})  m^2 z} \; \rme^{ \frac{f'_{n,m}(q)}{f_{n,m}(q)} \frac{mz}{n}} \\
	&= 	f_{\frac{n}{m}q} (z)^{m^2} \; \frac{ f_{m,n}(z+\frac{nq}{m}) f_{n,m}(q) }{ f_{m,n}(z)} \; \rme^{ \frac{f'_{n,m}(q)}{f_{n,m}(q)}\frac{mz}{n}}.
\end{align*}

For the last equality involving Serre function, using the addition formula for the $\sigma$ function \cite[Table 1 \S 3]{BW} and  the equality \eqref{sigma(mz)}, we have that for any integer $m$, $ m\geqslant 1,$
\begin{equation}\label{eq:f_q(rationelz)}
	( \wp(z) - \wp(q))^m  \Psi_m ( \wp(z), \wp'(z)) \frac{f_q(mz)}{f_q(z)^{m}} = (-1)^{m-1} \frac{\sigma (q-z)^m \sigma(mz+q)}{\sigma(z)^{m+m^2} \sigma(q)^{m+1}.} 
\end{equation}
If we consider the ratio on the right as a function in the variable $z$, it is an elliptic function with zero as unique pole. On the other hand, if we consider it as a function in the variable $q$, it is an elliptic function with zero as unique pole. Hence there exists a rational function $C_m (X,Y,Z,T) \in \QQ(g_2,g_3,X,Y,Z,T)$ such that 
\[C_m (\wp(z),\wp'(z),\wp(q),\wp'(q)) = (-1)^{m-1} \frac{\sigma (q-z)^m \sigma(mz+q)}{\sigma(z)^{m+m^2} \sigma(q)^{m+1}}\]
Set 
\[D_m (X,Y,Z,T) =( X - Z)^m  \Psi_m ( X,Y) \in \QQ(g_2,g_3)[X,Y,Z,T].\]
From the equality \eqref{eq:f_q(rationelz)}, we obtain that 
\[f_q(mz) = f_q(z)^m \frac{C_m(\wp(z), \wp'(z), \wp(q), \wp'(q))}{D_m(\wp(z), \wp'(z), \wp(q), \wp'(q))} \]
Hence, for two integers $m,n \geqslant 1,$
\[ \frac{f_q(mz)^n}{f_q(nz)^m} =   \frac{C_m(\wp(z), \wp'(z), \wp(q), \wp'(q))^n}{D_m(\wp(z), \wp'(z), \wp(q), \wp'(q))^n} \; \frac{D_n(\wp(z), \wp'(z), \wp(q), \wp'(q))^m}{C_n(\wp(z), \wp'(z), \wp(q), \wp'(q))^m}\cdotp\]
Replacing $z$ with $\frac{z}{n}$ we get 
\[f_q\Big(\frac{mz}{n}\Big)^n =  f_q(z)^m \; \frac{C_m(\wp(\frac{z}{n}), \wp'(\frac{z}{n}), \wp(q), \wp'(q))^n}{D_m(\wp(\frac{z}{n}), \wp'(\frac{z}{n}), \wp(q), \wp'(q))^n} \; \frac{D_n(\wp(\frac{z}{n}), \wp'(\frac{z}{n}), \wp(q), \wp'(q))^m}{C_n(\wp(\frac{z}{n}), \wp'(\frac{z}{n}), \wp(q), \wp'(q))^m}\cdotp\]

\medspace

\textit{Proof of the formulae in Table 3}
Applying twice the multiplication by $C \tau$ formula for the $\sigma$ function \cite[Table 3 \S 3]{BW} we have
\begin{align*}
	f_q(C \tau z)^{2} &= \frac{\sigma(C \tau(z+\frac{q}{C \tau}))^2}{\sigma(C \tau z )^2 \sigma(q)^2}  \; \rme^{ - 2\zeta(q) C \tau z}\\
	&=  \frac{\sigma(z+\frac{q}{C \tau})^{2AC} \rme^{-\kappa C \tau (z+\frac{q}{C \tau})^2} Q ( \wp(z+\frac{q}{C \tau}))}{\sigma(z)^{2AC} \rme^{-\kappa C \tau z^2} Q ( \wp(z)) \sigma(q)^2} \; \rme^{ - 2\zeta(q) C \tau z}.
\end{align*}
On the other hand we have 
\[ 	f_{\frac{q}{C \tau}}(z)^{2AC} = \frac{\sigma(z+\frac{q}{C \tau})^{2AC}}{\sigma( z )^{2AC} \sigma(\frac{q}{C \tau})^{2AC}}  \; \rme^{ - 2 \zeta(\frac{q}{C \tau}) AC z}.\]
Hence we can conclude
\begin{align*}
	f_q(C \tau z)^{2} &= 	f_{\frac{q}{C \tau}}(z)^{2AC}  \sigma\Big(\frac{q}{C \tau}\Big)^{2AC} \rme^{  2 \zeta(\frac{q}{C \tau}) AC z}  \frac{Q ( \wp(z+\frac{q}{C \tau})) }{Q ( \wp(z)) \sigma(q)^2 \rme^{ \frac{\kappa q^2}{C \tau} }}  \; \rme^{ - 2 z[\zeta(q) C \tau + \kappa q]} \\
	&= 	f_{\frac{q}{C \tau}}(z)^{2AC}  \sigma\Big(\frac{q}{C \tau}\Big)^{2AC}   \frac{Q ( \wp(z+\frac{q}{C \tau})) }{Q ( \wp(z)) \sigma(q)^2 \rme^{ \frac{\kappa q^2}{C \tau} }}  \; \rme^{ - 2 z[\zeta(q) C \tau + \kappa q- \zeta(\frac{q}{C \tau}) AC]}.
\end{align*}

%-------------------------------------------------------------

\section{Action of endomorphisms on the Weierstrass $\sigma$ function and on Serre functions}\label{actionEndo}

We extend \cite[Proposition 3.3]{BW} to the Weierstrass $\sigma$ function and to Serre functions.
Concerning Serre functions, particular attention is devoted to antisymmetric endomorphisms, namely endomorphisms $\phi$ satisfying $\phi+ \overline {\phi}=0.$ Such endomorphisms play a distinguished role in the geometry of 1-motives, since they give rise to the exceptional pairs $(P,Q)$ for which the Lie bracket contribution vanishes.

\begin{proposition}\label{Proposition:MultiplicationFormulaSigma}
	Let $\alpha$ be a nonzero element of $k$, and write $\alpha=r_1+r_2\tau$ where $r_1, r_2 \in \QQ $ are not both zero. Let $m\in\ZZ$ be the least positive integer such that $mr_1$ and $mr_2/C$ are integers. Then
	the function $\Sigma_{r_1,r_2},$ defined by 
	\[
	\sigma\bigl( \alpha z \bigr) \sigma \bigl( \overline{\alpha} z \bigr)= \varepsilon(r_1) \sigma(z)^{2(r_1^2+ \frac{A r_2^2}{C})} \rme^{- \frac{\kappa  r_2^2 \tau z^2}{C}} \Sigma_{r_1,r_2}(z),
	\] 
	where $\varepsilon(\alpha )=-1$ if $r_1$ is zero, 1 otherwise, belongs to the field $k(g_2,g_3,\wp(z/m),\wp'(z/m))$. 
\end{proposition}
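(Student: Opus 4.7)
Set $a := mr_1 \in \ZZ$, $b := mr_2/C \in \ZZ$, and $w := z/m$. Since $C\tau^2 + B\tau + A = 0$ yields $\overline{\tau} = -B/C - \tau$, we get $m\alpha = a + bC\tau$ and $m\overline{\alpha} = (a-bB) - bC\tau$, so that $\alpha z = aw + bC\tau\,w$ and $\overline{\alpha}z = (a-bB)w - bC\tau\,w$. In particular, since $C\tau\Omega \subseteq \Omega$, both $m\alpha\omega$ and $m\overline{\alpha}\omega$ lie in $\Omega$ for every $\omega \in \Omega$.

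\textbf{Plan.} The strategy is to show that $\Sigma_{r_1,r_2}$---understood through its $m^2$-th power, so that the non-integer exponent $2(r_1^2 + Ar_2^2/C) = 2(a^2 + ACb^2)/m^2$ becomes the honest integer $2(a^2 + ACb^2)$---is an $m\Omega$-elliptic function with coefficients in $k(g_2, g_3)$; by the classical structure theorem it is then a rational function in $\wp(w), \wp'(w)$ with coefficients in $k(g_2, g_3)$, as required. Concretely, I would compute $\sigma(\alpha z)^{m^2}\sigma(\overline{\alpha}z)^{m^2}$ by combining: (i) the integer-multiplication formula \eqref{sigma(mz)} for $\sigma(nw)$; (ii) the $C\tau$-multiplication formula from \cite[Table 3 \S 3]{BW}, $\sigma(C\tau\,w)^2 = \sigma(w)^{2AC}\rme^{-\kappa C\tau w^2}Q(\wp(w))$ with $Q \in k(g_2,g_3)[X]$; and (iii) the classical addition identity $\sigma(u+v)\sigma(u-v) = -\sigma(u)^2\sigma(v)^2(\wp(u)-\wp(v))$ applied with $(u,v) = (aw,\,bC\tau\,w)$ and $(u,v) = ((a-bB)w,\,bC\tau\,w)$ to extract $\sigma(\alpha z)$ and $\sigma(\overline{\alpha}z)$ respectively. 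The nuisance cofactors $\sigma(aw - bC\tau w)\sigma((a-bB)w + bC\tau w)$ produced by the addition formula would then be controlled by a further iteration of the same machinery, together with quasi-periodicity \eqref{sigma(z+omega)} to absorb the non-lattice shift $-bBw$.

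\textbf{Periodicity check.} Equivalently, and more cleanly, I would verify directly that $\Sigma_{r_1,r_2}(z + m\omega) = \Sigma_{r_1,r_2}(z)$ for every $\omega \in \Omega$: under $z \mapsto z + m\omega$, quasi-periodicity \eqref{sigma(z+omega)} introduces explicit exponential and sign corrections on each of $\sigma(\alpha z)$, $\sigma(\overline{\alpha}z)$, and $\sigma(z)$, while the quadratic factor $\rme^{-\kappa r_2^2\tau z^2/C}$ shifts by $\rme^{-\kappa r_2^2\tau(2mz\omega + m^2\omega^2)/C}$. The $z$-linear exponential coefficient collapses to zero by combining the Legendre relation \eqref{Equation:Legendre} with the defining relation \eqref{Equation:kappa}, namely $A\eta_1 - C\tau\eta_2 = \kappa\omega_2$; the constant-in-$z$ and sign contributions cancel via the same two relations. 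Tracing the coefficients of the resulting rational function through the ingredients ($\psi_n \in \QQ(g_2,g_3)[X,Y]$, $Q \in k(g_2,g_3)[X]$, and the rational endomorphism formula for $\wp(C\tau w)$) confirms that they lie in $k(g_2, g_3)$, which yields the claim.

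\textbf{Main obstacle.} The hard part will be the precise cancellation of the $z$-linear exponentials in the periodicity check: this is exactly where the relation \eqref{Equation:kappa} must be invoked, and it is what forces the presence of the quadratic exponential $\rme^{-\kappa r_2^2\tau z^2/C}$ in the statement, in order to absorb the drift produced by the $C\tau$-multiplication of $\sigma$. A secondary technical difficulty is giving a single-valued meaning to $\sigma(z)^{2(r_1^2 + Ar_2^2/C)}$, which requires the $m^2$-th-power workaround throughout the argument, and then carefully unwinding the signs to recover the prefactor $\varepsilon(r_1)$.
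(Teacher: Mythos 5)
Your toolkit (integer multiplication of $\sigma$, the $C\tau$-multiplication formula, the addition formula) is the same as the paper's, but the paper never verifies periodicity: it simply splits into the cases $r_2=0$, $r_1=0$ and $r_1r_2\neq 0$ and quotes the third rows of Tables 2, 3 and 1 of \cite{BW}, where in particular the addition-formula entry is applied so as to treat the product $\sigma(\alpha z)\sigma(\overline{\alpha}z)$ \emph{as a whole}, producing $\Sigma_{r_1,r_2}=\Sigma_{r_1,0}(z)\,\Sigma_{0,r_2}(z)\,(\wp(r_2\tau z)-\wp(r_1z))$, with membership in $k(g_2,g_3,\wp(z/m),\wp'(z/m))$ already contained in the quoted statements. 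Your plan breaks down exactly at this mixed case. Applying $\sigma(u+v)\sigma(u-v)=-\sigma(u)^2\sigma(v)^2(\wp(u)-\wp(v))$ with $(u,v)=(aw,bC\tau w)$ and $(u,v)=((a-bB)w,bC\tau w)$ extracts $\sigma(\alpha z)$ and $\sigma(\overline{\alpha}z)$ at the cost of the cofactors $\sigma\bigl((a-bC\tau)w\bigr)$ and $\sigma\bigl((a-bB+bC\tau)w\bigr)$, whose arguments are $\overline{\alpha}z+bBw$ and $\alpha z-bBw$. The discrepancy $bBw=bBz/m$ is proportional to the variable $z$, not a lattice element, so the quasi-periodicity relation \eqref{sigma(z+omega)} — valid only for shifts by $\omega\in\Omega$ — cannot ``absorb'' it; and ``a further iteration of the same machinery'' applied to the product of the two cofactors again produces a factor $\sigma(\nu w)^2$ with $\nu=b(C\tau-\tfrac{B}{2})$ a mixed element of $\QQ+\QQ C\tau$, i.e.\ an object of the same kind you started with, with no visible termination of the recursion (unless $B=0$). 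So the mechanism you propose for the only genuinely hard case does not work as stated; this is precisely the point the paper bypasses by citing the prepackaged identity of \cite{BW}.

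Your alternative ``periodicity check'' is the classical route, and your identification of where \eqref{Equation:Legendre} and \eqref{Equation:kappa} must enter (cancellation of the $z$-linear exponential, origin of the quadratic factor $\rme^{-\kappa r_2^2\tau z^2/C}$) is correct in spirit. But by itself it only shows that (the $m^2$-th power of) $\Sigma_{r_1,r_2}$ is elliptic for $m\Omega$, hence lies in $\CC(\wp(z/m),\wp'(z/m))$; the assertion that the coefficients lie in $k(g_2,g_3)$ cannot then be obtained by ``tracing the coefficients through the ingredients'', because in that route there is no explicit rational expression to trace — the ingredients ($\psi_n$, $Q$, the endomorphism formula for $\wp$) only appear if the explicit computation of your first route has actually been carried out, or if one supplies a separate rationality argument in the spirit of \cite[Lemma 3.1 (3)]{BW}. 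In short: the two obstacles you flag are the real ones, but neither of the mechanisms you offer for them (quasi-periodicity against a non-lattice shift; coefficient tracing without an explicit formula) closes the argument.
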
 

\begin{proof}
	Write $r_1=n_1/m$ and $r_2=Cn_2/m$, so that $n_1$ and $n_2$ are integers and 
	\[
	m\alpha=n_1+n_2 C\tau\in \End(\cE)\smallsetminus\{0\}.
	\] 
	If $r_2=0$ and $r_1\not=0$ we apply the third row of Table 2 in \cite{BW}: 
	\[
	\Sigma_{r_1,0}(z)=f_{n_1,m}(z)^{- 2m^2}\cdotp
	\]
	In case $r_1=0$ and $r_2=C$, we have $n_2=m=1$ and we apply the third row of Table 3 in \cite{BW}: 
	\[
	\Sigma _{0,C}(z)= (C \tau)^2 Q(\wp(z)) \cdotp
	\]
Combining these two cases, we obtain the result when $r_1=0$ and $r_2\not=0$ with
	\[
	\Sigma_{0,r_2}(z)=  (\Sigma_{0,C}(z))^{(\frac{r_2}{C})^2} \; \Sigma_{r_2/C,0}(C\tau z).
	\] 
	Finally when $r_1r_2\not=0$ we apply the third row of Table 1 in \cite{BW}:
	\[
	\Sigma_{r_1,r_2}=\Sigma_{r_1,0}(z)  \; \Sigma_{0,r_2}(z) \; \big(\wp(r_2 \tau z)-\wp(r_1 z)\big)\cdotp
	\]
\end{proof}

The following Lemma explains why in Table 3 we do not have written an expression of the complex number $	f_q (C \tau z)$ in terms of $f_q (z).$

\begin{lemma}\label{ExtensionEnd}
	If a non rational endomorphism of the elliptic curve $\cE$ lifts to an endomorphism of the extension $G$, then $G$ is parametrized by torsion points.
\end{lemma}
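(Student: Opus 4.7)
The plan is to translate the lifting hypothesis into an algebraic equation on the classifying point $Q\in\cE^{*}(\CC)^r$ of $G$ via the Weil--Barsotti classification of extensions, and then to exploit that a non-rational endomorphism of $\cE$ acts as a non-zero element of $k\smallsetminus\QQ$, which becomes an isogeny after clearing denominators.

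First, I would observe that any lift $\tilde\alpha\in\End(G)$ of $\alpha$ preserves the toric subgroup $\GG_m^r\subset G$: since $\cE$ is projective while $\GG_m^r$ is affine, the composition $\GG_m^r\hookrightarrow G\xrightarrow{\tilde\alpha} G\twoheadrightarrow\cE$ is trivial. Hence $\tilde\alpha$ restricts to some $\tilde\beta\in\End(\GG_m^r)=M_r(\ZZ)$ with matrix $B$, and the resulting commutative diagram of extensions is equivalent, via the push--pull correspondence for $\mathrm{Ext}^1(\cE,\GG_m^r)$, to the identity of classes $\alpha^{*}[G]=\tilde\beta_{*}[G]$.

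Next, I would unpack this identity through the Weil--Barsotti isomorphism $\mathrm{Ext}^1(\cE,\GG_m^r)\cong\cE^{*}(\CC)^r$. Under the canonical principal polarization $\cE\cong\cE^{*}$, the dual isogeny $\hat\alpha$ (which computes the pullback on Ext classes) corresponds to the Rosati involute of $\alpha$ on $\cE$; for a CM elliptic curve, this is complex conjugation, so $\hat\alpha=\bar\alpha\in k$. The pushout $\tilde\beta_{*}$ acts componentwise as $B$. The condition therefore becomes $(\bar\alpha\,I_r-B)\cdot Q=0$ in $\cE^r$.

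Finally, for the case $r=1$ that is relevant to Table~3 (where there is a single variable $q$), $B$ reduces to a single integer $b$ and the equation reads $(\bar\alpha-b)\,Q=0$ in $\cE^{*}$. Since $\alpha\notin\QQ$, $\bar\alpha-b$ is a non-zero element of $k\smallsetminus\QQ$; after multiplication by an appropriate positive integer it yields a non-zero element of $\End(\cE)$, hence an isogeny with finite kernel, so $Q$ lies in this finite kernel and is a torsion point. The main obstacle is the careful bookkeeping of the push--pull and Rosati dictionaries (in particular the identification of $\alpha^{*}$ with $\bar\alpha$); once the equation $(\bar\alpha I_r-B)Q=0$ is in hand, the number-theoretic conclusion is elementary for $r=1$. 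For general $r\geq 2$ one can apply Cayley--Hamilton and the identity $BQ=\bar\alpha Q$ to obtain $\chi_B(\bar\alpha)\cdot Q=0$, after which torsion follows provided one can rule out the accidental vanishing $\chi_B(\bar\alpha)=0\in k$ (automatic for $r=1$ because $\bar\alpha\notin\QQ$, but requiring extra input when the degree-$2$ minimal polynomial of $\bar\alpha$ could divide $\chi_B$).
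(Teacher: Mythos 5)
For the case of a single point ($r=1$), your argument is correct and is in substance the paper's own: what you reconstruct by hand --- the lift preserves the torus because $\Hom(\GG_m^r,\cE)=0$, push--pull on $\mathrm{Ext}^1(\cE,\GG_m^r)$, the Weil--Barsotti identification, and Rosati $=$ complex conjugation --- is exactly the criterion the paper imports from \cite[Proposition 1]{Bertrand}, namely that $\alpha$ lifts to $G_Q$ if and only if $\overline{\alpha}\,Q=NQ$ for some $N\in\ZZ$; from there both you and the paper conclude torsion from $\overline{\alpha}\notin\QQ$ (the paper only adds the explicit order $A+BN+CN^2$ via the minimal polynomial of $\overline{\alpha}$). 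So on the case the paper actually uses (Table 3 involves a single $q$), your proof is complete and follows essentially the same route, with the cited criterion re-derived rather than quoted.

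The genuine gap is the case $r\geqslant 2$, which you flag yourself, and you should know that it is not a gap you could have closed by better bookkeeping: the obstruction you isolate, $\chi_B(\overline{\alpha})=0$, really occurs and then the conclusion fails. Take $\End(\cE)=\ZZ[\ii]$, $\alpha=\ii$, $Q_1\in\cE^*(\CC)$ non-torsion and $Q_2=-\ii\,Q_1$; the $\ZZ$-module $\ZZ Q_1+\ZZ Q_2$ is stable under $\overline{\alpha}=-\ii$ (take $B$ the companion matrix of $X^2+1$, so $\chi_B(\overline{\alpha})=0$), hence by the very criterion you derived (equivalently, by the natural extension of Bertrand's criterion that the paper invokes) $\alpha$ lifts to the extension of $\cE$ by $\GG_m^2$ parametrized by $(Q_1,Q_2)$, although neither point is torsion. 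Note that the paper's own treatment of $r\geqslant 2$ is the one-line reduction ``to one point by additivity of the category of extensions'', which tacitly assumes the lift descends to each sub-extension $G_{Q_j}$, i.e.\ that your matrix $B$ is diagonal --- precisely what fails in this example. So your closing caveat does not point to a missing trick in your argument but to a defect of the statement for several points: the lemma (and your proof of it) is sound for $r=1$, and for general $r$ it would need either the extra hypothesis that the lift respects each $G_{Q_j}$, or a weaker conclusion (stability of $\ZZ Q_1+\cdots+\ZZ Q_r$ under $\overline{\alpha}$) in place of torsion.
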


\begin{proof}
	Let $G$ be the extension of $\cE$ by $\GG_m$ parametrized by the point $Q \in \cE (\CC).$ According to \cite[Proposition 1]{Bertrand} the endomorphism $ \tau: \cE \to \cE$ extends to an endomorphism of $G$ if and only if the $\ZZ$--module $\ZZ Q$ is stable under the transposed endomorphism $ \overline{\tau}: \cE^* \to \cE^*,$ where $\overline{\tau}$ is the complex conjugate of $\tau$, that is if and only if $ \overline{\tau} Q=NQ$ for some integer $N.$ The relation then
	\[ (A+B\overline{\tau} +C\overline{\tau}^2) Q=0\]
	implies that the point $Q \in \cE^* (\CC)$ is a point of $A+BN+CN^2$ torsion, that is $q \in  (\Omega \otimes_\ZZ \QQ)\smallsetminus \Omega .$ 
	
	If the extension $G$ is parametrized by several points of $\cE^* $ we reduce to the case of one point using the additivity of the category of extensions.
\end{proof}

\begin{proposition}\label{Proposition:PhiAntisymmetricP}
	Let $q_1$ and $q_2$ be two complex numbers in $\CC \smallsetminus \Omega,$ for which it exists a period $\omega$ and an integer $m$, $m \geqslant 2 ,$ such that $ \frac{\omega}{m} \notin \Omega$ and $q_1 + q_2 = \frac{\omega}{m}.$ Then the following statements hold.

	\begin{enumerate}
				\item The function $\Phi_1(z, q_1, \frac{\omega}{m})$ defined by 
		\[f_{q_1}(z) f_{q_2}(z)= (\wp(z) -\wp( q_1)) 	\rme^{ \frac{z}{2} \frac{\wp'( q_1)-\wp'(q_2)}{\wp( q_1)-\wp(q_2)}} f_{\frac{\omega}{m}}(z)  \Phi_1\Big(z, q_1, \frac{\omega}{m}\Big)\]
		belongs to $\QQ(g_2,g_3,\wp(z), \wp'(z),\wp( \frac{\omega}{m}), \wp'( \frac{\omega}{m}) ,\wp(q_1), \wp'(q_1)).$ 
		\item $f_{q_2}(z) = \rme^{- \frac{z}{2} \frac{\wp'(- q_1)-\wp'(\frac{\omega}{m})}{\wp(- q_1)-\wp(\frac{\omega}{m})}} f_{-q_1}(z) f_{\frac{\omega}{m}}(z)  \Phi_1\big(z, q_1, \frac{\omega}{m}\big).$
			\end{enumerate}
			
			Moreover, for any $q \in \CC \smallsetminus \Omega,$ we have the equality
		\[	f_{q}(z)f_{-q}(z) = \wp(z) - \wp (q) \in \QQ (g_2,g_3, \wp(z),\wp(q)).\footnote{The three equalities of this Proposition imply that
			\[ \frac{\wp'(- q_1)-\wp'(\frac{\omega}{m})}{\wp(- q_1)-\wp(\frac{\omega}{m})} +  \frac{\wp'( q_1)-\wp'(q_2)}{\wp( q_1)-\wp(q_2)} =0 .\]}\]
\end{proposition}

\begin{proof}
(1) Using the addition formula for Serre function Table 1 \S 3, the oddness of the Weierstrass $\sigma$ function and Lemmas \ref{Prop:SigmaTorsion} and \ref{LemmaSigma} we have
\[
\frac{	f_{\frac{\omega}{m}}(z) } {f_{q_1}(z) f_{q_2}(z) }=  
\frac{\sigma (z+ \frac{\omega}{m}) \sigma(q_1) \sigma (- q_1 +\frac{\omega}{m})\sigma (z) }{\sigma (\frac{\omega}{m}) \sigma (z+ q_1 ) \sigma (z - q_1 +\frac{\omega}{m})} 	\rme^{- \frac{z}{2} \frac{\wp'( q_1)-\wp'(q_2)}{\wp( q_1)-\wp(q_2)}}  \]
	\begin{align*}
	& = 	\frac{ \sigma(q_1)\sigma (z) }{\sigma (z+ q_1 ) }
	\rme^{- \frac{z}{2} \frac{\wp'( q_1)-\wp'(q_2)}{\wp( q_1)-\wp(q_2)}}\cdot\\
	\nonumber	&  \qquad \qquad  \cdot \sigma(z) \rme^{\frac{\eta}{m^2} (mz+ \frac{\omega}{2})} \Big( \epsilon(\omega)   \frac{\psi_m(\wp(z),\wp'(z))}{\psi_{m}(\wp(z+\frac{\omega}{m}),\wp'(z+\frac{\omega}{m})) }\Big)^{\frac{1}{m^2}} \\
		\nonumber	&  \qquad \qquad  \cdot \sigma(-q_1)   \rme^{\frac{\eta}{m^2} (m(-q_1)+ \frac{\omega}{2})} \Big( \epsilon(\omega) \frac{\psi_m(\wp(-q_1),\wp'(-q_1))}{\psi_{m}(\wp(-q_1+\frac{\omega}{m}),\wp'(-q_1+\frac{\omega}{m})) }\Big)^{\frac{1}{m^2}} \\
			\nonumber	&  \qquad \qquad  \cdot \frac{1}{ \rme^{ \frac{\eta \omega}{ 2m^2}}} \Big( \epsilon(\omega) (-1)^m \frac{1}{\psi_{m+1}(\wp(\frac{\omega}{m}),\wp'(\frac{\omega}{m})) } \Big)^{-\frac{1}{m(m+2)}} \cdot\\
	\nonumber	&  \qquad \qquad  \cdot \frac{1}{	 \sigma (z- q_1) \rme^{ \frac{\eta}{m^2} (m(z- q_1)+ \frac{\omega}{2})}} \Big(\epsilon(\omega)  \frac{\psi_m(\wp(z- q_1),\wp'(z- q_1))}{\psi_{m}(\wp(z- q_1+\frac{\omega}{m}),\wp'(z- q_1+\frac{\omega}{m})) } \Big)^{-\frac{1}{m^2}} \cdot\\
	&= -	\frac{ \sigma(q_1)^2\sigma (z)^2 }{\sigma (z+ q_1 ) \sigma (z- q_1 )} 	\rme^{- \frac{z}{2} \frac{\wp'( q_1)-\wp'(q_2)}{\wp( q_1)-\wp(q_2)}} \epsilon(\omega)^{ \frac{2}{m^2(m+2)}} (-1)^{-\frac{1}{m+2}} \\
	\nonumber	&  \qquad \qquad  \cdot  \Big(  \frac{\psi_m(\wp(z),\wp'(z))}{\psi_{m}(\wp(z+\frac{\omega}{m}),\wp'(z+\frac{\omega}{m})) }\Big)^{\frac{1}{m^2}} \Big(  \frac{\psi_m(\wp(-q_1),\wp'(-q_1))}{\psi_{m}(\wp(-q_1+\frac{\omega}{m}),\wp'(-q_1+\frac{\omega}{m})) }\Big)^{\frac{1}{m^2}} \cdot \\
	\nonumber	&  \qquad \qquad  \cdot  \psi_{m+1}\Big(\wp(\frac{\omega}{m}),\wp'(\frac{\omega}{m}) \Big)^{\frac{1}{m(m+2)}}    \Big(  \frac{\psi_m(\wp(z- q_1),\wp'(z- q_1))}{\psi_{m}(\wp(z- q_1+\frac{\omega}{m}),\wp'(z- q_1+\frac{\omega}{m})) } \Big)^{-\frac{1}{m^2}}\\
	&= \frac{1}{\wp(z) -\wp( q_1)} 	\rme^{- \frac{z}{2} \frac{\wp'( q_1)-\wp'(q_2)}{\wp( q_1)-\wp(q_2)}} \epsilon(\omega)^{ \frac{2}{m^2(m+2)}} (-1)^{-\frac{1}{m+2}} \\
	\nonumber	&  \qquad \qquad  \cdot  \Big(  \frac{\psi_m(\wp(z),\wp'(z))}{\psi_{m}(\wp(z+\frac{\omega}{m}),\wp'(z+\frac{\omega}{m})) }\Big)^{\frac{1}{m^2}} \Big(  \frac{\psi_m(\wp(-q_1),\wp'(-q_1))}{\psi_{m}(\wp(-q_1+\frac{\omega}{m}),\wp'(-q_1+\frac{\omega}{m})) }\Big)^{\frac{1}{m^2}} \cdot \\
	\nonumber	&  \qquad \qquad  \cdot  \psi_{m+1}\Big(\wp(\frac{\omega}{m}),\wp'(\frac{\omega}{m}) \Big)^{\frac{1}{m(m+2)}}    \Big(  \frac{\psi_m(\wp(z- q_1),\wp'(z- q_1))}{\psi_{m}(\wp(z- q_1+\frac{\omega}{m}),\wp'(z- q_1+\frac{\omega}{m})) } \Big)^{-\frac{1}{m^2}}.\\
\end{align*}
	Define $ \Phi_1(z, q_1, \frac{\omega}{m})$ as the inverse of the function 
\[ \epsilon(\omega)^{ \frac{2}{m^2(m+2)}} (-1)^{-\frac{1}{m+2}}	 \Big(  \frac{\psi_m(\wp(z),\wp'(z))}{\psi_{m}(\wp(z+\frac{\omega}{m}),\wp'(z+\frac{\omega}{m})) }\Big)^{\frac{1}{m^2}} 
 \Big(  \frac{\psi_m(\wp(-q_1),\wp'(-q_1))}{\psi_{m}(\wp(-q_1+\frac{\omega}{m}),\wp'(-q_1+\frac{\omega}{m})) }\Big)^{\frac{1}{m^2}}\cdot \]
\[   \psi_{m+1}\Big(\wp(\frac{\omega}{m}),\wp'(\frac{\omega}{m}) \Big)^{\frac{1}{m(m+2)}}    \Big(  \frac{\psi_m(\wp(z- q_1),\wp'(z- q_1))}{\psi_{m}(\wp(z- q_1+\frac{\omega}{m}),\wp'(z- q_1+\frac{\omega}{m})) } \Big)^{-\frac{1}{m^2}}.\]

	(2) Using the addition formula for Serre function Table 1 \S 3 and Lemmas \ref{Prop:SigmaTorsion} and \ref{LemmaSigma} we have
	\[
	\frac{	f_{q_2}(z) } {f_{-q_1}(z) f_{\frac{\omega}{m}}(z) }=  
		\frac{\sigma (z- q_1 +\frac{\omega}{m}) \sigma(-q_1) \sigma (\frac{\omega}{m})\sigma (z) }{\sigma (-q_1 +\frac{\omega}{m}) \sigma (z- q_1 ) \sigma (z +\frac{\omega}{m})} 	\rme^{- \frac{z}{2} \frac{\wp'(- q_1)-\wp'(\frac{\omega}{m})}{\wp(- q_1)-\wp(\frac{\omega}{m})}}  \]
		\begin{align*}
		& = 	\frac{ \sigma(-q_1)\sigma (z) }{\sigma (z- q_1 ) }
		\rme^{- \frac{z}{2} \frac{\wp'(- q_1)-\wp'(\frac{\omega}{m})}{\wp(- q_1)-\wp(\frac{\omega}{m})}} \cdot\\
			\nonumber	&  \qquad \qquad  \cdot 	 \sigma (z- q_1) \rme^{ \frac{\eta}{m^2} (m(z- q_1)+ \frac{\omega}{2})} \Big(\epsilon(\omega)  \frac{\psi_m(\wp(z- q_1),\wp'(z- q_1))}{\psi_{m}(\wp(z- q_1+\frac{\omega}{m}),\wp'(z- q_1+\frac{\omega}{m})) } \Big)^{\frac{1}{m^2}} \cdot\\
	\nonumber	&  \qquad \qquad  \cdot \rme^{ \frac{\eta \omega}{ 2m^2}} \Big( \epsilon(\omega) (-1)^m \frac{1}{\psi_{m+1}(\wp(\frac{\omega}{m}),\wp'(\frac{\omega}{m})) } \Big)^{\frac{1}{m(m+2)}} \cdot\\
	\nonumber	&  \qquad \qquad  \cdot \frac{1}{\sigma(-q_1)   \rme^{\frac{\eta}{m^2} (m(-q_1)+ \frac{\omega}{2})}}  \Big( \epsilon(\omega) \frac{\psi_m(\wp(-q_1),\wp'(-q_1))}{\psi_{m}(\wp(-q_1+\frac{\omega}{m}),\wp'(-q_1+\frac{\omega}{m})) }\Big)^{-\frac{1}{m^2}} \\
	\nonumber	&  \qquad \qquad  \cdot \frac{1}{\sigma(z) \rme^{\frac{\eta}{m^2} (mz+ \frac{\omega}{2})}}  \Big( \epsilon(\omega)   \frac{\psi_m(\wp(z),\wp'(z))}{\psi_{m}(\wp(z+\frac{\omega}{m}),\wp'(z+\frac{\omega}{m})) }\Big)^{-\frac{1}{m^2}} \\
	&=  \epsilon(\omega)^{- \frac{2}{m^2(m+2)}} (-1)^{\frac{1}{m+2}}	\rme^{- \frac{z}{2} \frac{\wp'(- q_1)-\wp'(\frac{\omega}{m})}{\wp(- q_1)-\wp(\frac{\omega}{m})}} \Big(  \frac{\psi_m(\wp(z- q_1),\wp'(z- q_1))}{\psi_{m}(\wp(z- q_1+\frac{\omega}{m}),\wp'(z- q_1+\frac{\omega}{m})) } \Big)^{\frac{1}{m^2}} \cdot \\
		\nonumber	&  \qquad \qquad  \cdot \psi_{m+1}\Big(\wp(\frac{\omega}{m}),\wp'(\frac{\omega}{m}) \Big)^{-\frac{1}{m(m+2)}}   \Big(  \frac{\psi_m(\wp(-q_1),\wp'(-q_1))}{\psi_{m}(\wp(-q_1+\frac{\omega}{m}),\wp'(-q_1+\frac{\omega}{m})) }\Big)^{-\frac{1}{m^2}} \cdot \\
			\nonumber	&  \qquad \qquad  \cdot  \Big(  \frac{\psi_m(\wp(z),\wp'(z))}{\psi_{m}(\wp(z+\frac{\omega}{m}),\wp'(z+\frac{\omega}{m})) }\Big)^{-\frac{1}{m^2}} \\
			&= \rme^{- \frac{z}{2} \frac{\wp'(- q_1)-\wp'(\frac{\omega}{m})}{\wp(- q_1)-\wp(\frac{\omega}{m})}}  \Phi_1\Big(z, q_1, \frac{\omega}{m}\Big).
	\end{align*}

	(3)  Using the addition formula for the $\sigma$ function \cite[Table 1 \S 3]{BW} and the oddness of the Weierstrass functions $\zeta$ and $\sigma$ we get
\begin{align*}
	f_{q}(z)f_{-q}(z) &= \frac{\sigma (z+ q) }{ \sigma (z)\sigma(q)} 
	\frac{\sigma (z- q) }{ \sigma (z)\sigma(- q)} \\
	& =-\frac{\sigma (z)^2  \sigma(q)^2 \big( \wp (q) - \wp(z) \big)}
	{ \sigma (z)^2\sigma( q)^2}\\
	&= \wp(z) - \wp (q)\in \QQ (g_2,g_3, \wp(z),\wp(q)).
\end{align*}

\end{proof}

Because of the dual nature of 1-motives, we have the following dual result to
Proposition \ref{Proposition:PhiAntisymmetricP} that we leave to the reader:

\begin{proposition}\label{Proposition:PhiAntisymmetricQ}
Let $p_1$ and $p_2$ be two complex numbers, for which it exists a period $\omega$ and an integer $m$, $m \geqslant 2 ,$ such that $ \frac{\omega}{m} \notin \Omega$ and $p_1 + p_2 = \frac{\omega}{m}.$ Then the following statements hold.
	
	\begin{enumerate}
		\item The function $\Phi_1(z, p_1, \frac{\omega}{m})$ defined by 
		\[f_{z}(p_1) f_{z}(p_2)= \big(\wp(z) -\wp( p_1)\big) f_{z}\Big(\frac{\omega}{m}\Big)  \Phi_1\Big(z, p_1, \frac{\omega}{m}\Big)\]
		belongs to $\QQ(g_2,g_3,\wp(z), \wp'(z),\wp( \frac{\omega}{m}), \wp'( \frac{\omega}{m}) ,\wp(p_1), \wp'(p_1)).$ 
		\item $f_{z}(p_2) = f_{z}(-p_1) f_{z}\big(\frac{\omega}{m}\big)    \Phi_1\big(z, p_1, \frac{\omega}{m}\big).$
	\end{enumerate}
	Moreover, for any $p \in \CC$ we have the equality
	\[	f_{z}(p)f_{z}(-p) = \wp(z) - \wp (p) \in \QQ (g_2,g_3, \wp(z),\wp(p)). \]
\end{proposition}

\section{Dimensions of motivic Galois groups of 1-motives}\label{MotGalGroup}
	Let $M=[u:\ZZ \rightarrow  G^n   ],
u(1)=(R_1, \dots, R_n ) \in G^n(\CC),$ be the 1-motive \eqref{GPC-1-motive}. Denote by 
 $ \Galmot (M)$ its motivic Galois group. 
By the weight filtration, the motivic Galois group $ \Galmot (M)$ of $M$ fits into 
the exact sequence 
\begin{equation}\label{eq:shortexactsequenceUR}
	0 \longrightarrow \UR(M) \longrightarrow \Galmot (M) \longrightarrow \Galmot (\cE) \longrightarrow 0
\end{equation} 
where $\UR(M)$ is its unipotent radical and $\Galmot (\cE)$ is the motivic Galois group of $\cE$, which is its largest reductive quotient (see for example \cite[\S 3.1]{BPSS}). 
In order to describe the Lie algebra of the unipotent radical $\UR(M)$ we proceed in three steps:

\begin{enumerate}
	\item Let 
\[B\]
 be 
the smallest abelian sub-variety (modulo isogenies) of $\cE^n\times \cE^{*r}$
which contains a multiple of the point $(P_1,\dots,P_n,Q_1,\dots,Q_r) \in \cE^n \times \cE^{*r} (\CC).$ \textit{Its dimension is governed by the points $P_1,\dots,P_n,Q_1,\dots,Q_r $} and
$$\dim B \leqslant r+n .$$

\medspace

\item  Using the Poincar\'e biextension $\mathcal{P}$ of $(\cE,\cE^*)$ by $\GG_m,$
in \cite[Example 2.8]{B03} we have constructed explicitly a biextension $\mathcal{B}$ of $(\cE^n \times \cE^{*r},\cE^n \times \cE^{*r})$ by $\GG_m^{nr},$ whose pull-back $d^* \mathcal{B}$ via the diagonal morphism $d:\cE^n \times \cE^{*r} \to (\cE^n \times \cE^{*r}) \times (\cE^n \times \cE^{*r})$ is a $\GG_m^{nr}$-torsor over $\cE^n \times \cE^{*r}$ inducing a Lie bracket 
$ [\cdot,\cdot]: (\cE^n \times \cE^{*r}) \otimes (\cE^n \times \cE^{*r}) \to \GG_m^{nr}$
(see \cite[Lemma 3.3, p.600]{B03} and see \cite[(2.8.4)]{B03} for an explicit description of this Lie bracket).
Now we restrict the basis of the $\GG_m^{nr}$-torsor $d^* \mathcal{B}$ to the abelian variety $B$ by taking the pull-back 
$I^*d^* \mathcal{B}$ of $d^* \mathcal{B}$ via the inclusion $I: B \hookrightarrow \cE^n \times \cE^{*r}$. Let
 \[Z'(1)\]
  be the smallest sub-torus of $\GG_m^{nr}$ which contains the image of the restriction of Lie bracket to $B$, that is the image of $[\cdot,\cdot]: B \otimes B \to \GG_m^{nr}.$ \textit{Its dimension is governed by the abelian variety $B$} (see \cite[4.2]{BP}) and
  $$\dim Z'(1) \leqslant nr.$$
In \cite[Lemma 3.1]{BP} we have showed that $Z'(1 )$ coincides with the smallest sub-torus of $\GG_m^{nr}$ which contains the values of the factor of automorphy of the  $\GG_m^{nr}$-torsor $I^*d^*\mathcal{B}$. In particular,  the push-down ${pr}_*I^*d^* \mathcal{B}$ via the projection $pr:\GG_m^{nr} \twoheadrightarrow \GG_m^{nr} / Z'(1)$ of the torsor $I^*d^* \mathcal{B}$
 is \textit{the trivial $\GG_m^{nr}  / Z'(1)$-torsor over $B$}, i.e. 
${pr}_*I^*d^* \mathcal{B}= B \times \GG_m^{nr} / Z'(1) .$

\medspace

\item Let $ v:\ZZ^n\to \cE$ and $v^*:\ZZ^r\to \cE^* $ 
be the group homomorphisms determined by $v(x_i)=P_i$ and $v^*(y_j^\vee)=Q_j,$
where $x_1,\ldots,x_n$ and $y_1^\vee,\ldots,y_r^\vee$ denote the standard bases of
\(\ZZ^n\) and \(\ZZ^r\), respectively. Let $G_j$ be the extension of the elliptic curve $\cE$ by $\GG_m$ parametrized by the point $Q_j=\exp_{\cE^*}(q_j).$ The additivity of the category of extensions implies that
$G$ is isomorphic to the extension  $G_{1}\times \dots\times G_{r}$.
Therefore for $i=1, \dots , n,$ having the point $R_i $ \eqref{R} in the fibre $G_{P_i}$ of $G$ above the point $P_i $ is equivalent to having the $r$ points $R_{i1}=\exp_{G_1}(p_i,t_{i1}), \dots , R_{ir}=\exp_{G_r}(p_i,t_{ir})$ in the fibres $(G_{1})_{P_i} , \dots , (G_{r})_{P_i}$ respectively.
By \cite[\S 1.2]{BP}, having the group homomorphism  $u: \ZZ \to G^n, u(1)=(R_{ij})_{i,j}$ is equivalent to having
a trivialization (= biadditive section) $\psi : \ZZ \times \ZZ \longrightarrow  (v \times v^*)^* \cP$ of the pull-back $(v \times v^*)^* \cP$ via $v \times v^*$ of the Poincar\'e biextension $\mathcal{P}$ of $(\cE,\cE^*)$ by $\GG_m$:
\[R_{ij} = \psi(x_i,y^\vee_j) \in \cP_{P_i,Q_j} \simeq (G_j)_{P_i}. \]

\par\noindent Consider the two group homomorphisms $V: \ZZ \to \cE^n$ and $V^*: \ZZ \to \cE^{*r} $  defined by the points $P_1,\dots,P_n$ in $ \cE^n(\CC)$ and $Q_1,\dots,Q_r $ in $ \cE^{*r}(\CC)$ respectively. According to \cite[\S 3.1]{BP}, having
the trivialization $\psi : \ZZ \times \ZZ \longrightarrow  (v \times v^*)^* \cP$ is equivalent to having
 a trivialization  $\Psi : \ZZ \times \ZZ \longrightarrow  (V \times V^*)^* I^*d^* \mathcal{B}$ of the pull-back $(V \times V^*)^*I^*d^* \mathcal{B}$ via $V \times V^*$ of the $\GG_m^{nr}$-torsor $I^*d^* \mathcal{B}$ over $B$.
 The trivialization $\Psi$ defines  
 a point 
 $	\Psi(1,1) = \big(\psi(x_i,y^\vee_j)\big)_{i,j }  \in   \big( (V \times V^*)^* I^*d^*\mathcal{B}\big)_{1,1} $
 which in turn furnishes a point
 \[\tR \in (I^*d^*\mathcal{B})_{(P,Q)}\]
 in the fibre of $I^*d^*\mathcal{B}$ over the point $(P,Q)=(P_1,\dots,P_n,Q_1,\dots,Q_r)  \in  B .$ 
  Because of the equality
 $ (V \times V^*)^*  {pr}_*I^*d^* \mathcal{B}  =  {pr}_*  (V \times V^*)^* I^*d^* \mathcal{B} ,$
 the  trivialization $\Psi$ defines a trivialization 
 $
 	{pr}_*\Psi : \ZZ \times \ZZ \longrightarrow  (V \times V^*)^*  {pr}_* I^*d^* \mathcal{B} 
$
 of the pull-back via $ V \times V^*$ of the trivial torsor $  {pr}_*I^*d^* \mathcal{B}$. Denote by $\pi: {pr}_*I^*d^* \mathcal{B} \twoheadrightarrow \GG_m^{nr}/Z'(1)$ the projection on the second factor.
 The point 
$	{pr}_*\Psi(1,1) \in  \big(  (V \times V^*)^* {pr}_* I^*d^*\mathcal{B}\big)_{1,1} $
 corresponds to the point 
 \[	pr_* \tR =\big((P,Q), \pi (pr_* \tR)\big) =  \big((P,Q), \pi (pr_*  (\psi(x_i,y^\vee_j))_{i= 1, \dots, n \atop j = 1, \dots, r } )\big).
 \]
 in the fibre of the trivial torsor $pr_* I^*d^*\mathcal{B} =B\times \GG_m^{nr}/Z'(1)$ over the point $(P,Q) \in  B.$ Let
 \[Z(1)\]
 be the smallest sub-torus of $\GG_m^{nr}$ which contains $Z'(1)$ and such that $Z(1)/Z'(1)$ contains the point 
\begin{equation}\label{piprR}
	\pi (pr_* \tR) =\pi \big(pr_*  \big(\psi(x_i,y^\vee_j)\big)_{i= 1, \dots, n \atop j = 1, \dots, r } \big) .
\end{equation}
 \textit{The dimension of the torus $Z(1)/Z'(1)$ is governed by the point  $\pi (pr_* \tR)$} (see \cite[4.3]{BP}) and
  $$\dim Z(1) =  \dim Z'(1) + \dim Z(1)/Z'(1)  \leqslant nr.$$
Notice that the point $
\pi(pr_*\widetilde R)$
depends on the trivialization $\psi$ and therefore on the points
$R_{ij}$ defining the $1$-motive.
 \end{enumerate}

  By 
\cite[Theorem 1.2.1]{A19} the motivic Galois group of $M$ coincides with its Mumford-Tate group and hence
\cite[Lemma 3.5]{BPSS} implies that $\dim \UR(M)= 2 \dim B + \dim Z(1)$ (recall that in the introduction we have assumed the field of definition $K$ of $M$ to be algebraically closed).
From the short exact sequence \eqref{eq:shortexactsequenceUR} we get that 
\begin{equation} \label{DimGalmot}
 	\dim \Galmot (M)= \dim \Galmot (\cE) + 2 \dim B + \dim Z(1).
\end{equation}
 In order to compute the dimension of the torus $Z(1)$ we proceed by \textit{dévissage}.
For $i=1, \dots n$ and $j=1, \dots,r,$ consider the 1-motive
\begin{equation}\label{Mij}
M_{ij}=[u_{ij}:\ZZ \to  G_j],\qquad  u_{ij}(1)=R_{ij}  \in G_j(\CC).
\end{equation}
According to \cite[Lemma 2.2]{B19}, the 1-motive $M$ defined in \eqref{GPC-1-motive} and the 1-motive $\oplus_{j=1}^r\oplus_{i=1}^n M_{ij}$ generate the same tannakian category and so they have the same motivic Galois group. We therefore obtain the inequality
\[
\dim \Galmot (M) = \dim \Galmot (\oplus_{j=1}^r\oplus_{i=1}^n M_{ij})  \leqslant \oplus_{j=1}^r \oplus_{i=1}^n \dim \Galmot (M_{ij})
\]
and in particular
\begin{equation}\label{reduction}
	\dim \UR (M) = \dim \UR (\oplus_{j=1}^r\oplus_{i=1}^n M_{ij})  \leqslant \oplus_{j=1}^r \oplus_{i=1}^n \dim \UR(M_{ij}).
\end{equation}
We add the index $ij$ to the pure motives underlying the Lie algebra of the unipotent radical of the 1-motive $M_{ij}: B_{ij} \subseteq \cE\times \cE^*, Z'_{ij}(1) \subseteq \GG_m, Z_{ij}(1) \subseteq \GG_m.$
In the same way let ${\tR}_{ij}$ the point introduced in step (3) of the description of the Lie algebra of $\UR(M_{ij}).$

Denote by 
\begin{equation}\label{def:NoLieBracket}
	\mathrm{NoLieBracket}
\end{equation}
 the subset of $\{1,\dots,n\} \times \{1,\dots,r\}$ consisting of couples $(i,j)$ such that $\dim Z_{ij}'(1) =0.$ According to \cite[Corollary 4.5]{BP} $\mathrm{NoLieBracket}$ is
the set of couples $(i,j)$ such that one of the following conditions is satisfied:
 \begin{itemize}
 	\item  $P_i$ and $Q_j$ are both torsion,
 	\item  $P_i$ or $Q_j$ is a torsion point,
 	\item  $P_i$ and $Q_j$ are $k$-linearly dependent via an antisymmetric homomorphism, that is $\phi(P_i)=Q_j$ (or $\phi(Q_j)=P_i$) with $\phi+\overline{\phi}=0.$
 \end{itemize}
 Since it is the smallest sub-torus of $\GG_m$ which contains the values of the factor of automorphy of the $\GG_m$-torsor $I^*d^*\mathcal{B}_{ij}$, $ Z_{ij}'(1)$ is the trivial torus if and only if $I^*d^*\mathcal{B}_{ij}$ is the trivial $\GG_m$-torsor over ${B}_{ij}:$ $I^*d^*\mathcal{B}_{ij} = {B}_{ij} \times \GG_m.$ 
 
  Set 
  \begin{equation}\label{def:LieBracket}
  	\mathrm{LieBracket} = \big(\{1,\dots,n\} \times \{1,\dots,r\} \big) \smallsetminus \mathrm{NoLieBracket}.
  \end{equation}
For any couple $(i,j) \in \mathrm{LieBracket}$,  $\dim Z_{ij}'(1) =1$ and consequently  $\dim Z_{ij} (1)/ Z_{ij}'(1) =0.$

\begin{theorem}\label{Teo:dimGal(M)}
	Let $M=[u:\ZZ \rightarrow  G^n   ],
	u(1)=(R_1, \dots, R_n ) \in G^n(\CC),$ be the 1-motive \eqref{GPC-1-motive} defined by the complex numbers $q_j,p_i,t_{ij}$ \eqref{Points}. Then
	\begin{align*}
		\dim B &= \dim_k < p_i,q_j>_{i,j}\\
		\dim Z'(1) &= \dim_\QQ < \beta_{i,j}+\beta_{i,j}^t>_{  (i,j) \in \mathrm{LieBracket}}\\
		\dim Z(1)/Z'(1) &= \dim_{\QQ}
		< \textstyle{\sum_{(i,j)\in \mathrm{LieBracket}}}	\alpha_{ij}\log( s_{ij}) >
		+\dim_\QQ < t_{ij}>_{ (i,j) \in \mathrm{NoLieBracket}}
	\end{align*}

	where 
	\begin{itemize}
		\item  $< p_i,q_j>_{i,j}$ is the sub $k$--vector space of $\CC / (\Omega \otimes_\ZZ \QQ)$ generated by the classes of the complex numbers $ p_1, \dots, p_n, q_1, \dots,q_r $  modulo $\Omega \otimes_\ZZ \QQ,$
		\item $< \beta_{i,j}+\beta_{i,j}^t>_{  (i,j) \in \mathrm{LieBracket}}$ is the sub $\QQ$--vector space of $\mathrm{Hom}_\QQ(B,B^*)  := \mathrm{Hom}(B,B^*) \otimes_\ZZ \QQ $ generated by the group homomorphisms $\beta_{i,j}+\beta_{i,j}^t$ with $(i,j) \in  \mathrm{LieBracket},$
		\item $	< \textstyle{\sum_{(i,j)\in \mathrm{LieBracket}}}	\alpha_{ij}\log( s_{ij}) >$ is the sub $\QQ$--vector subspace of  $\CC/ 2 \pi \ii \QQ$ generated by the classes of the logarithms  $\sum_{i,j}\alpha_{ij}\log(s_{ij})$, with $\sum_{i,j}\alpha_{ij} x_i\otimes y_j^\vee$ running over ${Z'}^\perp$ and $(s_{ij})$ any point of $\GG_m^{nr}$ projecting onto $ \pi \big( pr_*\big(\psi(x_i, y_j^\vee)\big)_{(i,j)\in \mathrm{LieBracket}} \big),$
		\item  $< t_{ij}>_{ (i,j) \in \mathrm{NoLieBracket}}$ is the sub $\QQ$--vector space of $\CC / 2 \pi \ii \QQ$ generated by the classes of the complex numbers $t_{ij} $  modulo $2 \pi \ii \QQ$ with $(i,j) \in  \mathrm{NoLieBracket}$.
	\end{itemize} 
	In particular 
	\[	\dim \Galmot(M)	= \frac{4}{\dim_\QQ k} +2 \dim_k < p_i,q_j>_{i,j}+ \dim_\QQ < \beta_{i,j}+\beta_{i,j}^t>_{  (i,j) \in \mathrm{LieBracket}} + \]
	\[ \dim_{\QQ}
	< \textstyle{\sum_{(i,j)\in \mathrm{LieBracket}}}	\alpha_{ij}\log( s_{ij}) >+ \dim_\QQ < t_{ij}>_{ (i,j) \in \mathrm{NoLieBracket}}.\]
\end{theorem}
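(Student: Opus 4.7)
The plan is to assemble the formula by computing each of the three ingredients $\dim B$, $\dim Z'(1)$ and $\dim Z(1)/Z'(1)$ that appear in \eqref{DimGalmot}, and then to combine them with the known value $\dim\Galmot(\cE)=4/\dim_\QQ k$ (which is $4$ in the non--CM case and $2$ in the CM case, the latter because $\Galmot(\cE)$ is then a torus of rank $2$ cut out by CM-conditions). The framework set up in (1)--(3) preceding the theorem already isolates the three geometric objects whose dimensions we need; our job is to translate them into the purely arithmetic data on the right--hand side.

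For $\dim B$ I would argue as follows. Up to isogeny, an abelian subvariety of $\cE^n\times \cE^{*r}$ corresponds bijectively to a $k$-subspace of $k^{n+r}$, and its dimension equals the $k$-dimension of that subspace (in both the CM and non--CM case). The condition ``$B$ contains a multiple of $(P,Q)$'' is the same as ``$\Lie B$ contains the class of $(p_1,\dots,p_n,q_1,\dots,q_r)$ in $(\CC/(\Omega\otimes_\ZZ\QQ))^{n+r}$'' since multiplication by an integer kills the torsion ambiguity. Hence the smallest such $B$ corresponds to the $k$-span of $(p_1,\dots,p_n,q_1,\dots,q_r)$ in $\CC/(\Omega\otimes_\ZZ\QQ)$, giving $\dim B=\dim_k\langle p_i,q_j\rangle_{i,j}$.

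For $\dim Z'(1)$, I would use the explicit description \eqref{Z'(1)}: pulling back the Poincar\'e biextension $\cP$ by the projections $\gamma_i:B\to\cE$, $\gamma_j^*:B\to\cE^*$ and applying the standard identification between line bundles on $B$ and symmetric homomorphisms $B\to B^*$ (via the pullback of the Poincar\'e line bundle on $B\times B^*$) shows that the $(i,j)$-component of the Lie bracket $[\cdot,\cdot]:B\otimes B\to\GG_m^{nr}$ is controlled by the symmetrized homomorphism $\beta_{i,j}+\beta_{i,j}^t=\gamma_i^t\circ\gamma_j^*+\gamma_j^{*t}\circ\gamma_i\in\Hom_\QQ(B,B^*)$. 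Combined with the trichotomy recalled in item (2) after Example \ref{example:1-motives1x1}---namely that the $(i,j)$-component of $Z'(1)$ is trivial exactly for $(i,j)\in\mathrm{NoLieBracket}$---one obtains that $Z'(1)$ is the sub-torus of $\GG_m^{nr}$ whose character lattice (modulo torsion) is spanned by the $\beta_{i,j}+\beta_{i,j}^t$ for $(i,j)\in\mathrm{LieBracket}$, whence $\dim Z'(1)=\dim_\QQ\langle\beta_{i,j}+\beta_{i,j}^t\rangle_{(i,j)\in\mathrm{LieBracket}}$.

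For $\dim Z(1)/Z'(1)$ I would use the d\'evissage via the 1-motives $M_{ij}$ \eqref{Mij} together with the additivity observed in Remark \ref{R_iR_ij}. For $(i,j)\in\mathrm{LieBracket}$ one has $\dim Z_{ij}'(1)=1$, so the $(i,j)$-coordinate of $\pi(pr_*\tR)$ is already absorbed into $Z'(1)$ and contributes nothing to the quotient; for $(i,j)\in\mathrm{NoLieBracket}$ the torsor $I^*d^*\cB_{ij}$ is trivial over $B_{ij}$, so the $(i,j)$-coordinate of $\pi(pr_*\tR)$ is the class of $\rme^{t_{ij}}$ in $\GG_m$, which is torsion iff $t_{ij}\in 2\pi\ii\QQ$ (cf.\ Example \ref{example:1-motives1x1}). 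Since $Z(1)/Z'(1)$ is by definition the smallest sub-torus of $\GG_m^{nr}/Z'(1)$ containing $\pi(pr_*\tR)$, it is the image in $\GG_m^{nr}/Z'(1)$ of the sub-torus generated by the $(i,j)$-coordinates $(i,j)\in\mathrm{NoLieBracket}$; taking logarithms gives $\dim Z(1)/Z'(1)=\dim_\QQ\langle t_{ij}\rangle_{(i,j)\in\mathrm{NoLieBracket}}$ modulo $2\pi\ii\QQ$. Plugging these three computations into \eqref{DimGalmot} yields the displayed dimension formula. The main obstacle, I expect, is the symmetrization argument in the second step: one must verify carefully that the bilinear form on $B$ obtained by restricting the Poincar\'e biextension through $(\gamma_i,\gamma_j^*)$ really corresponds (after passing to the Lie bracket) to $\beta_{i,j}+\beta_{i,j}^t$ rather than to $\beta_{i,j}$ alone, and that changing the order of the two arguments accounts for the symmetrization---this is what forces the use of LieBracket and the NoLieBracket dichotomy in exactly the form stated.
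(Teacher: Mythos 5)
Your proposal is correct and follows essentially the same route as the paper: it computes $\dim B$, $\dim Z'(1)$ and $\dim Z(1)/Z'(1)$ separately (discarding the NoLieBracket, resp.\ LieBracket, generators via the triviality criteria, and identifying the $\GG_m$-coordinate of $pr_*\tR_{ij}$ with $\rme^{t_{ij}}$ once the torsor $I^*d^*\mathcal{B}_{ij}$ is trivial) and then assembles them through \eqref{DimGalmot} and $\dim \Galmot(\cE)=\frac{4}{\dim_\QQ k}$. The only difference is that where the paper simply cites \cite[Example 4.1]{BP} for $\dim B$ and \cite[Theorem 4.2]{BP} for the symmetrized-homomorphism description of $\dim Z'(1)$, you sketch direct arguments for these two inputs (the $k$-subspace description of abelian subvarieties of $\cE^n\times\cE^{*r}$ and the symmetrization $\beta_{i,j}+\beta_{i,j}^t$), which are consistent with the cited results.
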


\begin{proof}
By \cite[Example 4.1]{BP}, $  \dim B = \dim_k < p_i,q_j>_{i,j}. $

According to \cite[Theorem 4.2]{BP} the dimension of the torus $ Z'(1)$ is equal to the dimension of the sub $\QQ$--vector space of $\mathrm{Hom}_\QQ(B,B^*) $ generated by the homomorphisms  $ \beta_{i,j}+\beta_{i,j}^t.$ But if $(i,j) \in \mathrm{NoLieBracket}$, the torus $Z'_{ij}(1)$ is trivial and so by \cite[Theorem 4.2]{BP} $\beta_{i,j}+\beta_{i,j}^t=0.$ Hence we have to consider only generators $\beta_{i,j}+\beta_{i,j}^t$ with $ (i,j) \in \mathrm{LieBracket}.$

The dimension of the torus 
$ Z(1) /Z'(1) $ is equal to the dimension of the sub $\QQ$--vector space of $\CC / 2 \pi \ii \QQ$ generated by the class of the complex number $\log \pi (pr_* {\tR})= \log \pi \big(pr_*  \big(\psi(x_i,y^\vee_j)\big)_{i,j} \big)  $  modulo $2 \pi \ii \QQ.$ 
The contributions of $\mathrm{LieBracket}$ and $\mathrm{NoLieBracket}$ to the quotient torus
$Z(1)/Z'(1)$ can be computed separately. Indeed, if
$(i,j)\in \mathrm{NoLieBracket}$, then $\beta_{ij}+\beta_{ij}^t=0,$
and hence the character $x_i\otimes y_j^\vee$ belongs to $Z'^\perp$. Therefore the corresponding
coordinate survives unchanged in the quotient by $Z'(1)$. On the other hand, the coordinates indexed
by $\mathrm{LieBracket}$ are precisely those on which the equations defining $Z'(1)$ may impose
non-trivial relations. Thus the image of $\pi(pr_*\widetilde R)$ in $Z(1)/Z'(1)$ splits into the direct
contribution of the trivial fibres and the contribution, computed modulo $Z'(1)$, of the non-trivial
fibres.
If $(i,j) \in \mathrm{NoLieBracket},$ the torus $ Z_{ij}'(1)$ is trivial, which implies that $I^*d^*\mathcal{B}_{ij} = {B}_{ij} \times \GG_m,$ and so
\[{\tR}_{ij} = pr_* {\tR}_{ij}= (P_i,Q_j, \e^{t_{ij}}).\]
Hence $\log \pi (pr_* {\tR}_{ij})= \log \pi (P_i,Q_j, \e^{t_{ij}}) =t_{ij} $ for any $ (i,j) \in \mathrm{NoLieBracket}.$
The contribution of  $\mathrm{NoLieBracket}$ is then $\dim_\QQ < t_{ij}>_{ (i,j) \in \mathrm{NoLieBracket}}$ (see also \cite[Corollary 4.8]{BP}).
According to \cite[Theorem 4.7]{BP}, the contribution of $\mathrm{LieBracket}$ is
$ \dim_{\QQ}
< \textstyle{\sum_{(i,j)\in \mathrm{LieBracket}}}	\alpha_{ij}\log( s_{ij}) >.$

The result now follows from \eqref{DimGalmot}.
\end{proof}

\begin{remark}\label{Contribution}
	The LieBracket- and NoLieBracket-contributions to
	$Z(1)/Z'(1)$ arise from distinct fibres. The former is obtained from
	the image of the trivialization point modulo the relations defining
	$Z'(1)$, whereas the latter survives unchanged in the quotient since
	the corresponding characters belong to $Z'^\perp$. Therefore these
	contributions are independent and their dimensions add.
\end{remark}

Consider the 1-motive $M=[u:\ZZ \rightarrow  G^n   ],
u(1)=(R_1, \dots, R_n ) \in G^n(\CC)$ \eqref{GPC-1-motive} defined by the complex numbers $g_2,g_3,q_j,p_i$ and $t_{ij}$ \eqref{Points}.
The field of definition of $M$ is 
\[
K:=\QQ \big(g_2,g_3,Q_j,R_{i }\big)_{ j=1, \dots, r \atop i=1, \dots,n }=\QQ \big(g_2,g_3, \wp(q_j),\wp(p_i),e^{t_{ij}}f_{q_j}(p_i) \big)_{ j=1, \dots, r \atop i=1, \dots,n },
\]
and by \cite[Proposition 2.3]{B19} and \cite[Example 5.4]{BP} its periods are
\[ \omega_{1}, \;\; \eta_{1},\;\;\omega_{2}, \;\; \eta_{2},\;\;  2 \pi \ii, \;\; p_i ,  \;\; \zeta(p_i),  \;\;q_j,  \;\;  \zeta(q_j),   \;\; t_{ij}. \]
Therefore 
\begin{equation}\label{field}
	K \big(\mathrm{periods}(M)\big) =
\end{equation}
\[ \QQ\big( g_2,g_3,  \wp(q_j),\wp(p_i), \rme^{ t_{ij}} f_{q_j}(p_i)  , \omega_{1}, \eta_{1},\omega_{2}, \eta_{2},  2 \ii \pi, p_i  ,  \zeta(p_i),  q_j,  \zeta(q_j), t_{ij}   \big)_{ j=1, \dots, r \atop i=1, \dots,n }.
\]
By the previous Theorem, we have

\begin{conjecture}[Grothendieck-André period Conjecture applied to $M$ \eqref{GPC-1-motive}]\label{GApCforM}
	For the 1-motive $M=[u:\ZZ \rightarrow  G^n   ],
	u(1)=(R_1, \dots, R_n ) \in G^n(\CC)$ \eqref{GPC-1-motive} defined by the complex numbers $q_j,p_i$ and $t_{ij}$ \eqref{Points},
	\[ \mathrm{t.d.}\,\QQ\big( g_2,g_3,  \wp(q_j),\wp(p_i), \rme^{ t_{ij}} f_{q_j}(p_i)  , \omega_{1}, \eta_{1},\omega_{2}, \eta_{2},  2 \ii \pi, p_i  ,  \zeta(p_i),  q_j,  \zeta(q_j), t_{ij}   \big)_{ j=1, \dots, r \atop i=1, \dots,n }\geq\]
\[	 \frac{4}{\dim_\QQ k} +2 \dim_k < p_i,q_j>_{i,j}+ \dim_\QQ <  \beta_{i,j}+\beta_{i,j}^t>_{  (i,j) \in \mathrm{LieBracket}} + \]
\[\dim_{\QQ}
< \textstyle{\sum_{(i,j)\in \mathrm{LieBracket}}}	\alpha_{ij}\log( s_{ij}) >+ \dim_\QQ < t_{ij}>_{ (i,j) \in \mathrm{NoLieBracket}}\]	
If the $1$-motive $M$ is defined over $\overline{\QQ},$ then equality holds.
\end{conjecture}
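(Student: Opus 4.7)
The plan is to obtain this statement as a direct specialization of the general Grothendieck--André periods Conjecture \ref{GAC} to the specific 1-motive $M$ of \eqref{GPC-1-motive}. Since the abstract Conjecture \ref{GAC} asserts $\mathrm{t.d.}\,K(\mathrm{periods}(M))\geq \dim\Galmot(M)$, with equality when $K\subseteq\oQQ$, the entire content of the explicit form is the identification of the two sides. Nothing of a transcendence-theoretic nature is being proved here; the work has already been done in the preceding sections.

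For the left-hand side, I would invoke the computation of $K(\mathrm{periods}(M))$ already recorded in \eqref{field}, whose inputs are Proposition 2.3 of \cite{B19} and Example 5.4 of \cite{BP}. This identifies the field generated over $\QQ$ by the field of definition $K=\QQ(g_2,g_3,Q_j,R_i)$ and the periods of $M$ as exactly
\[
\QQ\bigl(g_2,g_3,\wp(q_j),\wp(p_i),\rme^{t_{ij}}f_{q_j}(p_i),\omega_1,\eta_1,\omega_2,\eta_2,2\rmi\pi,p_i,\zeta(p_i),q_j,\zeta(q_j),t_{ij}\bigr),
\]
matching the field whose transcendence degree appears in the statement.

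For the right-hand side, I would simply quote Theorem \ref{Teo:dimGal(M)}, whose proof has been carried out in the preceding section by the dévissage $M\leadsto\oplus_{i,j}M_{ij}$ of \eqref{reduction}, the step-by-step analysis of $\dim B$, $\dim Z'(1)$ and $\dim Z(1)/Z'(1)$ via steps (1)--(3), and the identification of the unipotent radical using \cite[Théorème 0.1]{B03} together with the short exact sequence \eqref{eq:shortexactsequenceUR}. Substituting the resulting formula
\[
\dim\Galmot(M)=\frac{4}{\dim_\QQ k}+2\dim_k\langle p_i,q_j\rangle_{i,j}+\dim_\QQ\langle \beta_{i,j}+\beta_{i,j}^t\rangle_{\mathrm{LieBracket}}+\dim_\QQ\langle t_{ij}\rangle_{\mathrm{NoLieBracket}}
\]
into Conjecture \ref{GAC} yields the inequality in the statement.

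The only point worth noting, rather than a genuine obstacle, concerns the equality clause. Conjecture \ref{GAC} gives equality when the field of definition is contained in $\oQQ$; since here $K=\QQ(g_2,g_3,Q_j,R_i)$, this translates verbatim into the hypothesis $\QQ(g_2,g_3,Q_j,R_i)\subseteq\oQQ$ stated at the end. I would also record, in passing, that the reduction ``$K=\overline{K}$'' mentioned in the introduction is harmless for the inequality (and is formalized later in Corollary \ref{GAPconjectureClean} showing base-independence of the GAC-formulation), so no further care is needed beyond the explicit specialization just outlined.
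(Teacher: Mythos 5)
Your proposal is correct and follows exactly the paper's (implicit) derivation: the statement is obtained by specializing Conjecture \ref{GAC} to $M$, identifying the left-hand side with the period field \eqref{field} computed via \cite[Proposition 2.3]{B19} and \cite[Example 5.4]{BP}, and the right-hand side with the dimension formula of Theorem \ref{Teo:dimGal(M)}, which is precisely how the paper arrives at Conjecture \ref{GApCforM} ("Recalling \eqref{field} we have\dots"). The remark on the equality clause for $\QQ(g_2,g_3,Q_j,R_i)\subseteq\oQQ$ is likewise the verbatim translation of the equality case of Conjecture \ref{GAC}.
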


We furnish now two examples of this conjecture.
For an elliptic curve $\cE$ defined over $\CC,$ we have 
		$\dim \Galmot (\cE) = \frac{4}{\dim_\QQ k}$
		(see for example \cite[Formula (5.51)]{B02}). Hence the Grothendieck-André period Conjecture applied to $\cE$ reads
		\begin{equation*}
			\mathrm{t.d.}\, \QQ (g_2,g_3,\omega_1, \omega_2,\eta_1,\eta_2) \geqslant \frac{4}{\dim_\QQ k} =
			\begin{cases}
				2, \mathrm{if}\; \cE \;\mathrm{has\; complex\; multiplication}\\
				4, \mathrm{otherwise}.
			\end{cases} 
		\end{equation*}
		If $g_2$ and $g_3$ are algebraic, the above inequality is conjecturally an equality, and if moreover we assume complex multiplication, this equality is Chudnovsky Theorem: 
		\begin{equation}\label{Chudnovsky}
			\mathrm{t.d.}\, \QQ (\omega_1, \omega_2,\eta_1,\eta_2) =2.
		\end{equation} 
	
	 Consider the 1-motive  \eqref{GPC-1-motive} $	M=[u:\ZZ \rightarrow  G^n   ],
		u(1)=(R_1, \dots, R_n ) \in G^n(\CC),$ with the complex numbers $p_1, \dots, p_n,q_1, \dots, q_r$ \eqref{Points} satisfying the condition $\dim_k <p_i,q_j >_{i,j}=n+r.$ Then the dimension of the torus $Z'(1)$ is maximal by \cite[Corollary 4.6]{BP}, that is $ \dim Z(1)= \dim Z'(1)=nr.$ Hence
	\[
			\dim \Galmot (M)= \frac{4}{\dim_\QQ k} + 2 (n+r) + nr.
		\]
		In other words, \textit{if the dimension of $B$ is maximal}, that is $\dim_k <p_i,q_j >_{i,j}=n+r,$ \textit{the toric part $Z(1)$ of $\UR(M)$ is filled completely by the Lie bracket $[\cdot,\cdot]$} and moreover \textit{it is maximal}. Hence \textit{the dimension of $\UR (M)$ is governed only by the abelian part of $M$ and it is maximal.}
		 In this case the Grothendieck-André period Conjecture applied to this 1-motive reads
			\[ \mathrm{t.d.}\,\QQ\big(  g_2,g_3,  \wp(q_j),\wp(p_i), \rme^{ t_{ij}} f_{q_j}(p_i)  , \omega_{1}, \eta_{1},\omega_{2}, \eta_{2},  2 \ii \pi, p_i  ,  \zeta(p_i),  q_j,  \zeta(q_j), t_{ij}  \big)_{ j=1, \dots, r \atop i=1, \dots,n }\geqslant\]
		\[	 \frac{4}{\dim_\QQ k} +2 (n+r)+nr \]	
		with the lower bound completely independent of the points $t_{ij}.$ 
		%In particular, for a 1-motive  \eqref{GPC-1-motive} $	M=[u:\ZZ \rightarrow  G^n   ]$ defined by the points $p_i,q_j$ and %$t_{ij}$ \eqref{Points} such that $ \dim_k <  p_i, q_j>_{i,j}=n+r$ and $t_{ij}=0$ for any $i$ and $j,$ the Grothendieck-André %period Conjecture reads
	%	\[ \mathrm{t.d.}\,\QQ\big(  g_2,g_3,  \wp(q_j),\wp(p_i), f_{q_j}(p_i)  , \omega_{1}, \eta_{1},\omega_{2}, \eta_{2},  2 \ii \pi, %p_i  ,  \zeta(p_i),  q_j,  \zeta(q_j)  \big)_{ j=1, \dots, r \atop i=1, \dots,n }\geqslant \]
	%	\[	 \frac{4}{\dim_\QQ k} +2 (n+r)+nr .\]	

\medspace
We finish this section proving the Gro\-then\-dieck-Andr\'{e} period Conjecture for 1-motives defined by an elliptic curve with algebraic invariants and complex multiplication and by torsion points.

\begin{proposition}\label{ProofGP-torsion} 
	Let $M=[u:\ZZ \rightarrow  G^n], u(1)=(R_1, \dots, R_n)$ be the 1-motive \eqref{GPC-1-motive} defined by the torsion points 
	\begin{equation}\label{qpt}
		q_j := \frac{\omega_j'}{l_j} \qquad \qquad  p_i := \frac{\omega_i'}{m_i} \qquad \qquad  t_{ij}:=	\frac{2 k_{ij} \pi \ii }{m_i}  
	\end{equation}
	with $k_{ij},m_i, l_j\in\ZZ$, $m_i,l_j>0$, $0\leqslant k_{ij}\leqslant m_i-1$, $\omega_i', \omega_j'\in\Omega$, $\frac{\omega_i'}{m_i},\frac{\omega_j'}{l_j} \not\in\Omega$ for $j=1,\dots,r$ and $i=1, \dots,n.$ Then the Grothendieck-Andr\'{e} period Conjecture applied to $M$ reads
	\[	 \mathrm{t.d.}\, K \big(\mathrm{periods}(M)\big)= \mathrm{t.d.}\, \QQ \big(g_2,g_3, \omega_1, \omega_2,\eta_1, \eta_2\big)
	\geq \frac{4}{\dim_\QQ k} .\]
	\par\noindent Therefore, if the elliptic curve $\cE$ underlying $M$ has algebraic invariants and complex multiplication, 
	the Grothendieck-Andr\'{e} period Conjecture applied to $M$ is true.
\end{proposition}

\begin{proof}
For $j=1, \dots,r,$ we set $\alpha_j := \zeta(\frac{\omega_j'}{l_j})-\frac{\eta(\omega_j')}{l_j}.$
According to \cite[Lemma 3.1 (3)]{BW} the complex numbers $\alpha_j$ are algebraic over $\QQ(g_2,g_3).$
Consider the 1-motive
\[
M^T=[u^T:\ZZ \to  \GG_m^{nr}],\quad  u^T(1)= \Big(\rme^{\alpha_j \frac{\omega_i'}{m_i}} \Big)_{i,j} \in   \GG_m^{nr} (\CC).
\]
We proceed in three steps:

(1) First we prove that $
	 \mathrm{t.d.}\, K_{ M \oplus M^T} \big(\mathrm{periods}(M \oplus M^T)\big) = \mathrm{t.d.} \, \QQ(g_2,g_3) \big(\mathrm{periods}(\cE), \rme^{
		\alpha_j \frac{\omega_i'}{m_i}} \big)_{i,j},$ where $K_{ M \oplus M^T}$ is the field of definition of $ M \oplus M^T.$
Explicitly we have to show that the two fields
	\[
		K_{ M \oplus M^T} \big(\mathrm{periods}(M \oplus M^T)\big)=  \QQ\Big( g_2,g_3, \wp\Big(\frac{\omega_j'}{l_j}\Big),\wp\Big(\frac{\omega_i'}{m_i}\Big), \rme^{ \frac{2 k_{ij} \pi \ii}{m_i}} f_{\frac{\omega_j'}{l_j}}\Big(\frac{\omega_i'}{m_i}\Big)  , \rme^{
		\alpha_j \frac{\omega_i'}{m_i}}  , \]
		\[	 \omega_{1}, \eta_{1}, \omega_{2}, \eta_{2}, \frac{\omega_i'}{m_i} ,  \zeta\Big(\frac{\omega_i'}{m_i}\Big),  \frac{\omega_j'}{l_j},  \zeta\Big(\frac{\omega_j'}{l_j}\Big), \frac{2 k_{ij} \pi \ii}{m_i}  , \alpha_j \frac{\omega_i'}{m_i} \Big)_{i,j}
	\]
	and
	\[ \QQ\big( g_2,g_3, \omega_{1}, \eta_{1},\omega_{2}, \eta_{2}  , \rme^{
		\alpha_j \frac{\omega_i'}{m_i}}  \big)
	\]
	have the same transcendence degree over $\QQ$. But this is true because 
	for any $i,j$
	\begin{itemize}
		\item the numbers $ \wp\big(\frac{\omega_j'}{l_j}\big),\wp\big(\frac{\omega_i'}{m_i}\big)$ are algebraic over $\QQ(g_2,g_3)$ by 
		Table 0 of \S \ref{ValuesAtTorsionPoints};
		\item the numbers $ \frac{2 k_{ij} \pi \ii}{m_i}$ belong to $\QQ( \omega_1, \omega_2,\eta_1, \eta_2)$ because of Legendre relation \eqref{Equation:Legendre};
		\item the numbers $f_{\frac{\omega_j'}{l_j}}\Big(\frac{\omega_i'}{m_i}\Big)   \rme^{
			\alpha_j \frac{\omega_i'}{m_i}} $ belong to $\overline{\QQ(g_2,g_3)}$ by Corollary \ref{Cor:f_q(p)pqTorsion}\footnote{
			If the points $q_j$ parametrizing the extension $G$ are all 2-torsion points (that is $ q_j = \frac{\omega_j'}{2}$ for $j=1 \dots,r$), the numbers $\alpha_j= \zeta\Big(\frac{\omega_j'}{2}\Big)-\frac{\eta(\omega_j')}{2}$ are all zero. This implies that $	u^T(1)= \big(1, \dots,1 \big) \in   \GG_m^{nr} (\CC)$
			and the 1-motive $M^T$ plays no role in the proof of Proposition \ref{ProofGP-torsion}, which becomes much easier.
			In fact, if $l_j=2$ for any $j,$ the numbers
			$f_{\omega_j'/l_j}\Big(\frac{\omega_i'}{m_i}\Big)$ belong to $\overline{\QQ(g_2,g_3)}$ by Corollary \ref{Cor:f_q(p)qpTorsion} or by Corollary \ref{Cor:f_q(p)pqTorsion} and so we don't have to multiply them by $ \rme^{
				\alpha_j \omega_i'/m_i}$  in order to get algebraic numbers.
				};
		\item the numbers $\frac{\omega_i'}{m_i}$ and $\frac{\omega_j'}{l_j}$ belong to $\QQ( \omega_1, \omega_2);$
		\item the numbers $\zeta \big(\frac{\omega_i'}{m_i}\big) $ and $\zeta\big(\frac{\omega_j'}{l_j} \big)$ belong to $\overline{\QQ(g_2,g_3)}(\eta_1, \eta_2)$ according to Table 0 of \S \ref{ValuesAtTorsionPoints};
		\item the numbers $\alpha_j \frac{\omega_i'}{m_i}$ belong to $\overline{\QQ(g_2,g_3)}( \omega_1, \omega_2)$
		by \cite[Lemma 3.1 (3)]{BW}.
	\end{itemize}

(2) Now we show that 
\begin{align}
\nonumber	 \dim \Galmot (M) &= \dim \Galmot (\cE) =\frac{4}{\dim_\QQ k} \\
\nonumber \dim \Galmot (M \oplus M^T) &= \frac{4}{\dim_\QQ k} + \dim_\QQ <  \alpha_j \frac{\omega_i'}{m_i}>_{i,j}
\end{align}
	where $<  \alpha_j \frac{\omega_i'}{m_i}>_{i,j}$ is the sub $\QQ$--vector space of $\CC / 2 \pi \ii \QQ$ generated by the classes of the complex numbers $  \alpha_j \frac{\omega_i'}{m_i}$  modulo $2 \pi \ii \QQ.$ We get the first statement using \cite[ \S 6 0.2]{BP} and the inequality \eqref{reduction}. For the second statement, add the index $T$  to the pure motives underlying the unipotent radical of the 1-motive $M^T.$ 
	Since $M^T$ has no abelian part, according to Theorem \ref{Teo:dimGal(M)} 
	\[		\dim B_T = \dim Z_T'(1) =0 \quad  \mathrm{and}  \quad \dim Z_T(1) / Z_T'(1) = \dim_\QQ <   \alpha_j  \frac{\omega_i'}{m_i}>_{ i,j}.
	\]
	On the other hand, we have just computed that $\dim \UR(M)=0,$ and so
	\[	\dim \UR (M \oplus M^T) =  \dim \UR ( M^T) = \dim_\QQ <   \alpha_j  \frac{\omega_i'}{m_i}>_{ i,j}.
	\]

	(3) Observe that 
	\[ K \Big(\mathrm{periods}(M), \rme^{
		\alpha_j \frac{\omega_i'}{m_i}} ,\alpha_j \frac{\omega_i'}{m_i}\Big)_{i,j} = K_{ M \oplus M^T} \big(\mathrm{periods}(M \oplus M^T)\big).\] 
	Let $N$ be the dimension of the sub $\QQ$--vector space $<  \alpha_j \frac{\omega_i'}{m_i}>_{i,j}$ of $\CC / 2 \pi \ii \QQ$ generated by the classes of the complex numbers $  \alpha_j \frac{\omega_i'}{m_i}$  modulo $2 \pi \ii \QQ$ and let $\beta_1, \dots, \beta_N$ be its basis. Recalling that the numbers $\alpha_j $ are algebraic over $\QQ(g_2,g_3)$
	(see \cite[Lemma 3.1 (3)]{BW}), by \cite[Lemma 4.4]{BW} we have that the equalities
	\[	\mathrm{t.d.}\, K \big(\mathrm{periods}(M), \rme^{
		\alpha_j \frac{\omega_i'}{m_i}}, \alpha_j \frac{\omega_i'}{m_i}\big)_{i,j}= \mathrm{t.d.}\, K \big(\mathrm{periods}(M), \rme^{\beta_s}\big)_{s=1, \dots,N}\]
	and
	\[ \mathrm{t.d.} \QQ(g_2,g_3) \big(\mathrm{periods}(\cE), \rme^{
		\alpha_j \frac{\omega_i'}{m_i}}, \alpha_j \frac{\omega_i'}{m_i}\big)_{i,j} = \mathrm{t.d.} \QQ(g_2,g_3) \big(\mathrm{periods}(\cE), \rme^{\beta_s} \big)_{s=1, \dots,N}.\]
	Now the Grothendieck-Andr\'{e} period Conjecture applied to $M \oplus M^T,$ that we can made explicit thanks to step (1) and (2), furnishes 
	\[ \mathrm{t.d.}\, K \big(\mathrm{periods}(M), \rme^{\beta_s}\big)_{s=1, \dots,N} =
	 \mathrm{t.d.}\, K_{ M \oplus M^T} \big(\mathrm{periods}(M \oplus M^T)\big)=\]
	\[ \mathrm{t.d.} \QQ(g_2,g_3) \big(\mathrm{periods}(\cE),  \rme^{\beta_s}\big)_{s=1, \dots,N}\geqslant \]
	\[\frac{4}{\dim_\QQ k} + N. \]	
	Removing the $N$ numbers $\rme^{\beta_1},\dots, \rme^{\beta_N},$ we obtain 
	\[ \mathrm{t.d.}\, K \big(\mathrm{periods}(M)\big)=  \mathrm{t.d.} \QQ(g_2,g_3) \big(\mathrm{periods}(\cE) \big) \geqslant \frac{4}{\dim_\QQ k}.\]
	The last statement is just Chudnovsky Theorem.
\end{proof}

Here the sketch of another proof of the above Proposition involving the tannakian language:
	The 1-motive $M=[u:\ZZ \to  G^n] $ defined by the torsion points \eqref{qpt} is isogeneous to the 1-motive $M_0=[0:\ZZ \to  \cE^n \times \GG_m^{nr}] .$ Since tannakian categories are defined modulo isogenies, the tannakian categories generated by $M$ and by $M_0$ are isomorphic. Therefore the conjectures obtained applying the Grothendieck-Andr\'{e} period Conjecture to $M$ and to $M_0$ respectively are the same, that is
	the Grothendieck-Andr\'{e} period Conjecture applied to the 1-motive $M$ defined by the torsion points \eqref{qpt} is equivalent to the Gro\-then\-dieck-Andr\'{e} period Conjecture applied to the elliptic curve $\cE$ underlying $M.$

%------------------------------------------

\section{Equivalence between the semi-elliptic Conjecture and the Gro\-then\-dieck-Andr\'{e} period Conjecture applied to 1-motives with elliptic part}\label{Section:GPCconsequences}

%The periods of a 1-motive $M$ are the entries of the matrix which represents the isomorphism between the De Rham and the Hodge realizations of $M$ (see \cite[\S 2]{B19}).

By \cite[Corollary 3.5, Lemma 4.4]{BW}, for a 1-motive of the form $M=[u:\ZZ \rightarrow \GG_m^s \times \cE^n ], u(1) =(\rme^{t_1}, \dots, \rme^{t_s} , P_1, \dots, P_n ) ,$ the transcendence degree of the field $K(\mathrm{periods}(M)) $
does not depend on 
the choice of the bases used in order to compute the periods of $M,$ that is it does not depend on
\begin{itemize}
	\item the chosen $\QQ$-basis of the sub $\QQ$--vector space of $\CC / 2 \pi  \ii \QQ  $ generated by the classes of $t_1, \dots, t_s$  modulo $ 2 \pi  \ii \QQ,$ and
	\item the chosen $k$-basis of the sub $k$--vector space of $\CC / (\Omega \otimes_\ZZ \QQ)  $ generated by the classes of $p_1, \dots, p_n$ modulo $\Omega \otimes_\ZZ \QQ.$
\end{itemize}
Hence also the Gro\-then\-dieck-Andr\'{e} period Conjecture applied to this 1-motive  $M=[u:\ZZ \rightarrow \GG_m^s \times \cE^n ]$ does not depend on the chosen bases.

For a general 1-motive $
M=[u:\ZZ \rightarrow G^n ], u(1) =(R_1, \dots, R_n)$ \eqref{GPC-1-motive} defined by the complex numbers $p_i, q_j$ and $t_{ij}$ \eqref{Points}, this invariance property no longer holds at the level of explicit period computations. In fact, Serre functions perform very poorly with respect to change of basis, since almost all base change formulae involve an exponential term (for example, if we replace $p_i$ with $p_i+ \omega,$ where $\omega$ is a period, by \eqref{f(z+omega)} $f_{q}(p+\omega)= f_{q}(p) \rme^{\eta q -\omega\zeta(q)}.$ See also the addition formulae, the multiplication by an integer formulae and the multiplication by $\tau$ formula of \S 3)\footnote{Remark however that this exponential term is the value of the exponential map at a complex number which belong to the field $K(\mathrm{periods}(M))$.}. Nevertheless, the tannakian category generated by $M$ is independent of these choices and so also the Gro\-then\-dieck-Andr\'{e} period Conjecture applied to this 1-motive $
M=[u:\ZZ \rightarrow G^n ]$ does not depend on them.

\medspace

Consider the 1-motive $
M=[u:\ZZ \rightarrow G^n ], u(1) =(R_1, \dots, R_n)$ \eqref{GPC-1-motive} defined by the points $p_i, q_j$ and $t_{ij}$ \eqref{Points}.

\begin{notation}\label{Notation} Throughout this section, we use the following notation.

		(1) Let 
		$$p'_1, \dots ,p'_{n'}, q'_1,\dots ,q'_{r'} $$
		 be a $k$--basis of the sub $k $--vector space $<p_i,q_j>_{i,j}$ of  $\CC/(\Omega\otimes_\ZZ\QQ)$ generated by the classes of $p_1, \dots, p_n,q_1, \dots, q_r .$ For ease of notation, we assume without loss of generality that  $p_i'=p_i$ for $1\leqslant i\leqslant n'$,  $q_j'=q_j$ for $1\leqslant j\leqslant r'.$ Remark that for $i \leqslant n'$ and $j \leqslant r'$,  $(i,j) \in \mathrm{LieBracket}$ \eqref{def:LieBracket} by \cite[Corollary 4.5]{BP}.
		
		(2) Let 
		$$\gamma_1, \dots ,\gamma_{u}$$
		 be a $\QQ$--basis of the sub $\QQ $--vector space $ < \beta_{i,j}+\beta_{i,j}^t>_{  (i,j) \in \mathrm{LieBracket}}$ of $\mathrm{Hom}_\QQ(B,B^*) $ generated by the homomorphisms $\beta_{i,j}+\beta_{i,j}^t$ with $(i,j) \in \mathrm{LieBracket}.$  Without loss of generality, for $m=1, \dots,u,$ we assume $\gamma_m =\beta_{i_m,j_m}+\beta_{i_m,j_m}^t$ for some $(i_m,j_m) \in \mathrm{LieBracket}.$ 
		
		In Lemma \ref{u} we will show that $u \geqslant n' r'$ and so 
	 for $m=1, \dots,n'r',$ we may assume $ i_m =1, \dots, n'$ and  $ j_m =1, \dots, r'.$

		(3) Let
		\[
		\theta_1,\ldots,\theta_v
		\]
		be a $\QQ$--basis of the sub $\QQ$--vector space of
		$\CC/2\pi \ii\QQ$ generated by the classes of \\
		$\sum_{(i,j)\in \mathrm{LieBracket}}
		\alpha_{ij}\log(s_{ij}),$
		with $\sum_{(i,j)\in \mathrm{LieBracket}}
		\alpha_{ij}x_i\otimes y_j^\vee
		\in Z'^\perp $
		and with $(s_{ij})$ any point of $\GG_m^{nr}$ projecting onto $
		\pi\big(pr_*\big(\psi(x_i,y_j^\vee)\big)_{(i,j)\in \mathrm{LieBracket}}\big).$
		We choose this basis among the classes of the logarithms
		of the coordinates of the trivialization point
		$ \pi(pr_*\widetilde R).$
		Thus, after reindexing if necessary, for $a=1, \dots,v,$ we assume
		\[
		\theta_a=t_{i_aj_a}
		\]
		for suitable pairs $(i_a,j_a)\in \mathrm{LieBracket}.$
		The pairs $(i_a,j_a)$ are chosen so that they do
		not contribute new dimensions to $Z'(1)$ and the classes of
		$t_{i_aj_a}$ form a basis of the $\mathrm{LieBracket}$-contribution
		to $Z(1)/Z'(1)$.
		
		(4) Let 
		$$t_1, \dots ,t_{s}$$
		 be a $\QQ$--basis of the sub $\QQ $--vector space $<t_{ij}>_{(i,j) \in \mathrm{NoLieBracket}}$ of $\CC / 2\pi\ii \QQ$ generated by the classes of $t_{ij} $ with $(i,j) \in \mathrm{NoLieBracket}$ \eqref{def:NoLieBracket}.  Without loss of generality, for $l=1, \dots,s,$ we assume $t_l= t_{i_lj_l}$ for some $(i_l,j_l) \in \mathrm{NoLieBracket}.$
		
	\end{notation}

\begin{proposition}\label{prop:IndipChoiceBase}
	Let  $
	M=[u:\ZZ \rightarrow G^n ], u(1) =(R_1, \dots, R_n ), $ be the 1-motive \eqref{GPC-1-motive} defined by the complex numbers $p_i, q_j$ and $t_{ij}$ \eqref{Points}.  Denote by $M^C$ the 1-motive \eqref{GPC-1-motive} defined by the complex numbers
	$p_{i_m},q_{j_m},t_{i_mj_m}$ for $m=1,\ldots,u$,
	by the complex numbers $p_{i_a},q_{j_a},t_{i_aj_a}$ for $a=1,\ldots,v$,
	and by the complex numbers $p_{i_l},q_{j_l},t_{i_lj_l}$ for $l=1,\ldots,s.$ Then the 1-motives $M$ and $M^C$ generate the same tannakian category. 
	 
	 In particular the conjectures obtained applying the Grothendieck-Andr\'{e} period Conjecture to $M$ and to $M^C$ respectively are equivalent.
\end{proposition}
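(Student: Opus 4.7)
The plan is to prove the first claim---that $M$ and $M^C$ generate the same tannakian category---from which the second assertion follows, since the motivic Galois group (and hence the bound in Conjecture \ref{GApCforM}) depends only on the tannakian subcategory generated, and the same algebraic relations used below will identify the two ambient period fields. I would establish the equality by proving mutual inclusion.

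The starting point is the reduction already employed in Section \ref{MotGalGroup}: by \cite[Lemma 2.2]{B19}, $M$ generates the same tannakian category as $\bigoplus_{j=1}^r\bigoplus_{i=1}^n M_{ij}$, and analogously $M^C$ generates the same tannakian category as the direct sum of its atomic pieces $M_{i_m,j_m}$ for $m=1,\dots,u$ together with $M_{i_l,j_l}$ for $l=1,\dots,s$. One inclusion is then immediate: every atomic summand of $M^C$ appears literally among the summands of $\bigoplus_{i,j}M_{ij}$, so the tannakian category generated by $M^C$ is contained in that of $M$.

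For the reverse inclusion I would prove that every $M_{ij}$ lies in the tannakian category generated by $M^C$. Writing $p_i \equiv \sum_k \lambda_{ik} p'_k \pmod{\Omega\otimes_\ZZ\QQ}$ and $q_j \equiv \sum_l \mu_{jl} q'_l \pmod{\Omega\otimes_\ZZ\QQ}$ with $\lambda_{ik},\mu_{jl}\in k$, I would combine the multiplication-by-endomorphism formula for the Weierstrass $\sigma$ function (Proposition \ref{Proposition:MultiplicationFormulaSigma}), the formulas for antisymmetric endomorphisms acting on Serre functions (Propositions \ref{Proposition:PhiAntisymmetricP} and \ref{Proposition:PhiAntisymmetricQ}), and the addition and multiplication-by-integer formulas of Section \ref{Formulae}, to express the point $R_{ij}$ algebraically in terms of the points defining the atomic components of $M^C$, up to factors that are values of Weierstrass and Serre functions at torsion points. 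These torsion values lie in an algebraic closure of $\QQ(g_2,g_3)$ by Propositions \ref{Cor:f_q(p)pTorsion} and \ref{Cor:f_q(z)qTorsion} together with Corollaries \ref{Cor:f_q(p)qpTorsion} and \ref{Cor:f_q(p)pqTorsion}, so they correspond to morphisms of 1-motives over the field of definition. This exhibits $M_{ij}$ as an object of the tannakian category generated by $M^C$.

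For the toric parameter there are two sub-cases. If $(i,j)\in\mathrm{NoLieBracket}$, then $t_{ij}$ is congruent modulo $2\pi\ii\QQ$ to a $\QQ$-linear combination of the $t_l$, so $\rme^{t_{ij}}$ agrees, up to a root of unity, with a product of rational powers of the $\rme^{t_l}$; this translates to a morphism of the $\GG_m$-parts. If $(i,j)\in\mathrm{LieBracket}$, the relation $\beta_{i,j}+\beta_{i,j}^t=\sum_m a_m\gamma_m$ in $\mathrm{Hom}_\QQ(B,B^*)$, interpreted through the description \eqref{Z'(1)} of the Weil-pairing torus $Z'(1)$, shows that the $\GG_m$-contribution of $M_{ij}$ is already absorbed by the biextension data built from the selected basis pairs of $M^C$. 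Combining the two sub-cases proves the reverse inclusion, hence the equality of tannakian categories, and the same chain of algebraic relations identifies $K\bigl(\mathrm{periods}(M)\bigr)$ with $K^C\bigl(\mathrm{periods}(M^C)\bigr)$. The main obstacle in this program is the bookkeeping of the lattice translations $\omega_{p,i},\omega_{q,j}$ as they propagate through Serre functions via the quasi-periodicity \eqref{f(z+omega)}; controlling the resulting cocycle contributions is precisely the role of the torsion-point computations of Section \ref{ValuesAtTorsionPoints}.
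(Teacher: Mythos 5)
Your easy inclusion is the same as the paper's, but your reverse inclusion contains a genuine gap. To show $M_{ij}\in\;<M^C>^\otimes$ you would need to exhibit $M_{ij}$, up to isogeny, as a subquotient of tensor constructions on $M^C$, i.e.\ to produce actual morphisms of 1-motives (homomorphisms of the semi-abelian parts compatible with the maps $u$). Algebraic relations among the coordinates of the points $R_{ij}$ --- which is what the addition/multiplication formulae of Sections \ref{ValuesAtTorsionPoints}--\ref{actionEndo} give you --- do not by themselves produce such morphisms, so the sentence ``they correspond to morphisms of 1-motives over the field of definition'' is exactly the unproved step. Worse, for a pair $(i,j)\in\mathrm{LieBracket}$ that is \emph{not} among the selected $(i_m,j_m)$, the parameter $t_{ij}$ is an arbitrary complex number which does not occur in $M^C$ at all, so no identity expressing $\rme^{t_{ij}}f_{q_j}(p_i)$ in terms of the data of $M^C$ up to quantities algebraic over $\QQ(g_2,g_3)$ can hold for all choices of $t_{ij}$; the assertion that this toric contribution is ``already absorbed by the biextension data'' via \eqref{Z'(1)} is precisely the nontrivial point, and making it rigorous amounts to the computation of the unipotent radical (the statement $\dim Z_{ij}(1)/Z'_{ij}(1)=0$ for LieBracket pairs and its compatibility under direct sums), which your argument invokes but never carries out. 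The NoLieBracket case has the same defect: congruence of $t_{ij}$ with a $\QQ$-combination of the $t_l$ modulo $2\pi\ii\QQ$ must be converted into an isogeny relation between the corresponding 1-motives, not merely into a multiplicative relation among values.

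The paper avoids constructing any explicit morphisms. After the same dévissage via \cite[Lemma 2.2]{B19}, it observes that $M^C$ is a quotient of $M$, hence $<M^C>^\otimes\subseteq\;<M>^\otimes$ with a corresponding comparison of motivic Galois groups, and then proves equality of the categories by showing $\dim\Galmot(M^C)=\dim\Galmot(M)$: the reductive parts coincide, $\dim B_C=\dim B=n'+r'$ because the selected $p_{i_m},q_{j_m},p_{i_l},q_{j_l}$ span the same $k$-vector space as all the $p_i,q_j$, and the toric parts both have dimension $u+s$ by Theorem \ref{Teo:dimGal(M)} applied to $M^{LB}$ and $M^{NLB}$. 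If you want to salvage your route, you would have to replace the ``algebraic expression of $R_{ij}$'' step by this kind of group-theoretic (Mumford--Tate/unipotent-radical) argument; as written, the reverse inclusion is not established.
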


\begin{remark}
	The 1-motive $M^C$ is the smallest quotient of the 1-motive $M$, whose motivic Galois group coincides with the one of $M$, that is $\Galmot(M^C)=\Galmot(M).$
\end{remark}

\begin{proof}
  For $i=1, \dots n$ and $j=1, \dots,r$ consider the 1-motive introduced in \eqref{Mij}
$M_{ij}=[u_i:\ZZ \to  G_{j}], u(1)=R_{ij} =\exp_{G_{j}}(p_i,t_{ij}) \in G_{j}(\CC).$
By \cite[Lemma 2.2]{B19}, the 1-motive $M$ defined in \eqref{GPC-1-motive} and the 1-motive $\oplus_{j=1}^r\oplus_{i=1}^n M_{ij}$ generate the same tannakian category.
For $m=1, \dots,u,$ let $M_{i_mj_m}$ the 1-motive defined by the points $p_{i_m}, q_{j_m}$ and $t_{i_mj_m},$ 
for $a=1, \dots,v,$ let $M_{i_aj_a}$ the 1-motive defined by the points $p_{i_a}, q_{j_a}$ and $t_{i_aj_a},$ 
and finally for $l=1, \dots,s,$ let $M_{i_lj_l}$ the 1-motive defined by the points $p_{i_l}, q_{j_l}$ and $t_{i_lj_l}.$ 
By \cite[Lemma 2.2]{B19}, the 1-motive $M^C$ and the 1-motive 
$(\oplus_{m=1}^{u} M_{i_mj_m})\oplus  (\oplus_{a=1}^v M_{i_aj_a}) \oplus  (\oplus_{l=1}^s M_{i_lj_l})$
generate the same tannakian category.
Since by construction the 1-motives $M_{i_mj_m}, M_{i_aj_a}$ and $M_{i_lj_l}$ are some of the $M_{ij},$ the 1-motive 
 $ (\oplus_{m=1}^{u} M_{i_mj_m}) \oplus  (\oplus_{a=1}^v M_{i_aj_a}) \oplus  (\oplus_{l=1}^s M_{i_lj_l})$
is a quotient of the 1-motive $\oplus_{j=1}^r\oplus_{i=1}^n M_{ij},$ and so also $M^C$ is a quotient of the 1-motive $M$.
 We have then an inclusion of motivic Galois groups. 
\[ \Galmot(M^C)\hookrightarrow  \Galmot(M).\]
In order to conclude that $ M$ and $ M^C$ generate the same tannakian category, we show that the two motivic Galois groups $ \Galmot(M^C) $ and $ \Galmot(M)$ have same dimension.

Since $ \Galmot(M^C) $ and $ \Galmot(M)$ have the same reductive parts, we reduce to prove that the corresponding unipotent radicals $\UR(M^C)$ and $\UR(M)$ have same dimension. Set
\[ M^{LB-Z'} =\oplus_{m=1}^{u} M_{i_mj_m},  \qquad  M^{LB-Z/Z'} =\oplus_{a=1}^{v} M_{i_aj_a}, \qquad M^{NLB} =\oplus_{l=1}^s M_{i_lj_l}\]
so that 
\[\dim \big(\UR(M^C)\big) =\dim \big(\UR( M^{LB-Z'} \oplus  M^{LB-Z/Z'} \oplus  M^{NLB})\big) .\]
We add the index $C$ (\textit{resp}. $LB-Z'$, $LB-Z/Z'$ and $NLB$) to the pure motives underlying the Lie algebra of the unipotent radical of the 1-motive $M^C$ (\textit{resp}. $ M^{LB-Z'}$, $ M^{LB-Z/Z'}$ and $ M^{NLB}$).
Clearly
\begin{equation}\label{eq:dimAb}
\dim B = \dim_k < p_i,q_j>_{i,j}  = n'+r'= \dim_k <p_{i_m},q_{j_m},
p_{i_a},q_{j_a},p_{i_l},q_{j_l}>_{m,a,l} = \dim B_C.
\end{equation} 
Since $\gamma_m=\beta_{i_mj_m}+\beta_{i_mj_m}^t$ for $m=1, \dots,u$ 
form a basis of
$ <\beta_{ij}+\beta_{ij}^t >_{(i,j)\in \mathrm{LieBracket}},$
each pair $(i_m,j_m)$ contributes an independent dimension to
$Z'(1)$. In particular, 
$\dim Z_{i_mj_m}(1)/Z'_{i_mj_m}(1)=0,$ and so the pairs $(i_m,j_m)$  do
not contribute dimensions to $Z(1)/Z'(1)$.
Recall that the pairs $(i_a,j_a)$ are chosen in such a way that they do
not contribute new dimensions to $Z'(1)$, whereas the classes of
$t_{i_aj_a}$ form a $\QQ$--basis of the $\mathrm{LieBracket}$-contribution
to the quotient torus $Z(1)/Z'(1)$. Clearly, the pairs $(i_l,j_l)$ do not contribute
 any dimension to $Z'(1),$ whereas the  classes of
$t_{i_lj_l}$ form a $\QQ$--basis of the $\mathrm{NoLieBracket}$-contribution
to the quotient torus $Z(1)/Z'(1).$ Recalling Remark \ref{Contribution}, we have 
\begin{align}\label{eq:dimTorus}
	\dim Z_C(1)&= \dim Z_{LB-Z'}'(1) + \dim Z_{LB-Z/Z'}(1)/Z'_{LB-Z/Z'}(1) + \dim Z_{NLB}(1)/Z'_{NLB}(1) \\
\nonumber	&= \dim_\QQ < \beta_{i,j}+\beta_{i,j}^t>_{  (i,j) \in \mathrm{LieBracket}} + \dim_\QQ < \sum_{(i,j)\in \mathrm{LieBracket}}
\alpha_{ij}\log(s_{ij})> \\
\nonumber & +\dim_\QQ < t_{ij}>_{ (i,j) \in \mathrm{NoLieBracket}}  \\
\nonumber	&=  \dim Z'(1) + \dim Z(1)/Z'(1) = \dim Z(1).
\end{align}
The equalities  \eqref{eq:dimAb} and \eqref{eq:dimTorus} show that the pure motives underlying the Lie algebras of
$\UR(M^C)$ and $\UR(M)$ have the same dimensions.
Hence $\dim \UR(M^C) = \dim \UR(M).$

\end{proof}

\begin{lemma}\label{u}
	$u \geqslant n' r'.$
	
	In particular, for $m=1, \dots,n'r',$ we may assume $ i_m=1, \dots, n'$ and  $ j_m=1, \dots, r'.$ 
\end{lemma}
\begin{proof}
 Set
\[ M^{Ab} =  \oplus_{i=1}^{n'} \oplus_{j=1}^{r'} M_{ij}.\]	
In particular 
$ M^{Ab} =  M^{LB-Z'} / (\oplus_{i_m  > n' \atop j_m > r' } M_{i_mj_m}).$
For $i \leqslant n'$ and $j \leqslant r'$,  $(i,j) \in \mathrm{LieBracket}$ and so $M^{Ab}$ is a quotient of $M^{LB-Z'},$ and we have an inclusion of motivic Galois groups 
$ \Galmot(M^{Ab})\hookrightarrow  \Galmot(M^{LB-Z'})$ which induces the inequality
\begin{equation}\label{formule-u}
	 \dim Z'_{Ab}(1) \leqslant \dim Z'_{LB-Z'}(1).
\end{equation}
Since $p'_1, \dots ,p'_{n'}, q'_1,\dots ,q'_{r'} $ are $k$--linearly independent, \cite[Corollary 4.6]{BP} implies
$ \dim Z_{Ab}(1)=\dim Z_{Ab}'(1) =n'r'$ and $ \dim Z_{Ab}(1)/Z'_{Ab}(1)=0.$
By \eqref{eq:dimTorus}, the inequality \eqref{formule-u} reads $n'r' \leqslant u.$
\end{proof}

\begin{example}
	Consider the 1-motive  \eqref{Mij} $M_{ij} =[u_{ij}:\ZZ \to G_{j}], u_{ij}(1) = \exp_{G_j} (p_i,t_{ij}) \in G_j(\CC) ,$ where $\phi(P_i)=Q_j$ (or $\phi(Q_j)=P_i$) with $\phi$ a non antisymmetric homomorphism. Then $\dim B_{ij}= 1, \dim Z'_{ij}(1)=1$ and $\dim Z_{ij}(1) / Z'_{ij}(1)=0$. In particular, for this 1-motive $n'=1, r'=0, u=1$, and $M=M^{LB-Z'}.$
\end{example}

\begin{corollary}\label{GAPconjectureClean}
	 The Gro\-then\-dieck-Andr\'{e} period Conjecture applied to the 1-motive \eqref{GPC-1-motive} $
	M=[u:\ZZ \rightarrow G^n ], u(1) =(R_1, \dots, R_n ) ,$   defined by the complex numbers $p_i, q_j$ and $t_{ij}$ \eqref{Points} reads 
		\[ \mathrm{t.d.}\,\QQ\big(  g_2,g_3,  \wp(q_j),\wp(p_i), \rme^{ t_{i_mj_m}} f_{q_{j_m}}(p_{i_m}) ,  \rme^{ t_{i_aj_a}} f_{q_{j_a}}(p_{i_a}) ,\rme^{ t_{i_lj_l}} , \]
		\[ \omega_{1}, \eta_{1},\omega_{2}, \eta_{2},  p_i  ,  \zeta(p_i),  q_j,  \zeta(q_j), t_{i_mj_m}  , t_{i_aj_a},t_{i_lj_l}  \big)_{ \substack{j=1, \dots, r' \\  i=1, \dots,n' \\ m=1, \dots, u \\ a=1, \dots,v \\ l=1, \dots,s }}\]
	\[\geqslant	 \frac{4}{\dim_\QQ k} +2 (n'+r')+ u +v+s.\]	
	In particular, this conjecture is independent 
	of the choice of the bases used in order to compute the periods of $M.$
\end{corollary}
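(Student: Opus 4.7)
The plan is to apply Proposition \ref{prop:IndipChoiceBase} to replace $M$ by the smaller 1-motive $M^C$, and then to read off the two sides of the Grothendieck-André periods Conjecture for $M^C$ directly from Theorem \ref{Teo:dimGal(M)} and from the period formula \eqref{field}. Since $M$ and $M^C$ generate the same tannakian category, we have $\Galmot(M)=\Galmot(M^C)$ and the two 1-motives produce the same field of periods (up to elements algebraic over it), so the conjectures stated for $M$ and $M^C$ are verbatim equivalent.

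For the right-hand side, I would invoke Theorem \ref{Teo:dimGal(M)} applied to $M^C$. The combinatorial data defining $M^C$ is precisely the one chosen in Notation \ref{Notation}, so the abelian part contributes $\dim B_C=n'+r'$, the ``Weil pairing part'' contributes $\dim Z'_C(1)=u$ (computed as in \eqref{eq:ForU} via the basis $\gamma_1,\dots,\gamma_u$ of $<\beta_{i,j}+\beta_{i,j}^t>_{(i,j)\in \mathrm{LieBracket}}$), and the purely toric part contributes $\dim Z_C(1)/Z'_C(1)=s$ (again as in \eqref{eq:ForU}, via the basis $t_1,\dots,t_s$ of $<t_{ij}>_{(i,j)\in\mathrm{NoLieBracket}}$). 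Adding the reductive contribution $\dim\Galmot(\cE)=4/\dim_\QQ k$ (Example \ref{ExDimGalmot}(1)) and using \eqref{DimGalmot} gives
\[
\dim \Galmot(M^C)=\frac{4}{\dim_\QQ k}+2(n'+r')+u+s,
\]
exactly the lower bound announced.

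For the left-hand side, I would apply the general period computation \eqref{field} to the specific data defining $M^C$. Substituting the points $p_{i_m},q_{j_m},t_{i_m j_m}$ (for $m=1,\dots,u$) and $p_{i_l},q_{j_l},t_{i_l j_l}$ (for $l=1,\dots,s$), and observing that by Notation \ref{Notation}(1) the classes of $p_1,\dots,p_{n'},q_1,\dots,q_{r'}$ form a $k$-basis of $<p_i,q_j>_{i,j}$, one reads off exactly the list of numbers appearing in the statement. The only slightly delicate point is that $2\pi\ii$ is absent from the displayed list; this is legitimate because the Legendre relation \eqref{Equation:Legendre} expresses $2\pi\ii$ as a polynomial in $\omega_1,\omega_2,\eta_1,\eta_2$, so it does not enlarge the field.

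The second assertion (independence of the basis) is then immediate: the concrete form of the conjecture just proved involves only the integers $n',r',u,s$ and the abstractly chosen $k$-, $\QQ$- and $\QQ$-bases of the three vector spaces of Notation \ref{Notation}. Any other admissible choice produces a 1-motive tannakian-equivalent to $M^C$ (both being the ``minimal tannakian quotient'' of $M$ realising $\Galmot(M)$), hence with the same Galois group dimension and the same (up to algebraic dependencies) period field; the statement therefore depends only on $M$ itself. The only point that requires a moment of care is checking that the period list extracted from \eqref{field} for $M^C$ coincides with the one displayed in the corollary — a purely bookkeeping step, with no genuine obstacle.
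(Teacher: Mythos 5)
Your right-hand side is fine and agrees with the paper: Proposition \ref{prop:IndipChoiceBase} lets you replace $M$ by $M^C$, and Theorem \ref{Teo:dimGal(M)} gives $\dim\Galmot(M^C)=\frac{4}{\dim_\QQ k}+2(n'+r')+u+s$. The gap is in the left-hand side, which you dismiss as pure bookkeeping. The period field of $M^C$ read off from \eqref{field} contains, for each $(i_l,j_l)\in\mathrm{NoLieBracket}$, the period $\rme^{t_{i_lj_l}}f_{q_{j_l}}(p_{i_l})$, whereas the field displayed in the corollary (call it $F$) contains the bare exponential $\rme^{t_{i_lj_l}}$ instead. These two fields are not equal up to algebraic extension: when, say, $q_{j_l}=\omega/a$ is torsion of order $a\geqslant 3$, Proposition \ref{Cor:f_q(z)qTorsion} only says that $f_{q_{j_l}}(p_{i_l})\,\rme^{\alpha_{j_l}p_{i_l}}$, with $\alpha_{j_l}=\zeta(\omega/a)-\eta(\omega)/a$ algebraic over $\QQ(g_2,g_3)$, is algebraic over $\QQ(g_2,g_3,\wp(p_{i_l}),\wp'(p_{i_l}))$; the leftover factor $\rme^{\alpha_{j_l}p_{i_l}}$ (and, in the antisymmetric case, $\rme^{\beta_{j_l}}$ with $\beta_{j_l}=\frac{1}{a_{j_l}}(\omega_{j_l}\zeta(q_{j_l})-\eta(\omega_{j_l})q_{j_l})$, see Propositions \ref{Cor:f_q(p)pTorsion} and \ref{Proposition:PhiAntisymmetricQ}) is a new exponential which is not algebraic over $F$ in any obvious way. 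So swapping $\rme^{t}f_q(p)$ for $\rme^{t}$ changes the field, and GPC applied to $M^C$ alone gives a lower bound for the transcendence degree of \emph{its own} period field, not of $F$. Your only ``delicate point'' (absence of $2\pi\ii$, handled by Legendre) is the harmless part; the genuine obstacle is the one above.

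The paper bridges exactly this gap with an auxiliary motive: it introduces $M^T=[u^T:\ZZ\to\GG_m^{s}]$ whose points are the exponentials $\rme^{\alpha_{j_l}p_{i_l}}$ and $\rme^{\beta_{j_l}}$, shows (using Propositions \ref{Cor:f_q(p)pTorsion}, \ref{Cor:f_q(z)qTorsion}, \ref{Proposition:PhiAntisymmetricQ} and \cite[Corollary 3.5, Lemma 3.1]{BW}) that the period field of $M^C\oplus M^T$ has the same transcendence degree as $F(\rme^{\alpha_{j_l}p_{i_l}},\rme^{\beta_{j_l}})_l$, computes $\dim\Galmot(M^C\oplus M^T)=\frac{4}{\dim_\QQ k}+2(n'+r')+u+N$ where $N=\dim_\QQ< t_{i_lj_l},\alpha_{j_l}p_{i_l},\beta_{j_l}>_l\geqslant s$, and then, after choosing a basis $t_{i_1j_1},\dots,t_{i_sj_s},\delta_1,\dots,\delta_{N-s}$, removes the $N-s$ numbers $\rme^{\delta_o}$ to land on the bound $\frac{4}{\dim_\QQ k}+2(n'+r')+u+s$ for $F$ itself. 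Some argument of this kind is indispensable; without it your proposal does not establish the stated inequality for the displayed field.
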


\begin{proof} According to Proposition \ref{prop:IndipChoiceBase} we can work with the 1-motive $M^C$ instead of $M$. Denote by $K_{M^C}$ the field of definition of $M^C$. We have that
	\begin{equation}\label{K(periods(M^C))}
		K_{M^C} \big(\mathrm{periods}(M^C)\big) =
	\end{equation}
	\[ \QQ\big(  g_2,g_3,  \wp(q_{j_m}),\wp(p_{i_m}), \rme^{ t_{i_mj_m}} f_{q_{j_m}}(p_{i_m})  ,
	\wp(q_{j_a}),\wp(p_{i_a}), \rme^{ t_{i_aj_a}} f_{q_{j_a}}(p_{i_a}),
	\wp(q_{j_l}),\wp(p_{i_l}), \rme^{ t_{i_lj_l}} f_{q_{j_l}}(p_{i_l}),
 \]
	\[	\omega_{1}, \eta_{1},\omega_{2}, \eta_{2},   p_{i_m}  ,  \zeta(p_{i_m}), q_{j_m},  \zeta(q_{j_m}),
	 p_{i_a}  ,  \zeta(p_{i_a}), q_{j_a},  \zeta(q_{j_a}),\]
\[	p_{i_l}  ,  \zeta(p_{i_l}),  q_{j_l},  \zeta(q_{j_l}),
	t_{i_mj_m}, t_{i_aj_a} , t_{i_lj_l}  \big)_{ \substack{m=1, \dots, u\\ a=1, \dots, v \\ l=1, \dots,s}}.\]	
By definition any index $(i_l,j_l ) \in \mathrm{NoLieBracket}$ satisfies one of the following three conditions: $P_{i_l}$ and $Q_{j_l}$ are both torsion, $P_{i_l}$ or $Q_{j_l}$ is a torsion point, and $P_{i_l}$ and $Q_{j_l}$ are $k$-linearly dependent via an antisymmetric homomorphism. Since all our results involving Serre function in \S 2 and \S 4 are dual, without loss of generality

- for $l=1, \dots,s'$ with $s' \leqslant s,$ we assume $q_{j_l}=\frac{\omega_{j_l}}{a_{j_l}},$  with  $\omega_{j_l}$ a period and $a_{j_l}$ an integer, $a_{j_l} \geqslant 2,$ such that $\frac{\omega_{j_l}}{a_{j_l}} \notin \Omega$. We do not fix the point $p_{i_l}$ since we will treat the first two cases ($P_{i_l}$ and $Q_{j_l}$ are both torsion, $P_{i_l}$ or $Q_{j_l}$ is a torsion point) together; 

-for $l=s'+1, \dots,s,$ let $p_{i_l}=\phi_{i_lj_l} q_{j_l},$ with $\phi_{i_lj_l}$ antisymmetric homomorphism of $\cE.$
Since $\phi_{i_lj_l}$ is antisymmetric, it exists a period  $\omega_{j_l}$ and an integer $a_{j_l}$, $a_{j_l} \geqslant 2,$  such that $\frac{\omega_{j_l}}{a_{j_l}} \notin \Omega$ and $(\phi_{i_lj_l} + \overline{\phi_{i_lj_l}})(q_{j_l}) = \frac{\omega_{j_l}}{a_{j_l}} .$

For $l=1, \dots,s',$ set $ \alpha_{j_l} := \zeta (\frac{\omega_{j_l}}{a_{j_l}})-\frac{\eta(\omega_{j_l})}{a_{j_l}}$ and $\beta_{j_l} :=0 .$ For
 $l=s'+1, \dots,s,$ set $ \alpha_{j_l} :=0$ and
$ \beta_{j_l} := \frac{1}{a_{j_l}} (\omega_{j_l} \zeta(q_{j_l})- \eta(\omega_{j_l}) q_{j_l}) .$
By \cite[Lemma 3.1 (3)]{BW} $\alpha_{j_l}$ is algebraic over $\QQ(g_2,g_3)$ and 
by \cite[Corollary 3.5]{BW} $\beta_{j_l}$ is algebraic over the field $\QQ( g_2,g_3,\omega_1,\omega_2, \eta_1,\eta_2, p_i, \wp(p_i), \zeta(p_i), q_j, \wp(q_j), \zeta(q_j))_{\substack{j=1, \dots, r' \\ i=1, \dots, n'}}.$ 
Consider the 1-motive
\[
M^T=[u^T:\ZZ \to  \GG_m^{s}],\quad  u^T(1)= \Big( \rme^{t_{i_lj_l}}, \rme^{\alpha_{j_l} p_{i_l}} , \rme^{\beta_{j_l} } \Big)_{l=1, \dots,s} \in   \GG_m^{3s} (\CC).
\]

Set
\[ F:=\]
\[ \QQ\big(  g_2,g_3,  \wp(q_j),\wp(p_i), \rme^{ t_{i_mj_m}} f_{q_{j_m}}(p_{i_m}),\rme^{ t_{i_aj_a}} f_{q_{j_a}}(p_{i_a}), \rme^{ t_{i_lj_l}}  ,\]
\[ \omega_{1}, \eta_{1},\omega_{2}, \eta_{2},  p_i  ,  \zeta(p_i),  q_j,  \zeta(q_j), t_{i_mj_m} , t_{i_aj_a}   ,t_{i_lj_l}  \big)_{ \substack{j=1, \dots, r' \\  i=1, \dots,n' \\ m=1, \dots, u \\ a=1, \dots, v\\ l=1, \dots,s }}.\]
We proceed in three steps:

(1) We first show that $ \mathrm{t.d.}\,	K_{ M^C \oplus M^T} \big(\mathrm{periods}(M^C \oplus M^T)\big) =  \mathrm{t.d.}\, F(\rme^{\alpha_{j_l} p_{i_l}} , \rme^{\beta_{j_l} })_l, $ where 
$K_{ M^C \oplus M^T}$ is the field of definition of $M^C \oplus M^T.$ 

	We have to show that the two fields
	\begin{equation}\label{K(M^C+M^T)}
		K_{ M^C \oplus M^T} \big(\mathrm{periods}(M^C \oplus M^T)\big)=
	\end{equation}	
	\[ \QQ\big(  g_2,g_3,  \wp(q_{j_m}),\wp(p_{i_m}), \rme^{ t_{i_mj_m}} f_{q_{j_m}}(p_{i_m})  ,
	 \wp(q_{j_a}),\wp(p_{i_a}), \rme^{ t_{i_aj_a}} f_{q_{j_a}}(p_{i_a}) ,
	\wp(q_{j_l}),\wp(p_{i_l}), \rme^{ t_{i_lj_l}} f_{q_{j_l}}(p_{i_l}),\]
	\[\rme^{t_{i_lj_l}},  \rme^{\alpha_{j_l} p_{i_l}} , \rme^{\beta_{j_l} } ,  \omega_{1}, \eta_{1}, \omega_{2}, \eta_{2},  p_{i_m}  ,  \zeta(p_{i_m}), q_{j_m},  \zeta(q_{j_m}), \]
	\[ 
	 p_{i_a}  ,  \zeta(p_{i_a}), q_{j_a},  \zeta(q_{j_a}),
	p_{i_l}  ,  \zeta(p_{i_l}),  q_{j_l},  \zeta(q_{j_l}),
	t_{i_mj_m} ,t_{i_aj_a}, t_{i_lj_l} ,\alpha_{j_l} p_{i_l},  \beta_{j_l}  \Big)_{m,a,l}
	\]
	and
\[ F(\rme^{\alpha_{j_l} p_{i_l}} , \rme^{\beta_{j_l} })_l=\]
\[ \QQ\big(  g_2,g_3,  \wp(q_j),\wp(p_i), \rme^{ t_{i_mj_m}} f_{q_{j_m}}(p_{i_m}), \rme^{ t_{i_aj_a}} f_{q_{j_a}}(p_{i_a}),\rme^{ t_{i_lj_l}}  , \omega_{1}, \eta_{1},\omega_{2}, \eta_{2},\]
\[  p_i  ,  \zeta(p_i),  q_j,  \zeta(q_j), t_{i_mj_m}   , t_{i_aj_a},t_{i_lj_l},\rme^{\alpha_{j_l} p_{i_l}} , \rme^{\beta_{j_l} }  \big)_{ \substack{j=1, \dots, r' \\  i=1, \dots,n' \\ m=1, \dots, u\\ a=1, \dots, v\\ l=1, \dots,s }}.\]
	have the same transcendence degree over $\QQ$. But this is true because 
	\begin{itemize}
		\item  since $p_1, \dots ,p_{n'}, q_1,\dots ,q_{r'} $ is a $k$--basis of the vector space $<p_i,q_j>_{i,j}$,
		the numbers $p_{i_m} ,q_{j_m},p_{i_a} ,q_{j_a}, p_{i_l},  q_{j_l} $ are algebraic over $F$ for any $i_a,j_a, i_l,j_l,$ for any $i_m >n',$ and for any $j_m>r'$  (recall that  if $m=1, \dots,n'r',$ we have assumed $ i_m=1, \dots, n'$ and  $ j_m=1, \dots, r'$);
		\item  by Corollary \cite[Corollary 3.5]{BW} $\wp(q_{j_m}),\wp(p_{i_m}),\wp(q_{j_a}),\wp(p_{i_a}), \wp(q_{j_l}),\wp(p_{i_l}), \zeta(p_{i_m}),  \zeta(q_{j_m}), \\ \zeta(p_{i_a}),  \zeta(q_{j_a}), \zeta(p_{i_l}), \zeta(q_{j_l})$ are algebraic over $F$ for any $i_a,j_a,i_l,j_l,$ for any $i_m >n'$ and for any $j_m>r';$ 
		\item  let  $l=1, \dots,s'.$ The number $f_{q_{j_l}} (p_{i_l})   \rme^{\alpha_{j_l} p_{i_l}} $ is algebraic over $\QQ(g_2,g_3, \omega_1,\omega_2, \eta_1,\eta_2,p_i, \wp(p_i) , \\ \wp'(p_i), q_j,\wp(q_j) ,  \wp'(q_j))_{\substack{j=1, \dots, r' \\ i=1, \dots, n'}}$ by Proposition \ref{Cor:f_q(z)qTorsion} and by \cite[Corollary 3.5]{BW};		
		\item let  $l=s'+1, \dots,s.$ We will apply Proposition \ref{Proposition:PhiAntisymmetricQ} with $p_1=p_{i_l}= \phi_{i_lj_l}q_{j_l} $ and $p_2= \overline{\phi_{i_lj_l}}q_{j_l}$ (recall that hypothesis we have $(\phi_{i_lj_l} + \overline{\phi_{i_lj_l}})(q_{j_l}) = \frac{\omega_{j_l}}{a_{j_l}}$). Since $f_{q_{j_l}}( p_{i_l})$ belongs to $K_{ M^C \oplus M^T} \big(\mathrm{periods}(M^C \oplus M^T)\big)$, according to the last statement of Proposition \ref{Proposition:PhiAntisymmetricQ} $f_{q_{j_l}}(- p_{i_l})$ also belongs to it.
By Proposition \ref{Cor:f_q(p)pTorsion}  and by \cite[Corollary 3.5]{BW} the number $f_{q_{j_l}}(\frac{\omega_{j_l}}{a_{j_l}}) \rme^{\beta_{j_l} } $ is algebraic over $\QQ(g_2,g_3, \omega_1,\omega_2, \eta_1,\eta_2,p_i, \wp(p_i), \wp'(p_i), q_j, \wp(q_j), \wp'(q_j))_{\substack{j=1, \dots, r' \\ i=1, \dots, n'}},$	which implies according to Proposition \ref{Proposition:PhiAntisymmetricQ} (2) and to \cite[Corollary 3.5]{BW}  that also the number $f_{q_{j_l}}(\overline{\phi_{i_lj_l}}q_{j_l})  \rme^{\beta_{j_l} } $ is algebraic over this field. Finally by Proposition \ref{Proposition:PhiAntisymmetricQ} (1) 
 and \cite[Corollary 3.5]{BW} we can conclude that
 the number $f_{q_{j_l}}(p_{i_l})f_{q_{j_l}}(\overline{\phi_{i_lj_l}}q_{j_l})  \rme^{\beta_{j_l} } $ is algebraic over  $\QQ( g_2,g_3,\omega_1,\omega_2, \eta_1,\eta_2, p_i, \wp(p_i),  q_j,\wp(q_j))_{i,j};$
		\item the number $\alpha_{j_l} p_{i_l}$ is algebraic over $\QQ(g_2,g_3, p_i, q_j)_{\substack{j=1, \dots, r' \\ i=1, \dots, n'}}$ by \cite[Lemma 3.1 (3)]{BW};
	\item the number $\beta_{j_l}$ is algebraic over the field $\QQ( g_2,g_3,\omega_1,\omega_2, \eta_1,\eta_2,p_i, \wp(p_i), \zeta(p_i), q_j, \wp(q_j),\\ \zeta(q_j))_{\substack{j=1, \dots, r' \\ i=1, \dots, n'}}$ by \cite[Corollary 3.5]{BW}. 
	\end{itemize}

	(2) Now we compute the dimensions of the motivic Galois groups of $M^C \oplus M^T$ and of $M^C$.
	\par\noindent	By Theorem \ref{Teo:dimGal(M)} 
\[ \dim \Galmot (M) = \dim \Galmot (M^C) =\frac{4}{\dim_\QQ k} + 2(n'+r')+u+v+s. \]
We add the index $T$ (\textit{resp}. $CT$) to the pure motives underlying the unipotent radical of the 1-motive $M^T$ (\textit{resp}. $M^C \oplus M^T$). 
Since the 1-motive $M^T$ has no abelian part, according to Theorem \ref{Teo:dimGal(M)} 
\[		\dim B_T = \dim Z_T'(1) =0 \quad  \mathrm{and}  \quad \dim Z_T(1) / Z_T'(1) = \dim_\QQ <t_{i_lj_l}, \alpha_{j_l} p_{i_l}, \beta_{j_l}   >_{l=1, \dots,s},
\]
where  $<t_{i_lj_l}, \alpha_{j_l} p_{i_l}, \beta_{j_l} >_{l}$ is the sub $\QQ$--vector space of $\CC / 2 \pi \ii \QQ$ generated by the classes of the complex numbers $t_{i_lj_l}, \alpha_{j_l} p_{i_l}$ and $ \beta_{j_l} $  modulo $2 \pi \ii \QQ.$
Because of the equalities $\eqref{eq:dimAb}$ and \eqref{eq:dimTorus} and Remark \ref{Contribution}

\begin{align*}
	\dim B_{CT}&= \dim B_C = n'+r', \\
	\dim Z_{CT}'(1)&= \dim Z_{C}'(1) =u, \\
	\dim Z_{CT}(1)/Z_{CT}'(1)&= v+ \dim_\QQ < t_{i_lj_l} ,\alpha_{j_l} p_{i_l}, \beta_{j_l} >_{l}  .
\end{align*}
The contribution $v$
comes from the $\mathrm{LieBracket}$-part of $M^C$ and is independent of the toric contribution of
$M^T$, since the latter has no abelian part. On the other hand, the classes
$t_{i_lj_l}$
already occur in the $\mathrm{NoLieBracket}$-contribution of
$Z_C(1)/Z'_C(1)$ and therefore may overlap with the toric part of $M^T$.
Consequently, only the $\mathrm{LieBracket}$-contribution contributes an additional direct summand, whereas the $\mathrm{NoLieBracket}$-contribution must be considered together with the classes $
\alpha_{j_l}p_{i_l},  \beta_{j_l}.$
Using the short exact sequence \eqref{eq:shortexactsequenceUR} we conclude that 
	\[ \dim \Galmot (M^C \oplus M^T) = \frac{4}{\dim_\QQ k} + 2(n'+r')+u+v+  \dim_\QQ < t_{i_lj_l} ,\alpha_{j_l} p_{i_l}, \beta_{j_l} >_{l} . \]
	(3) By definition of the two fields \eqref{K(periods(M^C))} and \eqref{K(M^C+M^T)} we have that 
	\[\mathrm{t.d.}\,   K_{M^C} \big(\mathrm{periods}(M^C), \rme^{
		\alpha_{j_l} p_{i_l}} ,  \rme^{
	\beta_{j_l}} ,\alpha_{j_l} p_{i_l}, \beta_{j_l}\big)_{l=1, \dots,s} = \mathrm{t.d.}\,  K_{ M^C \oplus M^T} \big(\mathrm{periods}(M^C \oplus M^T)\big).\] 
	Let $N$ be the dimension of the sub $\QQ$--vector space $< t_{i_lj_l} ,\alpha_{j_l} p_{i_l}, \beta_{j_l} >_{l}$ of $\CC / 2 \pi \ii \QQ$ generated by the classes of the complex numbers $  t_{i_lj_l} ,\alpha_{j_l} p_{i_l}$ and $ \beta_{j_l}$  modulo $2 \pi \ii \QQ.$ Since $ \dim_\QQ < t_{i_lj_l} >_l=s$, we have that $N \geqslant s$ and without loss of generalities, we assume that $ t_{i_1j_1}, \dots  t_{i_sj_s}, \delta_1, \dots, \delta_{N-s}$ is a basis of $< t_{i_lj_l} ,\alpha_{j_l} p_{i_l}, \beta_{j_l} >_{l}$. Recalling that by \cite[Lemma 3.1 (3)]{BW} the complex number 
	 $\alpha_{j_l} p_{i_l}$ is algebraic over $\QQ(g_2,g_3, p_i,q_j)_{\substack{ j=1, \dots, r' \\ i=1, \dots, n'}}$ and 
	by \cite[Corollary 3.5]{BW} the complex number $\beta_{j_l}$ is algebraic over the field $\QQ( g_2,g_3,\omega_1,\omega_2, \eta_1,\eta_2,p_i ,\wp(p_i), \zeta(p_i), q_j, \wp(q_j), \zeta(q_j))_{\substack{j=1, \dots, r' \\ i=1, \dots, n'}},$ according to \cite[Lemma 4.4]{BW}) we have the equalities
	\[	\mathrm{t.d.}\, K_{M^C} \big(\mathrm{periods}(M^C), \rme^{
		\alpha_{j_l} p_{i_l}}, \rme^{
		\beta_{j_l}}, \alpha_{j_l} p_{i_l}, 	\beta_{j_l} \big)_{l=1,\dots,s}= \mathrm{t.d.}\, K_{M^C} \big(\mathrm{periods}(M^C), \rme^{\delta_l}\big)_{l=1, \dots,N-s}\]
	and
	\[ \mathrm{t.d.} F(\rme^{\alpha_{j_l} p_{i_l}}, \rme^{\beta_{j_l}}, \alpha_{j_l} p_{i_l}, 	\beta_{j_l} )_{l=1,\dots,s} = \mathrm{t.d.}  F(\rme^{\delta_l})_{l=1, \dots,N-s} .\]
	Now the Grothendieck-Andr\'{e} period Conjecture applied to $M^C \oplus M^T,$ that we can made explicit thanks to step (1) and (2), furnishes 
	\[ \mathrm{t.d.}\, K_{M^C} \big(\mathrm{periods}(M^C), \rme^{\delta_l}\big)_{l=1, \dots,N-s} = \mathrm{t.d.}\, K_{ M \oplus M^T} \big(\mathrm{periods}(M \oplus M^T)\big)=\mathrm{t.d.}  F(\rme^{\delta_l})_{l=1, \dots,N-s}  \geqslant \]
	\[\frac{4}{\dim_\QQ k} + 2(n'+r')+u+v+  N . \]	
	Removing the $N-s$ numbers $\rme^{\delta_1}, \dots, \rme^{\delta_{N-s}}$ we obtain 
	\[ \mathrm{t.d.}\, K_{M^C} \big(\mathrm{periods}(M^C)\big)=  \mathrm{t.d.} F \geqslant \frac{4}{\dim_\QQ k} + 2(n'+r')+u+ v+ s.\]
\end{proof}

We can now prove the main theorem of this paper:

\begin{proof}[Proof of Theorem \ref{GA<=>Schanuel}] We have to prove that the semi-elliptic Conjecture \ref{s-eSC} and the Gro\-then\-dieck-André period Conjecture applied to the 1-motive $M$ \eqref{GPC-1-motive}, made explicit in Corollary \ref{GAPconjectureClean}, are equivalent. To simplify the notation, we assume throughout this proof that no
	Cartier-dual pairs occur. The necessary correction in the presence of
	Cartier-dual pairs is accounted for by the term $-\frac{1}{2}|D|$
	appearing in Conjecture \ref{s-eSC}.

\medspace

\par\noindent
\textit{Conjecture \ref{s-eSC}  $ \Longrightarrow $ Conjecture stated in  Corollary \ref{GAPconjectureClean}.}

Consider the 1-motive $M$ \eqref{GPC-1-motive} defined by the complex numbers \eqref{Points} $q_j,p_i$ and $t_{ij}$ for $j=1,\dots,r$ and $i=1, \dots,n.$

Let $p_1, \dots ,p_{n'}, q_1,\dots ,q_{r'} $ be a basis of the sub $k $--vector space $<p_i,q_j>_{i,j}$ of  $\CC/(\Omega\otimes_\ZZ\QQ)$ generated by the classes of $p_1, \dots, p_n,q_1, \dots, q_r. $

Let $\gamma_1, \dots ,\gamma_{u}$ be a basis of the sub $\QQ $--vector space $ < \beta_{i,j}+\beta_{i,j}^t>_{  (i,j) \in \mathrm{LieBracket}}$ of $\mathrm{Hom}_\QQ(B,B^*) $ generated by the homomorphisms $\beta_{i,j}+\beta_{i,j}^t$ with $(i,j) \in \mathrm{LieBracket}.$ Recall that for $m=1, \dots,u,$ we have assumed $\gamma_m =\beta_{i_m,j_m}+\beta_{i_m,j_m}^t$ for some $(i_m,j_m) \in \mathrm{LieBracket}$, and $ i_m =1, \dots, n',$  $ j_m =1, \dots, r'$ for $m \leqslant n'r'.$

 Let $
\theta_1,\ldots,\theta_v $
be a $\QQ$--basis of the sub $\QQ$--vector space of
$\CC/2\pi \ii\QQ$ generated by the classes of
$\sum_{(i,j)\in \mathrm{LieBracket}}
\alpha_{ij}\log(s_{ij}).$
 Recall that for $a=1, \dots,v,$ we have assumed $\theta_a=t_{i_aj_a}$ for some $(i_a,j_a) \in \mathrm{LieBracket}.$

Let $t_1, \dots ,t_{s}$ be a basis of the sub $\QQ $--vector space $<t_{ij}>_{(i,j) \in \mathrm{NoLieBracket}}$ of $\CC / 2\pi\ii \QQ$ generated by the classes of $t_{ij} $ with $(i,j) \in \mathrm{NoLieBracket}$ \eqref{def:NoLieBracket}. For $l=1, \dots,s,$ we have assumed $t_l= t_{i_lj_l}$ for some $(i_l,j_l) \in \mathrm{NoLieBracket}.$ Let $N$ be the dimension of the sub $\QQ$--vector space $< t_l,t_{i_mj_m} >_{l,m}$ of $\CC / 2 \pi \ii \QQ$ generated by the classes of the complex numbers $  t_l$ and $ t_{i_mj_m} $  modulo $2 \pi \ii \QQ.$ Since $ \dim_\QQ < t_l>_l=s$, we have that $N \geqslant s$ and without loss of generalities, we may assume that $ t_1, \dots  t_s, \delta_1, \dots, \delta_{N-s}$ is a basis of $< t_l ,t_{i_mj_m} >_{l,m}$. 

We distinguishes two cases:

(1) CM case. 
Set $t_0= 2 \pi \ii $ and $q_0= \frac{\omega_1}{2}.$ Remark that $\zeta(q_{0})= \frac{\eta_1}{2}$, by \cite[Lemma 3.1 (3)]{BW} the number $\wp(q_{0})$ is algebraic over the field $\QQ (g_2,g_3),$ and finally by Lemma \ref{OmegaEta} the numbers $\omega_2$ and $\eta_2$ are algebraic over the field $k(g_2,g_3,\omega_1,\eta_1).$

Since $ t_0,t_1, \dots  t_s, \delta_1, \dots, \delta_{N-s}$ are $\QQ$--linearly independent and $p_1, \dots ,p_{n'}, q_0, q_1,\dots ,q_{r'} $ are $k$--linearly independent, the semi-elliptic Conjecture \ref{s-eSC} applied to the complex numbers $t_0,t_1, \dots  t_s,\\ \delta_1, \dots, \delta_{N-s},p_{i_m},q_0, q_{j_m} $ for $m=1, \dots,u$ and $p_{i_a}, q_{j_a}, t_{i_a j_a} $ for $a=1, \dots,v$ reads
 
 \[ \mathrm{t.d.}\,  \QQ \Big(g_2,g_3,  \wp(q_j),\wp(p_i), \rme^{t_{i_mj_m}} f_{q_{j_m}}(p_{i_m})  , \rme^{t_{i_aj_a}} f_{q_{j_a}}(p_{i_a})  ,\rme^{t_{i_lj_l}}, \omega_{1}, \eta_{1},\omega_{2}, \eta_{2},  2 \pi \ii,\]
 \[ p_i  ,  \zeta(p_i),  q_j,  \zeta(q_j) , t_{i_mj_m}, t_{i_aj_a},t_{i_lj_l},\rme^{\delta_o} \Big)_{ \substack{l=1, \dots, s \\m=1, \dots,u \\a=1, \dots,v  \\o=1, \dots, N-s \\  j=1, \dots,r' \\ i=1, \dots, n' }  }  = \]
 \[ \mathrm{t.d.}\,  \QQ \Big(t_l, \delta_o,\rme^{t_l}, \rme^{\delta_o},   g_2,g_3,q_0,q_j,p_i, \wp(q_0), \wp(q_j),\zeta(q_0), \zeta(q_j), \wp(p_i), \zeta(p_i),\]
 \[ f_{q_{j_m}}(p_{i_m})  , t_{i_aj_a} ,  \rme^{t_{i_aj_a}} f_{q_{j_a}}(p_{i_a})\Big)_{\substack{l=0, \dots, s \\m=1, \dots,u \\a=1, \dots,v  \\o=1, \dots, N-s \\  j=1, \dots,r' \\ i=1, \dots, n' } }   \geqslant \]
\[ N+1+ 2 (n'+r'+1)+ u+v -1=  \frac{4}{\dim_\QQ k}  +
\dim \UR(M) +N-s.  \]
Removing the $N-s$ complex numbers $\rme^{\delta_1}, \dots , \rme^{\delta_{N-s}},$ we conclude. 

(2) Non--CM case. 
Set $t_0= 2 \pi \ii, q_{0}= \frac{\omega_1}{2}, q_{-1}= \frac{\omega_2}{2}.$
 Observe that $\zeta(q_{0})= \frac{\eta_1}{2}, \zeta(q_{-1})= \frac{\eta_2}{2}$, and finally by \cite[Lemma 3.1 (3)]{BW} the numbers $\wp(q_{-1}), \wp(q_0)$ are algebraic over the field $\QQ (g_2,g_3).$
 
 Since $ t_0,t_1, \dots  t_s, \delta_1, \dots, \delta_{N-s}$ are $\QQ$-- linearly independent and $p_1, \dots ,p_{n'},q_{-1}, q_0, q_1,  \dots ,q_{r'} $ are $k$--linearly independent,
  the semi-elliptic Conjecture \ref{s-eSC} applied to the complex numbers $t_0,t_1, \dots , \\ t_s, \delta_1, \dots, \delta_{N-s},p_{i_m},q_{-1},q_0, q_{j_m} $ for $m=1, \dots,u$ and $p_{i_a}, q_{j_a}, t_{i_a j_a} $ for $a=1, \dots,v$ furnishes
 
 \[ \mathrm{t.d.}\,  \QQ \Big(g_2,g_3,  \wp(q_j),\wp(p_i),  \rme^{t_{i_mj_m}} f_{q_{j_m}}(p_{i_m}) ,\rme^{t_{i_aj_a}} f_{q_{j_a}}(p_{i_a})  ,\rme^{t_{i_lj_l}}, \omega_{1}, \eta_{1},\omega_{2}, \eta_{2},  2 \ii \pi,\]
 \[ p_i  ,  \zeta(p_i),  q_j,  \zeta(q_j) , t_{i_mj_m},t_{i_aj_a}, t_{i_lj_l},\rme^{\delta_o} \Big)_{ \substack{l=1, \dots, s \\m=1, \dots,u \\a=1, \dots,v \\o=1, \dots, N-s \\  j=1, \dots,r' \\ i=1, \dots, n' }  }  = \]
 \[ \mathrm{t.d.}\,  \QQ \Big(t_l, \delta_o,\rme^{t_l}, \rme^{\delta_o}, g_2,g_3,q_{-1},q_0,q_j,p_i, \wp(q_{-1}), \wp(q_0), \wp(q_j),\zeta(q_{-1}), \zeta(q_0), \zeta(q_j),\]
 \[ \wp(p_i), \zeta(p_i), f_{q_{j_m}}(p_{i_m}) , t_{i_aj_a} ,\rme^{t_{i_aj_a}} f_{q_{j_a}}(p_{i_a})  \Big)_{\substack{l=0, \dots, s \\m=1, \dots,u \\a=1, \dots,v  \\o=1, \dots, N-s \\  j=1, \dots,r' \\ i=1, \dots, n' } }   \geqslant \]
 \[ N+1+ 2 (n'+r'+2)+u+v-1 =  \frac{4}{\dim_\QQ k}  +
 \dim \UR(M) +N-s.  \]
 Removing the $N-s$ complex numbers $\rme^{\delta_1}, \dots , \rme^{\delta_{N-s}},$ we conclude.

\medspace

\par\noindent
\textit{Conjecture stated in  Corollary \ref{GAPconjectureClean}  $ \Longrightarrow $ Conjecture \ref{s-eSC}.}
Let $t_1, \dots,t_s$ be $\QQ$--linearly independent complex numbers and
	$ q_1, \dots,q_r,p_1,\dots,p_n$ be $k$--linearly independent complex numbers in $\CC \smallsetminus \Omega.$
	 Choose subsets $I\subseteq \{1,\ldots,n\}$ and $J\subseteq \{1,\ldots,r\}$ such that the classes of $
	\{p_i, q_j\}_{\substack{ i\in I \\ j\in J}}$ form a $k$--basis of the sub $k$--vector space $ <p_i,q_j>_{\substack{i=1, \dots,n \\ j=1, \dots,r}}$ of  $ \CC / (\Omega\otimes_{\ZZ}\QQ)$ generated by the classes of
	$p_1,\ldots,p_n,q_1,\ldots,q_r$ modulo $\Omega\otimes_{\ZZ}\QQ.$
	Set $\tau_p:=n-|I|$ and $\tau_q:=r-|J|.$ 
	After a suitable reordering of the points, we may assume
\[ I= \{\tau_p+1,\ldots,n \} \qquad \mathrm{and} \qquad J=\{\tau_q+1,\ldots,r \}.\]
		Hence the classes of $
	p_{\tau_p+1},\ldots,p_n,
	q_{\tau_q+1},\ldots,q_r$
	form a $k$-basis of the vector space $ <p_i,q_j>_{\substack{i=1, \dots,n \\ j=1, \dots,r}}.$
	Moreover let 	$ q_{r+1}, \dots,q_{r'},p_{n+1},\dots,p_{n'}$ be complex numbers in $\CC \smallsetminus \Omega$ such that
	\begin{align*}
		\dim_k  <p_i,q_j>_{\substack{i=1, \dots,n' \\ j=1, \dots,r'}} &= (n-\tau_p) + (r- \tau_q) ,\\
		\dim_\QQ  < \beta_{i,j}+\beta_{i,j}^t>_{\substack{i=\tau_p +1, \dots,n' \\ j=\tau_q+1, \dots,r'}} &=(n' - \tau_p)(r'- \tau_q).
	\end{align*}
Finally let	$ q_{r'+1}, \dots,q_{r''},p_{n'+1},\dots,p_{n''}$ be complex numbers in $\CC \smallsetminus \Omega$ and for $i=n'+1,\ldots,n'', j=r'+1,\ldots,r'',$ $t_{ij}$ be complex numbers such that
\begin{align*}
		\dim_k  <p_i,q_j>_{\substack{i=1, \dots,n'' \\ j=1, \dots,r''}} &= (n-\tau_p) + (r- \tau_q)\\
	\dim_\QQ  < \beta_{i,j}+\beta_{i,j}^t>_{\substack{i \in I \cup \{n +1, \dots,n''\} \\ j \in J \cup \{ r+1 , \dots,r''\}}}& = (n' - \tau_p)(r'- \tau_q),\\
	 \dim_\QQ 	< \textstyle{\sum_{\substack{ n'+1\leqslant i \leqslant n'' \\ r'+1 \leqslant j \leqslant r''}}}	\alpha_{ij}t_{ij}> & = (n''-n')(r''-r')
\end{align*}
Set
\[F:= \QQ \Big(t_l,\rme^{t_l}, g_2,g_3, q_j, p_i, \wp(q_j), \zeta(q_j),\wp(p_i), \zeta(p_i),  f_{q_{j'}}(p_{i'}), t_{i''j''} ,\rme^{t_{i''j''}} f_{q_{j''}}(p_{i''})\Big)_{\substack{l=1, \dots, s \\  j=1, \dots,r \\ i=1, \dots, n \\  j'=\tau_q+1, \dots,r' \\ i'=\tau_p+1, \dots, n' \\  j''=r', \dots,r'' \\ i''=n', \dots, n''}}.\]

If $t_1, \dots,t_s$ are $\QQ$--linearly independent complex numbers, we set
\[\tor (t_l): = \dim_\QQ \Big( ( 2 \pi \ii \QQ ) \cap \big(  \sum_{l=1}^s \QQ t_l \big) \Big) . \]

We have to find the following lower bounds for the transcendence degree of $F$ over $\QQ:$

\begin{enumerate}
	\item $s+2(r+n)+r'n' + (n''-n')(r''-r')$ if  $\tor(t_l)= \tor (p_i,q_j)=0, $
	\item $s+ 2(n+r) +n' (r'- 1)+ (n''-n')(r''-r')$ if $\tor(t_l)=\tau_p=0$ and  $\tau_q=1,$
	\item  $s+ 2(n+r) + r'(n'-1)+ (n''-n')(r''-r')$ if $\tor(t_l)= \tau_q=0$ and  $\tau_p=1,$
	\item $s+ 2(n+r) +(n'-1)(r'-1)+ (n''-n')(r''-r')$ if $\tor(t_l)= 0,$ and $\tau_q=\tau_p=1,$
	\item  $s+ 2(n+r) +n'(r'-2)+ (n''-n')(r''-r')$ if $\tor(t_l)= 0$ and $\tau_q=2,$
	\item  $s+ 2(n+r) + r'(n'-2)+ (n''-n')(r''-r')$ if $\tor(t_l)= 0$ and  $\tau_p=2,$
	
	%7
	\item $s+ 2(n+r) +n'r'+ (n''-n')(r''-r')$ if $\tor(t_l)= 1$,  $\tor (p_i,q_j)=0,$
	
	\item 
	\begin{enumerate}
		\item 	 $s+ 2(n+r) +n'(r'-1)+ (n''-n')(r''-r')-1$ if $\tor(t_l)=\tau_q=1$, $\tau_p=0,$ and $\cE$ is CM,
		\item 	 $s+ 2(n+r) +n'(r'-1)+ (n''-n')(r''-r')$ if $\tor(t_l)=\tau_q=1$, $	\tau_p=0,$ and $\cE$ is non--CM,
	\end{enumerate}

	\item 
	\begin{enumerate}
		\item 	 $s+ 2(n+r) +r'(n'-1)+ (n''-n')(r''-r')-1$ if $\tor(t_l)=\tau_p=1$, $\tau_q=0,$ and $\cE$ is CM,
		\item 	 $s+ 2(n+r) +r'(n'-1)+ (n''-n')(r''-r')$ if $\tor(t_l)=\tau_p=1$, $	\tau_q=0,$ and $\cE$ is non--CM,
	\end{enumerate}
	
	%10
	
	\item  $s+ 2(n+r) +(n'-1)(r'-1)+ (n''-n')(r''-r')-1 $ if $\tor(t_l)= \tau_q=\tau_p=1,$			
	\item  $s+ 2(n+r) +n'(r'-2) + (n''-n')(r''-r')-1$ if $\tor(t_l)= 1$,  $\tau_q=2,$
	\item $s+ 2(n+r) +r'(n'-2)+ (n''-n')(r''-r')-1$ if $\tor(t_l)= 1,$   $	\tau_p=2.$
	
\end{enumerate}

The cases \textit{(2)} and \textit{(3)}, \textit{(5)} and \textit{(6)},\textit{ (8)} and \textit{(9)}, \textit{(11)} and \textit{(12)} are dual in the complex numbers $p_i$ and $q_j$. The cases \textit{(4),(5),(6),(10), (11)} and \textit{(12)} appear only if the elliptic curve is non--CM. Moreover remark that $ 2 \pi \ii \subset \sum_l \QQ t_l$ and $\Omega \subset \sum_i  k p_i + \sum_j k q_j$ in the cases \textit{(8) (a), (9)(a), (10),(11)} and \textit{(12)}.

\vskip 0.4 true cm

We consider two cases for the numbers $t_1, \dots,t_s$, denoted $(T)$ and ($T^\star$), with some assumptions that will introduce no loss of generality:

-  Case ($T$): $\tor (t_l)=1.$ Let $t_s =2\pi\rmi$ and $\dim_\QQ < t_1,\dots,t_{s-1} > =s-1.$ Notice that $F$ contains $2\pi\rmi.$

- Case ($T^\star$): $\tor (t_l)=0.$
 
 \medspace
 
We consider six cases for the numbers $ q_1, \dots,q_r,p_1,\dots,p_n$ denoted ($PQ$), ($PQ^\star$), ($P^\star Q$), ($\tilde{P}Q^\star$), ($P^\star \tilde{Q}$) and ($P^\star Q^\star$), with again some harmless assumptions:

-  Case ($P Q^\star$):  (only if $\cE$ is non--CM) $\tau_p=2,$ 
$p_1=\frac{\omega_1}{2}$, $p_{2}=\frac{\omega_2}{2} $ and $\dim_k <p_3, \dots, p_{n}, q_1, \dots,  \\ q_r>  =(n-2)+r.$
Notice that $\Omega\subset k p_1+\cdots+k p_n$  and  $F (\omega_{1}, \eta_{1},\omega_{2}, \eta_{2}) =  F.$

- Case ($P^\star Q$):  (only if $\cE$ is non--CM) $\tau_q=2,$ $q_{1}=\frac{\omega_1}{2}$, $q_{2}=\frac{\omega_2}{2} $ and $\dim_k <p_1, \dots, p_n, q_3, \dots, q_{r}> \\ = n+(r-2).$ Notice that $\Omega\subset k q_1+\cdots+k q_r$  and  $ F (\omega_{1}, \eta_{1},\omega_{2}, \eta_{2}) =  F.$

- Case ($PQ$):  (only if $\cE$ is non--CM) $\tau_p=\tau_q=1,$ $p_1=\frac{\omega_1}{2}$, $q_1=\frac{\omega_2}{2} $ and $\dim_k <p_2, \dots, p_{n}, q_2, \dots, \\ q_{r}> =(n-1)+(r-1).$  Notice that $\Omega\subset k p_1+\cdots+k p_n+ k q_1+\cdots+k q_r$  and  $F (\omega_{1}, \eta_{1},\omega_{2}, \eta_{2}) =  F.$

- Case $(\tilde{P}Q^\star):$ $\tau_p=1$ and  $\tau_q=0,$ $p_1=\frac{\omega_1}{2}$ and $\dim_k<p_2, \dots, p_{n}, q_1, \dots, q_{r}>=(n-1)+r.$ 
Notice that $F (\omega_{1}, \eta_{1},\omega_{2}, \eta_{2}) =  F (\omega_{2}, \eta_{2}).$

- Case $(P^\star\tilde{Q}):$ $\tau_q=1$ and  $\tau_p=0,$ $q_1=\frac{\omega_1}{2}$ and $\dim_k<p_1, \dots, p_{n}, q_2, \dots, q_{r}>= n+(r-1).$ 
Notice that  $F (\omega_{1}, \eta_{1},\omega_{2}, \eta_{2}) =  F (\omega_{2}, \eta_{2}).$ 

- Case $(P^\star Q^\star):$ $\tor (p_i,q_j)=0.$

\vskip 0.4 true cm
	Consider the 1-motive $M=[u:\ZZ \longrightarrow  G^{n} ],  u(1)=(R_1, \dots, R_{n})  \in G^{n}(\CC),$ defined by the points $p_{i'},q_{j'},t_{i'j'}=0$ for $j'=\tau_q+1, \dots,r'$ and $ i'=\tau_p+1, \dots, n'  ,$ and
	 $ p_{i''}, q_{j''}, t_{i''j''} $
	 for $j''= r', \dots,r''$ and $ i''= n', \dots, n''. $
 By  Corollary \ref{GAPconjectureClean} the field \eqref{field} generated by the periods of $M$ over the field of definition of $M$ is
	\begin{equation}\label{periodesM}
		K \big(\mathrm{periods}(M)\big) =
	\end{equation}
	\[\QQ\Big( g_2,g_3,  \wp(q_j),\wp(p_i),  f_{q_{j'}}(p_{i'}) ,\rme^{t_{i''j''}} f_{q_{j''}}(p_{i''}), \omega_{1}, \eta_{1},\omega_{2}, \eta_{2},  2 \ii \pi, p_i  ,  \zeta(p_i),  q_j,  \zeta(q_j) , t_{i''j''}  \Big)_{ \substack{j=\tau_q+1, \dots, r \\  i=\tau_p+1, \dots,n \\j'=\tau_q+1, \dots, r' \\  i'=\tau_p+1, \dots,n'\\j''= r', \dots, r'' \\  i''= n', \dots,n''} },
	\]
	and concerning the pure motives underlying the unipotent radical of $M,$ we have
	\begin{align}\label{pureM}
		\dim_k B&= (n-\tau_p) + (r- \tau_q), \\
	\nonumber 	\dim_\QQ Z(1)&= \dim_\QQ Z'(1)= (n'-\tau_p)( r'- \tau_q),\\
	\nonumber 	\dim_\QQ Z(1)/Z'(1) &= (n''-n')(r''-r').
	\end{align}

	Consider the 1-motive
	\[
	M^T=[u^T:\ZZ \to  \GG_m^{s}],\quad  u^T(1)= (\rme^{t_1}, \dots ,\rme^{t_s} ) \in   \GG_m^{s} (\CC).
	\]
Adding the index $T$ to the pure motives underlying $\UR(M^T)$, we have 
	\begin{align}\label{pureM^T}
		\dim_k B_T&= 0, \\
	\nonumber 	\dim_\QQ Z'_T(1)&= 0,\\
	\nonumber 	\dim_\QQ Z_T(1)&= \dim_\QQ Z_T(1)/Z'_T(1) = \dim_\QQ <  t_1, \dots, t_s  > ,
	\end{align}
	where  $< t_l   >_{l=1, \dots,s}$ is the sub $\QQ$--vector space of $\CC / 2 \pi \ii \QQ$ generated by the classes of the complex numbers $ t_1, \dots, t_s $  modulo $2 \pi \ii \QQ.$

	If $K_{ M \oplus M^T}$ denotes the field of definition of the direct sum $M \oplus M^T,$ observe that
	\begin{equation}\label{periodsMM^T}
	K_{ M \oplus M^T}\big(\mathrm{periods}(M \oplus M^T)\big) = \QQ\Big( g_2,g_3,  \wp(q_j),\wp(p_i),  f_{q_{j'}}(p_{i'}) ,\rme^{t_{i''j''}} f_{q_{j''}}(p_{i''}), \rme^{t_l} ,
\end{equation}
	\[ \omega_{1}, \eta_{1},\omega_{2}, \eta_{2},  2 \ii \pi, p_i  ,  \zeta(p_i),  q_j,  \zeta(q_j) , t_{i''j''},t_l  \Big)_{ \substack{j= \tau_q+1, \dots, r \\  i=\tau_p+1, \dots,n \\j'=\tau_q+1, \dots, r' \\  i'=\tau_p+1, \dots,n' \\l=1, \dots,s \\j''= r', \dots, r'' \\  i''= n', \dots,n''} }.
\]
We add the index $-T$ to the pure motives underlying  $\UR(M \oplus M^T).$
 Since by Remark \ref{Contribution} the contributions of the tori underlying the Lie algebra of the unipotent radicals $\UR(M)$ and $\UR(M^T)$ are independent, from equalities \eqref{pureM} and \eqref{pureM^T} we conclude that 
\begin{align*}
	\dim B_{-T}&= \dim B = (n-\tau_p) + (r- \tau_q), \\
	\dim Z_{-T}'(1)&= \dim Z'(1) =(n'- \tau_p)( r'- \tau_q), \\
	\dim Z_{-T}(1)/Z_{-T}'(1)&= (n''-n')(r''-r')+ \dim_\QQ <  t_1, \dots, t_s  >.
\end{align*}	
	Hence by the short exact sequence \eqref{eq:shortexactsequenceUR} we conclude that 
\begin{equation} \label{dimGmotMM^T}
	\dim \Galmot (M \oplus M^T) = 
\end{equation}
\[	 \frac{4}{\dim_\QQ k} + 2\big(  (n-\tau_p) + (r- \tau_q) \big) +(n'-\tau_p)( r'- \tau_q)+ \dim_\QQ <  t_1, \dots, t_s  >. \]
Applying the Grothendieck-Andr\'{e} period Conjecture to the 1-motive $M \oplus M^T,$ that we can made explicit thanks to \eqref{periodsMM^T} and to \eqref{dimGmotMM^T}, we get
\begin{equation}\label{GACMM^T}
	\QQ\Big( g_2,g_3,  \wp(q_j),\wp(p_i),  f_{q_{j'}}(p_{i'}) ,\rme^{t_{i''j''}} f_{q_{j''}}(p_{i''}), \rme^{t_l} ,
\end{equation}
\[ \omega_{1}, \eta_{1},\omega_{2}, \eta_{2},  2 \ii \pi, p_i  ,  \zeta(p_i),  q_j,  \zeta(q_j) ,t_{i''j''}, t_l  \Big)_{ \substack{j=\tau_q+1, \dots, r \\  i=\tau_p+1, \dots,n \\j'=\tau_q+1, \dots, r' \\  i'=\tau_p+1, \dots,n' \\l=1, \dots,s\\j''= r', \dots, r'' \\  i''= n', \dots,n''} }\geqslant\]
\[\frac{4}{\dim_\QQ k} + 2\big( (n-\tau_p) + (r- \tau_q)\big) +(n'-\tau_p)( r'- \tau_q)+ (n''-n')(r''-r')+ \dim_\QQ <  t_1, \dots, t_s  > . \]

We now consider each of the twelve cases:

\vskip 0.3truecm
\par
(1) ($T^\star P^\star Q^\star$): Legendre relation \eqref{Equation:Legendre} implies that
\[
	\mathrm{t.d.}\,K_{ M \oplus M^T} \big(\mathrm{periods}(M \oplus M^T)\big)=  \mathrm{t.d.} F (\omega_{1}, \eta_{1},\omega_{2}, \eta_{2}).
\]
 We distinguished two cases:

1.1) CM case: By Lemma \ref{OmegaEta} the numbers $\omega_2$ and $\eta_2$ are algebraic over the field $k(g_2,g_3,\omega_1,\eta_1),$ and so the inequality \eqref{GACMM^T} furnishes
\begin{align*} \mathrm{t.d.} F  (\omega_1, \eta_1)=\mathrm{t.d.} F (\omega_{1}, \eta_{1},\omega_{2}, \eta_{2}) &= \mathrm{t.d.}\, K_{ M \oplus M^T} \big(\mathrm{periods}(M \oplus M^T)\big) \\
	&=
2 + 2(n+r) +n'r'+ (n''-n')(r''-r')+ s . 
\end{align*}
Removing the two numbers $\omega_1, \eta_1,$ we get the expected result.

1.2) non--CM case: From the inequality \eqref{GACMM^T} we get
\begin{align*} \mathrm{t.d.} F (\omega_{1}, \eta_{1},\omega_{2}, \eta_{2}) &=
 \mathrm{t.d.}\, K_{ M \oplus M^T} \big(\mathrm{periods}(M \oplus M^T)\big)\\
 &=4+ 2(n+r) +n'r'+ (n''-n')(r''-r')+ s .
\end{align*}
Removing the four numbers $\omega_{1}, \eta_{1},\omega_{2}, \eta_{2}$, we conclude.

\vskip 0.3truecm
\par 
(2) ($T^\star P^\star\tilde{Q}$): $ q_1=\frac{\omega_1}{2}$ and $ F (\omega_{1}, \eta_{1},\omega_{2}, \eta_{2}) =  F (\omega_{2}, \eta_{2}).$ 
 The number $\wp(q_1)$ is algebraic over the field $\QQ (g_2,g_3)$ and $\zeta(q_1)= \frac{\eta_1}{2} \in \overline{\QQ(g_2,g_3)}(\eta_1,\eta_2)$ by \cite[Lemma 3.1 (3)]{BW}. According to Proposition \ref{Cor:f_q(z)qTorsion}, for $i=1, \dots, n,$ the number $f_{q_1}(p_i)$ belongs to $\overline{\QQ (g_2,g_3)} ( \wp(p_i),\wp'(p_i)).$ We distinguished two cases:

2.1) CM case:  By Lemma \ref{OmegaEta} the numbers $\omega_2$ and $\eta_2$ are algebraic over the field $k(g_2,g_3,\omega_1,\eta_1).$ Hence from Legendre relation \eqref{Equation:Legendre} and the inequality \eqref{GACMM^T} we obtain
\[ \mathrm{t.d.} F = \mathrm{t.d.} F (\omega_{2}, \eta_{2}) = 	\QQ\Big( t_l,\rme^{t_l}, g_2,g_3,\omega_1, q_j, p_i,\]
\[ \wp\Big(\frac{\omega_1}{2}\Big), \wp(q_j),\eta_1, \zeta(q_j), \wp(p_i), \zeta(p_i),  f_{\frac{\omega_1}{2}}(p_{i'}), f_{q_{j'}}(p_{i'}), \rme^{t_{i''j''}} f_{q_{j''}}(p_{i''}),t_{i''j''}, \omega_{2}, \eta_{2}\Big)_{ \substack{j=2, \dots, r \\  i=1, \dots,n \\j'=2, \dots, r' \\  i'=1, \dots,n' \\l=1, \dots,s \\j''= r', \dots, r'' \\  i''= n', \dots,n''}}   \geqslant \]
\[ 2 + 2[n+(r-1)] +n'(r'-1)+ (n''-n')(r''-r')+s = 2(n+r) +n'(r'-1)+ (n''-n')(r''-r')+s .\]

2.2) non--CM case: From Legendre relation \eqref{Equation:Legendre}  and the inequality \eqref{GACMM^T} we get
\[ \mathrm{t.d.} F (\omega_{2}, \eta_{2}) = 	\QQ\Big( t_l,\rme^{t_l}, g_2,g_3,\omega_1, q_j, p_i, \]
\[	 \wp\Big(\frac{\omega_1}{2}\Big), \wp(q_j),\eta_1, \zeta(q_j), \wp(p_i), \zeta(p_i),  f_{\frac{\omega_1}{2}}(p_{i'}), f_{q_{j'}}(p_{i'}),\rme^{t_{i''j''}} f_{q_{j''}}(p_{i''}),t_{i''j''},  \omega_{2}, \eta_{2}\Big)_{ \substack{j=2, \dots, r \\  i=1, \dots,n \\j'=2, \dots, r' \\  i'=1, \dots,n' \\l=1, \dots,s \\j''= r', \dots, r'' \\  i''= n', \dots,n''}}  \geqslant \]
\[ 4 + 2[n+(r-1)] +n'(r'-1)+(n''-n')(r''-r')+ s = 2+ 2(n+r) +n'(r'-1)+ (n''-n')(r''-r')+s .\]
 Removing the two numbers $\omega_{2}, \eta_{2},$ we conclude. 
 
 \vskip 0.3truecm
\par 
(3) ($T^\star \tilde{P} Q^\star$): $ p_1=\frac{\omega_1}{2}$ and $ F (\omega_{1}, \eta_{1},\omega_{2}, \eta_{2}) =  F (\omega_{2}, \eta_{2}).$ 
 It is the dual case of ($T^\star P^\star\tilde{Q}$) that we left to the reader (Hint: use Lemma \ref{Cor:f_q(p)pTorsion} and applied the Grothendieck-Andr\'{e} period Conjecture to the 1-motive $M \oplus M^T \oplus M^{T'},$ where $	M^{T'}=[u^{'T}:\ZZ \to  \GG_m^{r'}], u^{T'}(1)= (\rme^{\frac{1}{2}(\omega_1 \zeta(q_{j'})-\eta_1 q_{j'})})_{j'} \in   \GG_m^{r'} (\CC)$).

\vskip 0.3truecm
\par 
(4) ($T^\star P Q$): (only if $\cE$ is non--CM)  $ p_1=\frac{\omega_1}{2} ,q_1=\frac{\omega_2}{2}$ and $ F (\omega_{1}, \eta_{1},\omega_{2}, \eta_{2}) =  F.$ 
For $j'=2, \dots, r',$ set $\alpha_{j'}:= \frac{1}{2}(\omega_1 \zeta(q_{j'})-\eta_1 q_{j'}).$ Consider the 1-motive
\[
	M^{T'}=[u^{'T}:\ZZ \to  \GG_m^r],\quad  u^{T'}(1)= (\rme^{\alpha_{j'}})_{{j'}=2, \dots, r'} \in   \GG_m^{r'-1} (\CC).
\]
Adding the index $T'$ to the pure motives underlying $\UR(M^{T'})$, we have 
\begin{align}\label{pureM^T'}
	\dim_k B_{T'}&= 0, \\
	\nonumber 	\dim_\QQ Z'_{T'}(1)&= 0,\\
	\nonumber 	\dim_\QQ Z_{T'}(1)&= \dim_\QQ Z_{T'}(1)/Z'_{T'}(1) = \dim_\QQ <\omega_1 \zeta(q_{j'})-\eta_1 q_{j'}  >_{j'=1, \dots, r'},\end{align}
where  $< \omega_1 \zeta(q_{j'})-\eta_1 q_{j'}  >_{j'}$ is the sub $\QQ$--vector space of $\CC / 2 \pi \ii \QQ$ generated by the classes of $\omega_1 \zeta(q_{j'})-\eta_1 q_{j'} $  modulo $2 \pi \ii \QQ.$

 The field of  definition of the direct sum $M \oplus M^T \oplus M^{T'}$ is
\begin{equation}\label{periodsMM^TM^T'}
	K_{ M \oplus M^T \oplus M^{T'}}\big(\mathrm{periods}(M \oplus M^T\oplus M^{T'})\big) = \QQ\Big( g_2,g_3,  \wp(q_j),\wp(p_i),  f_{q_{j'}}(p_{i'}) ,  \rme^{t_{i''j''}} f_{q_{j''}}(p_{i''}),\rme^{t_l} , \rme^{\alpha_{j'}},
\end{equation}
\[ \omega_{1}, \eta_{1},\omega_{2}, \eta_{2},  2 \pi \ii, p_i  ,  \zeta(p_i),  q_j,  \zeta(q_j) , t_{i''j''}, t_l,\alpha_{j'}  \Big)_{ \substack{j=2, \dots, r \\  i=2, \dots,n \\j'=2, \dots, r' \\  i'=2, \dots,n' \\l=1, \dots,s \\j''= r', \dots, r'' \\  i''= n', \dots,n''}} .
\]
We add the index $-TT'$ to the pure motives underlying  $\UR(M \oplus M^T\oplus M^{T'}).$
From equalities \eqref{pureM} and \eqref{pureM^T} and Remark \ref{Contribution} we conclude that 
\begin{align*}
	\dim B_{-TT'}&= \dim B = n-1+r-1, \\
	\dim Z_{-TT'}'(1)&= \dim Z'(1) =(n'-1)(r'-1), \\
	\dim Z_{-TT'}(1)/Z_{-TT'}'(1)&= (n''-n')(r''-r')+ \dim_\QQ < t_l,\frac{1}{2}(\omega_1 \zeta(q_{j'})-\eta_1 q_{j'})  >_{j',l} .
\end{align*}	
Therefore the short exact sequence \eqref{eq:shortexactsequenceUR} yields
\begin{equation} \label{dimGmotMM^TM^T'}
	\dim \Galmot (M \oplus M^T \oplus M^{T'}) =  \frac{4}{\dim_\QQ k} + 2(n-1+r-1) +(n'-1)(r'-1)+
\end{equation}
\[ (n''-n')(r''-r')+ \dim_\QQ < t_l,\alpha_{j'}  >_{j',l}.\]
Applying the Grothendieck-Andr\'{e} period Conjecture to the 1-motive $M \oplus M^T \oplus M^{T'},$ that we can made explicit thanks to \eqref{periodsMM^TM^T'} and to \eqref{dimGmotMM^TM^T'}, we get
\begin{equation}\label{GACMM^TM^T'}
	\mathrm{t.d.} 
	\QQ\Big( g_2,g_3,  \wp(q_j),\wp(p_i),  f_{q_{j'}}(p_{i'}) ,\rme^{t_{i''j''}} f_{q_{j''}}(p_{i''}), \rme^{t_l} , \rme^{\alpha_{j'}}, 
\end{equation}
\[\omega_{1}, \eta_{1},\omega_{2}, \eta_{2},  2 \pi \ii, p_i  ,  \zeta(p_i),  q_j,  \zeta(q_j) , t_{i''j''}, t_l,\alpha_{j'}  \Big)_{ \substack{j=2, \dots, r \\  i=2, \dots,n \\j'=2, \dots, r' \\  i'=2, \dots,n' \\l=1, \dots,s \\j''= r', \dots, r'' \\  i''= n', \dots,n''}} \geqslant\]
\[  \frac{4}{\dim_\QQ k} + 2(n-1+r-1) +(n'-1)(r'-1)+ (n''-n')(r''-r')+  \dim_\QQ < t_l,\alpha_{j'}  >_{l,j'} . \]

	Let $N$ be the dimension of the sub $\QQ$--vector space $< t_l,\alpha_{j'} >_{l,j'}$ of $\CC / 2 \pi \ii \QQ$ generated by the classes of the complex numbers $  t_l$ and $ \alpha_{j'}$  modulo $2 \pi \ii \QQ.$ Since $ \dim_\QQ < t_l>_l=s$, we have that $N \geqslant s$ and without loss of generalities, we may assume that $ t_1, \dots  t_s, \delta_1, \dots, \delta_{N-s}$ is a basis of $< t_l ,\alpha_{j'}  >_{l,j'}$. 

The numbers $\wp(p_1)$ and $\wp(q_1)$ are algebraic over the field $\QQ (g_2,g_3),$ and $\zeta(p_1)= \frac{\eta_1}{2}$ and $\zeta(q_1)= \frac{\eta_2}{2} $ belong to the field $ \overline{\QQ(g_2,g_3)}(\eta_1,\eta_2)$ by \cite[Lemma 3.1 (3)]{BW}. According to Proposition \ref{Cor:f_q(z)qTorsion}, for $i'=2, \dots, n',$ the numbers $f_{q_1}(p_{i'})$ belongs to $\overline{\QQ (g_2,g_3)} ( \wp(p_{i'}),\wp'(p_{i'})).$ Proposition \ref{Cor:f_q(p)pTorsion} implies that for $j'=2, \dots, r',$ the numbers $f_{q_{j'}}(p_1) \rme^{\alpha_{j'}} $ belongs to $\overline{\QQ (g_2,g_3)} ( \wp(q_{j'}),\wp'(q_{j'})).$
 By Corollary \ref{Cor:f_q(p)pqTorsion} or Corollary \ref{Cor:f_q(p)qpTorsion} the number  $f_{q_1}(p_1)$ belongs to $\overline{\QQ (g_2,g_3)}.$	
 From Legendre relation \eqref{Equation:Legendre}, \cite[Lemma4.5]{BW} and the inequality \eqref{GACMM^TM^T'} we get
\[ \mathrm{t.d.} F (\rme^{\delta_l})_{l=1, \dots,N-s} =	\mathrm{t.d.}\, \QQ \Big(t_l,\rme^{t_l},\rme^{\delta_o}, g_2,g_3, \omega_2,q_j, \omega_1,p_i, \wp\Big(\frac{\omega_2}{2}\Big),  \wp(q_j),\eta_2,  \zeta(q_j), \wp\Big(\frac{\omega_1}{2}\Big), \wp(p_i), \]
\[ \eta_1, \zeta(p_i),f_{\frac{\omega_2}{2}}\Big(\frac{\omega_1}{2} \Big),f_{\frac{\omega_2}{2}}(p_{i'}), f_{q_{j'}}\Big(\frac{\omega_1}{2}\Big), f_{q_{j'}}(p_{i'}),\rme^{t_{i''j''}} f_{q_{j''}}(p_{i''}),t_{i''j''} \Big)_{ \substack{j=2, \dots, r \\  i=2, \dots,n \\j'=2, \dots, r' \\  i'=2, \dots,n' \\l=1, \dots,s \\ o=1, \dots, N-s \\j''= r', \dots, r'' \\  i''= n', \dots,n''}}   \geqslant \]
\[ 4 + 2[n-1+r-1] +(n'-1)(r'-1)+ (n''-n')(r''-r')+N =\]
\[  2(n+r) +(n'-1)(r'-1)+ (n''-n')(r''-r')+s +N -s.\]
Removing the $N-s$ numbers $\rme^{\delta_1}, \dots, \rme^{\delta_{N-s}},$ we obtain the expected result.

\vskip 0.3truecm
\par 
(5) ($T^\star P^\star Q$): (only if $\cE$ is non--CM)  $ q_{1}=\frac{\omega_1}{2}, q_2=\frac{\omega_2}{2}$ and $ F (\omega_{1}, \eta_{1},\omega_{2}, \eta_{2}) =  F.$ 
The numbers $\wp(q_{1})$ and $\wp(q_2)$ are algebraic over the field $\QQ (g_2,g_3).$ The numbers $\zeta(q_{1})= \frac{\eta_1}{2}$ and $\zeta(q_2)= \frac{\eta_2}{2} $ belong to the field $ \overline{\QQ(g_2,g_3)}(\eta_1,\eta_2)$ by \cite[Lemma 3.1 (3)]{BW}. According to Proposition \ref{Cor:f_q(z)qTorsion}, for $i'=1, \dots, n',$ the numbers $f_{q_1}(p_{i'})$ and $f_{q_2}(p_{i'})$ belongs to $\overline{\QQ (g_2,g_3)} ( \wp(p_{i'}),\wp'(p_{i'})).$ From Legendre relation \eqref{Equation:Legendre}  and inequality \eqref{GACMM^T} we get
\[ \mathrm{t.d.} F =	\mathrm{t.d.}\, \QQ \Big(t_l,\rme^{t_l}, g_2,g_3,\omega_1, \omega_2, q_j, p_i, \wp(\frac{\omega_1}{2}),\wp\Big(\frac{\omega_2}{2}\Big), \wp(q_j), \eta_1,\eta_2, \zeta(q_j), \wp(p_i), \zeta(p_i), \]
\[ f_{\frac{\omega_1}{2}}(p_{i'}),f_{\frac{\omega_2}{2}}(p_{i'}) , f_{q_{j'}}(p_{i'}) ,\rme^{t_{i''j''}} f_{q_{j''}}(p_{i''}),t_{i''j''}\Big)_{\substack{j=3, \dots, r \\  i=1, \dots,n \\j'=3, \dots, r' \\  i'=1, \dots,n' \\l=1, \dots,s \\j''= r', \dots, r'' \\  i''= n', \dots,n''}}   \geqslant \]
\[ 4 + 2[n+(r-2)] +n'(r'-2)+ (n''-n')(r''-r')+s =  2(n+r) +n'(r'-2)+ (n''-n')(r''-r')+s .\]

\vskip 0.3truecm
\par 
(6) ($T^\star P Q^\star$): (only if $\cE$ is non--CM)  $p_{1}=\frac{\omega_1}{2}, p_2=\frac{\omega_2}{2}$ and $ F (\omega_{1}, \eta_{1},\omega_{2}, \eta_{2}) =  F.$ 
 It is the dual case of ($T^\star P^\star Q$)
 that we left to the reader (Hint: use Lemma \ref{Cor:f_q(p)pTorsion} and applied the Grothendieck-Andr\'{e} period Conjecture to the 1-motive $M \oplus M^T \oplus M^{T'},$ where $	M^{T'}=[u^{'T}:\ZZ \to  \GG_m^{2r'}],  u^{T'}(1)= (\rme^{\frac{1}{2}(\omega_1 \zeta(q_{j'})-\eta_1 q_{j'})},\rme^{\frac{1}{2}(\omega_2 \zeta(q_{j'})-\eta_2 q_{j'})})_{j'} \in   \GG_m^{2r'} (\CC)$).

\vskip 0.3truecm

\par
(7) ($T P^\star Q^\star$):  $  t_s= 2 \pi \ii.$ We distinguished two cases:

7.1) CM case:  Chudnovsky Theorem \eqref{Chudnovsky} and Legendre equation \eqref{Equation:Legendre} imply that 
the five complex numbers $\omega_1, \eta_1,\omega_2, \eta_2, 2 \pi \ii $ generate a field of transcendence degree 2 over $\QQ$, which has $\omega_1, 2 \pi \ii$  as transcendental basis. The inequality \eqref{GACMM^T} furnishes
\[ \mathrm{t.d.} F  (\omega_1)=\mathrm{t.d.} F (\omega_{1}, \eta_{1},\omega_{2}, \eta_2) = \mathrm{t.d.}\, K_{ M \oplus M^T} \big(\mathrm{periods}(M \oplus M^T)\big)=\] 
\[2 + 2(n+r) +n'r'+ (n''-n')(r''-r')+ s-1 = 1 + 2(n+r) +n'r'+ (n''-n')(r''-r')+ s. \]	
Removing the number $\omega_1,$ we get the expected result.

7.2) non--CM case: Grothendieck-André period Conjecture applied to $\cE$ and Legendre equation \eqref{Equation:Legendre} imply that 
the five complex numbers $\omega_1, \eta_1,\omega_2, \eta_2, 2 \pi \ii $ generate a field of transcendence degree 4 over $\QQ$, which has $\omega_1, \eta_1,\omega_2, 2 \pi \ii$  as transcendental basis.
From the inequality \eqref{GACMM^T} we get
\[ \mathrm{t.d.} F (\omega_{1}, \eta_{1},\omega_{2}) = \mathrm{t.d.}\, K_{ M \oplus M^T} \big(\mathrm{periods}(M \oplus M^T)\big)=\] 
\[ 4+ 2(n+r) +n'r'+ (n''-n')(r''-r')+ s -1 = 3 + 2(n+r) +n'r'+ (n''-n')(r''-r')+ s. \]	
Removing the three numbers $\omega_{1}, \eta_{1},\omega_{2}$, we get the expected inequality.

\vskip 0.3truecm
\par 
(8) ($T P^\star\tilde{Q}$):  $  t_s= 2 \pi \ii,  q_1=\frac{\omega_1}{2}$ and $ F (\omega_{1}, \eta_{1},\omega_{2}, \eta_{2}) =  F (\omega_{2}, \eta_{2}).$ 
The number $\wp(q_1)$ is algebraic over the field $\QQ (g_2,g_3)$ and $\zeta(q_1)= \frac{\eta_1}{2} \in \overline{\QQ(g_2,g_3)}(\eta_1,\eta_2)$ by \cite[Lemma 3.1 (3)]{BW}. According to Proposition \ref{Cor:f_q(z)qTorsion}, for $i'=1, \dots, n,$ the number $f_{q_1}(p_{i'})$ belongs to $\overline{\QQ (g_2,g_3)} ( \wp(p_{i'}),\wp'(p_{i'})).$ We distinguished two cases:

8.1) CM case:  Chudnovsky Theorem \eqref{Chudnovsky} and Legendre equation \eqref{Equation:Legendre} imply that 
the five complex numbers $\omega_1, \eta_1,\omega_2, \eta_2, 2 \pi \ii $ generate a field of transcendence degree 2 over $\QQ$, which has $\omega_1, 2 \pi \ii$  as transcendental basis. Hence the inequality \eqref{GACMM^T} implies
\[ \mathrm{t.d.} F =   \mathrm{t.d.} F (\omega_{2}, \eta_{2}) =	\mathrm{t.d.}\, \QQ \Big(t_l,\rme^{t_l}, g_2,g_3,\omega_1, q_j, p_i,\]
\[ \wp\Big(\frac{\omega_1}{2}\Big), \wp(q_j), \eta_1,\zeta(q_j), \wp(p_i), \zeta(p_i), f_{\frac{\omega_1}{2}}(p_{i'}),  f_{q_{j'}}(p_{i'}), \rme^{t_{i''j''}} f_{q_{j''}}(p_{i''}),t_{i''j''},\omega_{2}, \eta_{2}\Big)_{ \substack{j=2, \dots, r \\  i=1, \dots,n \\j'=2, \dots, r' \\  i'=1, \dots,n' \\l=1, \dots,s \\j''= r', \dots, r'' \\  i''= n', \dots,n''}}  \geqslant \]
\[ 2 + 2[n+(r-1)] +n'(r'-1)+ (n''-n')(r''-r')+s-1 = 2(n+r) +n'(r'-1)+ (n''-n')(r''-r')+s-1 .\]
Remark that $ 2 \pi \ii \subset \sum_l \QQ t_l$ and $\Omega \subset \sum_j  k q_j .$ 

8.2) non--CM case:  Grothendieck-André period Conjecture applied to $\cE$ and Legendre equation \eqref{Equation:Legendre} imply that 
the five complex numbers $\omega_1, \eta_1,\omega_2, \eta_2, 2 \pi \ii $ generate a field of transcendence degree 4 over $\QQ$, which has $\omega_1, \eta_1,\omega_2, 2 \pi \ii$  as transcendental basis. From the inequality \eqref{GACMM^T} we obtain
\[ \mathrm{t.d.} F (\omega_{2}) = 	\mathrm{t.d.}\, \QQ \Big(t_l,\rme^{t_l}, g_2,g_3,\omega_1, q_j, p_i,\]
\[ \wp\Big(\frac{\omega_1}{2}\Big), \wp(q_j), \eta_1,\zeta(q_j), \wp(p_i), \zeta(p_i), f_{\frac{\omega_1}{2}}(p_{i'}),  f_{q_{j'}}(p_{i'}),\rme^{t_{i''j''}} f_{q_{j''}}(p_{i''}),t_{i''j''}, \omega_{2}, \eta_{2}\Big)_{ \substack{j=2, \dots, r \\  i=1, \dots,n \\j'=2, \dots, r' \\  i'=1, \dots,n' \\l=1, \dots,s \\j''= r', \dots, r'' \\  i''= n', \dots,n''}}   \geqslant \]
\[ 4 + 2[n+(r-1)] +n'(r'-1)+ (n''-n')(r''-r')+s-1 = 1+ 2(n+r)+n'(r'-1)+ (n''-n')(r''-r')+s .\]
Removing the number $\omega_{2},$ we conclude. 

\vskip 0.3truecm
\par 
(9) ($T \tilde{P} Q^\star$):  $ t_s= 2 \pi \ii,  p_1=\frac{\omega_1}{2}$ and $ F (\omega_{1}, \eta_{1},\omega_{2}, \eta_{2}) =  F (\omega_{2}, \eta_{2}).$ 
It is the dual case of ($T P^\star\tilde{Q}$) that we left to the reader (Hint: use Lemma \ref{Cor:f_q(p)pTorsion} and applied the Grothendieck-Andr\'{e} period Conjecture to the 1-motive $M \oplus M^T \oplus M^{T'},$ where $	M^{T'}=[u^{'T}:\ZZ \to  \GG_m^{r'}],  u^{T'}(1)= (\rme^{\frac{1}{2}(\omega_1 \zeta(q_{j'})-\eta_1 q_{j'})})_{j'} \in   \GG_m^{r'} (\CC)$).

Remark that in the CM case, $ 2 \pi \ii \subset \sum_l \QQ t_l$ and $\Omega \subset \sum_i  k p_i .$

\vskip 0.3truecm
\par 
(10) ($T P Q$): (only if $\cE$ is non--CM)  $ t_s= 2 \pi \ii,  p_1=\frac{\omega_1}{2}, q_1=\frac{\omega_2}{2}$ and 
 $ F (\omega_{1}, \eta_{1},\omega_{2}, \eta_{2}) =  F.$ 
We use the same notation as in ($T^\star P Q$).
	Let $N$ be the dimension of the sub $\QQ$--vector space $< t_l,\alpha_j >_{l,j}$ of $\CC / 2 \pi \ii \QQ$ generated by the classes of the complex numbers $  t_l$ and $ \alpha_j$  modulo $2 \pi \ii \QQ.$ Since $ \dim_\QQ < t_l>_l=s-1$, we have that $N \geqslant s-1$ and without loss of generalities, we may assume that $ t_1, \dots  t_{s-1}, \delta_1, \dots, \delta_{N-s+1}$ is a basis of $< t_l ,\alpha_j  >_{l,j}$. 
The inequality \eqref{GACMM^TM^T'} implies
\[ \mathrm{t.d.} F (\rme^{\delta_l})_{l=1, \dots,N-s+1} =	\mathrm{t.d.}\, \QQ \Big(t_l,\rme^{t_l},\rme^{\delta_o}, g_2,g_3, \omega_2,q_j, \omega_1,p_i, \wp\Big(\frac{\omega_2}{2}\Big),  \wp(q_j),\eta_2,  \zeta(q_j), \wp\Big(\frac{\omega_1}{2}\Big), \wp(p_i),  \]
\[\eta_1, \zeta(p_i), f_{\frac{\omega_2}{2}}\Big(\frac{\omega_1}{2} \Big),f_{\frac{\omega_2}{2}}(p_{i'}), f_{q_{j'}}\Big(\frac{\omega_1}{2}\Big) f_{q_{j'}}(p_{i'}) ,\rme^{t_{i''j''}} f_{q_{j''}}(p_{i''}),t_{i''j''} \Big)_{ \substack{j=2, \dots, r \\  i=2, \dots,n \\j'=2, \dots, r' \\  i'=2, \dots,n' \\l=1, \dots,s \\ o=1, \dots, N-s+1 \\j''= r', \dots, r'' \\  i''= n', \dots,n''}}   \geqslant \]
\[ 4 + 2[n-1+r-1] +(n'-1)(r'-1)+ (n''-n')(r''-r')+N =\]
\[  2(n+r) +(n'-1)(r'-1)+ (n''-n')(r''-r')+(s-1) +N-s+1 .\]
Removing the $N-s+1$ numbers $\rme^{\delta_1}, \dots, \rme^{\delta_{N-s+1}}$ we obtain the expected result.

Remark that $ 2 \pi \ii \subset \sum_l \QQ t_l$ and $\Omega \subset \sum_i  k p_i + \sum_j  k q_j  .$ 

\vskip 0.3truecm
\par 
(11) ($T P^\star Q$): (only if $\cE$ is non--CM)  $   t_s= 2 \pi \ii,  q_{1}=\frac{\omega_1}{2}, q_2=\frac{\omega_2}{2}$ 
 and  $ F (\omega_{1}, \eta_{1},\omega_{2}, \eta_{2}) =  F.$ 
The numbers $\wp(q_{1})$ and $\wp(q_2)$ are algebraic over the field $\QQ (g_2,g_3).$ The numbers $\zeta(q_{1})= \frac{\eta_1}{2}$ and $\zeta(q_2)= \frac{\eta_2}{2} $ belong to the field $ \overline{\QQ(g_2,g_3)}(\eta_1,\eta_2)$ by \cite[Lemma 3.1 (3)]{BW}. According to Proposition \ref{Cor:f_q(z)qTorsion}, for $i'=1, \dots, n',$ the numbers $f_{q_{1}}(p_{i'})$ and $f_{q_2}(p_{i'})$ belongs to $\overline{\QQ (g_2,g_3)} ( \wp(p_{i'}),\wp'(p_{i'})).$

From the inequality \eqref{GACMM^T} we get
\[ \mathrm{t.d.} F =	\mathrm{t.d.}\, \QQ \Big(t_l,\rme^{t_l}, g_2,g_3,\omega_1, \omega_2, q_j,  p_i, \wp\Big(\frac{\omega_1}{2}\Big),\wp\Big(\frac{\omega_2}{2}\Big),\wp(q_j),\eta_1,\eta_2, \zeta(q_j),  \]
\[ \wp(p_i), \zeta(p_i), f_{\frac{\omega_1}{2}}(p_{i'}),f_{\frac{\omega_2}{2}}(p_{i'}) f_{q_{j'}}(p_{i'}), \rme^{t_{i''j''}} f_{q_{j''}}(p_{i''}),t_{i''j''}\Big)_{ \substack{j=3, \dots, r \\  i=1, \dots,n \\j'=3, \dots, r' \\  i'=1, \dots,n' \\l=1, \dots,s \\j''= r', \dots, r'' \\  i''= n', \dots,n''}}   \geqslant \]
\[ 4 + 2[n+(r-2)] +n'(r'-2)+ (n''-n')(r''-r')+s-1 =  2(n+r)   +n'(r'-2)+ (n''-n')(r''-r')+s-1 .\]

Remark that $ 2 \pi \ii \subset \sum_l \QQ t_l$ and $\Omega \subset \sum_j  k q_j  .$

\vskip 0.3truecm
\par 
(12) ($T P Q^\star$): (only if $\cE$ is non--CM)  $   t_s= 2 \pi \ii,  p_{1}=\frac{\omega_1}{2}, p_2=\frac{\omega_2}{2}$ 
and $ F (\omega_{1}, \eta_{1},\omega_{2}, \eta_{2}) =  F.$ 
It is the dual case of ($T P^\star Q$)
that we left to the reader (Hint: use Lemma \ref{Cor:f_q(p)pTorsion} and applied the Grothendieck-Andr\'{e} period Conjecture to the 1-motive $M \oplus M^T \oplus M^{T'},$ where $	M^{T'}=[u^{'T}:\ZZ \to  \GG_m^{2r'}],  u^{T'}(1)= (\rme^{\frac{1}{2}(\omega_1 \zeta(q_{j}')-\eta_1 q_{j'})},\rme^{\frac{1}{2}(\omega_2 \zeta(q_{j'})-\eta_2 q_{j'})})_{j'} \in   \GG_m^{2r'} (\CC)$).

Remark that $ 2 \pi \ii \subset \sum_l \QQ t_l$ and $\Omega \subset \sum_i  k p_i .$ 
\end{proof}

\section{$\sigma$-Conjecture}\label{sigma}

Applying the multiplication formula of the $\sigma$ function \eqref{sigma(mz)} with $m=2$, for any complex number $p$ which does not belong to $\Omega$ we have the equalities
\begin{align}\label{eq:sigma1}
	f_{p}(p) \e^{\zeta(p) p}&= \frac{\sigma (2p)}{\sigma(p)^2}  \\
	\nonumber	& = -  \frac{ \sigma(p)^4 \psi_2(\wp(p),\wp'(p))  }{\sigma(p)^2 } \\
	\nonumber	& = -   \sigma(p)^2 \psi_2(\wp(p),\wp'(p)). 
\end{align}

\soluzioni{
Let $p$ be complex numbers in $\CC \smallsetminus( k \, \omega_1 + k \; \omega_2 )$ and let $G$ be the extension of the elliptic curve $\cE$ by $\GG_m$ parametrized by the point $P \in \cE (\CC)$.
Consider the 1-motive
\begin{equation} \label{GGPC-1-motive-P=Q}
	M=[u:\ZZ \to  G],\quad  u(1)= R \in   G (\CC)
\end{equation}
where
\begin{align*}
	\nonumber	R&=\exp_{G}(p,\zeta(p) p)\\
	\nonumber	&=  \sigma(p)^3\Big[ \wp(p):\wp'(p):1 : \rme^{\zeta(p) p} f_{p}(p):  \rme^{\zeta(p) p} f_{p}(p)  \wp(p) \Big]
\end{align*}
The field of definition of the 1-motive $M=[u:\ZZ \rightarrow G] $ defined in \eqref{GGPC-1-motive-P=Q} is 
$$K:=\QQ(g_2,g_3,P,R)=\QQ(g_2,g_3, \wp(p), \rme^{\zeta(p) p} f_{p}(p) ).$$
Because of \eqref{eq:sigma1} observe that 
$$\overline{K} =\overline{\QQ (g_2,g_3, \wp(p),\sigma(p))}.$$
 The field \eqref{field} generated by the periods of $M$ over the field of definition of $M$ is
\[	K \big(\mathrm{periods}(M)\big) =
\QQ\big( g_2,g_3, \wp(p),\sigma(p),\omega_{1}, \eta_{1},\omega_{2}, \eta_{2},  2 \ii \pi, p  ,  \zeta(p)  \big).
\]
Since the identity is not an antisymmetric endomorphism, by \cite[Corollaty 4.5 (3)(b)]{BP} the equality \eqref{DimGalmot} becomes 
\[\dim \Galmot (M) = \frac{4}{\dim_\QQ k} + 2 \dim B +\dim Z'(1) = \frac{4}{\dim_\QQ k} + 3 \]

The GGP Conjecture applied to the 1-motive \eqref{GGPC-1-motive-P=Q} furnishes the inequality
\begin{equation*}
	\mathrm{t.d.}\, \QQ \big( g_2,g_3,\wp(p),\sigma(p),\omega_{1}, \eta_{1},\omega_{2}, \eta_{2}, p  ,  \zeta(p) \big) 	\geq  \frac{4}{\dim_\QQ k} + 3
\end{equation*}
Removing the two numbers $\omega_1, \eta_1$ in the case of complex multiplication (recall that by Lemma \ref{OmegaEta} the number $\omega_2$ and $\eta_2$ are algebraic over the field $k(g_2,g_3,\omega_1,\eta_1)$), and removing the four numbers $\omega_{1}, \eta_{1},\omega_{2}, \eta_{2}$ in the case of not complex multiplication we get 
\begin{equation*}
	\mathrm{t.d.}\, \QQ \big( g_2,g_3,p, \wp(p),  \zeta(p), \sigma(p) \big) 	\geq   3
\end{equation*}
}

Let	$ p_1,\dots,p_n$ be complex numbers in $\CC \smallsetminus \Omega.$
Denote by $G_{i}$ the extension of $\cE$ by $\GG_m$ parame\-tri\-zed by the point $P_i $. Let $G$ be the extension of $\cE$ by $\GG_m^n$ parametrized by $n$ points $P_1, \ldots, P_n \in  \cE (\CC): G$ is isomorphic to the product of extensions $G_{1} \times \dots \times G_{n}$.
 Consider the auto-dual 1-motive 
$	M=[u:\ZZ \to  G],  u(1) =\big(R_{1}, \dots , R_{n} \big) \in G_{1} (\CC) \times \dots \times G_{n} (\CC),$ with
\begin{equation}\label{eq:sigma2}
		R_i=\exp_{G_{i}}(p_i,\zeta(p_i) p_i) 
\end{equation}
for $i=1, \dots,n$ (auto-dual means $n=r$ and $p_i=q_i$). 

In this section we assume the complex numbers $ p_1,\dots,p_n$ to be $k$--linearly independent and in order to simplify notations we set 
\[n_1 := \dim_k < p_i>_{i} \qquad \mathrm{and}  \qquad  n_2:= \tor(p_i)= n-n_1.\]

\begin{proposition}\label{dimMpp} 
	Let $	M=[u:\ZZ \to  G], u(1) =\big(R_{1}, \dots , R_{n} \big) \in G_{1} (\CC) \times \dots \times G_{n} (\CC)$ be the auto-dual 1-motive defined by the points $p_i=q_i$ (clearly $n=r$) and $t_i=\zeta(p_i) p_i.$  Assume the complex numbers $ p_1,\dots,p_n$ to be $k$--linearly independent. Then
	\[ \dim \Galmot(M)= \frac{4}{\dim_\QQ k} +
	\begin{cases*}
		 3 \dim_k < p_i>_{i} + \dim_\QQ < \zeta(p_i) p_i >_{i=1,n_2} & if $n_2 \not= 0,$ \\
		  3 n & if $n_2=0$
	\end{cases*} \]
	where 
	\begin{itemize}
		\item $< p_i>_{i}$ is the sub $k$--vector space of $\CC / (\Omega \otimes_\ZZ \QQ)$ generated by the classes of  $ p_1, \dots, p_n $  modulo $\Omega \otimes_\ZZ \QQ,$
		\item $< \zeta(p_i) p_i>_{i=1,n_2}$ is the sub $\QQ$--vector space of $\CC / 2 \pi \ii \QQ$ generated by the classes of the complex numbers $ \zeta(p_i) p_i$  modulo $2 \pi \ii \QQ.$
	\end{itemize} 
\end{proposition}

\begin{proof} If $n_2 =n-n_1 \not=0$, without loss of generality we assume  $p_{n_2+1}, \dots, p_n \notin  (\Omega \otimes_\ZZ \QQ). $  For $i=1, \dots n$ consider the 1-motive introduced in \eqref{Mij}
\[
M_{ii}=[u_i:\ZZ \to  G_{i}],\qquad  u_i(1)=R_{i} =\exp_{G_{i}}(p_i,\zeta(p_i) p_i) \in G_{i}(\CC)
\]
Let $M_1 =\oplus_{i=n_2+1}^nM_{ii}$ and $M_2= \oplus_{i=1}^{n_2}M_{ii}.$ If $n_2=0,$ we don't use the 1-motive $M_2$ in this proof. We add the index 1 (\textit{resp}. 2, \textit{resp}. $ii$) to the pure motives underlying the unipotent radical of the 1-motive $M_1$  (\textit{resp}. $M_2$, \textit{resp}. $M_{ii}$ respectively). 

We start studying the unipotent radical of the motivic Galois group of the 1-motive $M_2.$
Since $p_i \in \Omega \otimes_\ZZ \QQ$ for $i=1, n_2,$ according to Theorem \ref{Teo:dimGal(M)} 
\begin{equation}\label{eq:UR(M_2)}
	\dim B_2 = \dim Z_2'(1) =0 \quad  \mathrm{and}  \quad \dim Z_2(1) / Z_2'(1) = \dim_\QQ <  \zeta(p_i) p_i>_{ i=1, n_2}.
\end{equation}

We now analyze the dimension of $\UR( M_1).$
The complex numbers $ p_{n_2+1},\dots,p_n,$ are $k$-linearly independent and do not belong to $\Omega \otimes_\ZZ \QQ.$ Hence
\begin{equation}\label{eq:UR(M_1)-1}
	\dim B_1 =n - n_2 =n_1=\dim_k < p_i>_{i} .
\end{equation}
Since the identity is not an antisymmetric endomorphism, by Theorem \ref{Teo:dimGal(M)} $\dim Z_{ii}'(1) =1$ and 
$\dim Z_{ii}(1) / Z_{ii}'(1) = 0$  for $i= n_2+1, \dots , n.$ The inequality
\[
\dim Z_1(1) / Z_1'(1) \leqslant  \oplus_{i= n_2+1}^n \dim Z_{ii}(1) / Z_{ii}'(1)=0,
\]
implies then the equality
\begin{equation}\label{eq:UR(M_1)-2}
	\dim Z_1(1) / Z_1'(1) = 0.
\end{equation}
	We now prove that 
	\begin{equation}\label{eq:UR(M_1)-3}
		\dim Z_1(1)  = n_1.
	\end{equation}
 Since the complex numbers $p_{n_2+1}, \dots,p_n$ are $k$-linearly independent, 
$B_1=\cE^{n_1}$ and the inclusion $I : B_1 \to  \cE^{n_1} \times \cE^{*{n_1}}$ is just the diagonal morphism $d: B_1 \to B_1 \times B_1, (P_{n_2+1}, \dots, P_n) \mapsto (P_{n_2+1}, \dots, P_n, P_{n_2+1}, \dots, P_n)$ (recall that we identity $\cE^*$ with $\cE$). Because $d(B_1)= \cE^{n_1} \times \cE^{*{n_1}}$
	we may take $\gamma_{i}\in\mathrm{Hom}_{\mathbb Q}(d(B_1),\cE)$ as the projection of $d(B_1)$ onto the $i$-th factor $\cE$ of $\cE^{n_1}\times \cE^{*{n_1}}$ for $i=1, \dots, n_1$, and $\gamma_k^*\in\mathrm{Hom}_{\mathbb Q}(d(B_1),\cE^*)$ as the projection of $d(B_1)$ onto the $k$-th factor $\cE^*$ of $\cE^{n_1}\times \cE^{*{n_1}}$ for $k=1, \dots, {n_1}$. 
	The transpose $\gamma_{i}^t: \cE^* \to \cE^{*{n_1}} \times \cE^{{n_1}}  $ is the inclusion of $\cE^*$ into the $i$-th factor $\cE^*$ of $\cE^{*{n_1}}\times \cE^{{n_1}} ,$  and 
	$\gamma_{k}^{*t}: \cE \to  \cE^{*{n_1}} \times \cE^{{n_1}}  $ is  the inclusion of $\cE$ into the $k$-th factor $\cE$ of $\cE^{*{n_1}}\times \cE^{{n_1}}.$
	Therefore $\beta_{i,i}=\gamma_i^t\circ\gamma_i^*$ is the identity between the $i$-th factor $\cE^*$ of $d(B_1)$ and the $i$-th factor $\cE^*$ of $\cE^{*{n_1}} \times \cE^{{n_1}} $ and zero on the other factors. Similarly $\beta_{i,i}^t = \gamma_i^{*t} \circ \gamma_i$ is the identity between the $i$-th factor $\cE$ of $d(B_1)$ and the $i$-th factor $\cE$ of $\cE^{*{n_1}} \times \cE^{{n_1}} $ and zero on the other factors.
	For any point $P$ in the $i$-th factor $\cE$ of $B_1$, we have that $(\beta_{i,i}+\beta_{i,i}^t) (d(P))=d(P).$ Hence $ d^{-1} \circ (\beta_{i,i}+\beta_{i,i}^t) \circ d$ is a group homomorphism from $B_1$ to $B^*_1$ which is the identity between the $i$-th factor $\cE$ of $B_1$ and the $i$-th factor $\cE^*$ of $B^*_1$ and zero on the other factors.
	Assume there is a relation with coefficients in $\QQ$
	\[\sum_{i=1}^{n_1}\lambda_{i,i} \big(d^{-1} \circ(\beta_{i,i}+\beta_{i,i}^t)  \circ d \big) = 0.\]
	Evaluating the left hand side at a point $P$ which lies in the $i$-th factor $\cE$ of $B_1$, we get  $\lambda_{i,i} \, P =0$ which  implies $\lambda_{i,i}=0$. Hence the $(d^{-1} \circ(\beta_{i,i}+\beta_{i,i}^t)  \circ d) $ are linearly independent over $\mathbb Q$ and by Theorem \ref{Teo:dimGal(M)} the dimension of $Z'(1)$ is $n_1.$

According to \cite[Lemma 2.2]{B19}, the 1-motive $M$ and the 1-motive $M_1 \oplus M_2$ generate the same tannakian category and so $\Galmot(M) = \Galmot(M_1 \oplus M_2).$
 Since the contributions of the pure motives underlying the unipotent radicals $\UR(M_1)$ and $\UR(M_2)$ are complementary, from equalities \eqref{eq:UR(M_2)}, \eqref{eq:UR(M_1)-1}, \eqref{eq:UR(M_1)-2} and \eqref{eq:UR(M_1)-3} we conclude that 
\begin{align*}
	\dim \UR (M)= \dim \UR (M_1 \oplus M_2) &= \dim \UR (M_1) +  \dim \UR ( M_2)\\
	&=   2 \dim B_1 + \dim Z_1'(1) + \dim  Z_2(1) / Z_2'(1) \\
	&=  3 \dim_k < p_i>_{i} + \dim_\QQ < \zeta(p_i) p_i>_{i=1,n_2}.
\end{align*}
\end{proof}

Adding hypotheses, we can estimate the dimension of the $\QQ$--vector space $< \zeta(p_i) p_i>_{i=1,n_2}.$ In fact, we have the following corollary due to M. Waldschmidt:

\begin{corollary}\label{ImplicationsGA} Let $n_2 =n-n_1 \not=0.$ Assume $g_2,g_3$ to be algebraic.
	\begin{enumerate}
		\item If the elliptic curve has complex multiplication ($n_2=1$), $\dim_\QQ < \zeta(p_1) p_1>=n_2,$
		and so
		\[	 \dim \Galmot(M)=  2+  2 \dim_k < p_i>_{i} + n.\]
		\item If the elliptic curve has not complex multiplication ($ 1\leqslant n_2 \leqslant 2$), assuming the Gro\-then\-dieck-André period Conjecture we have that $\dim_\QQ < \zeta(p_i) p_i>_{i=1,n_2} =n_2,$ and in particular
		\[	 \dim \Galmot(M)=  4+  2 \dim_k < p_i>_{i} + n.\]	
	\end{enumerate}
\end{corollary}

\begin{proof}
	 In the CM case the $k$--vector space of torsion points is generated by $\frac{\omega_1}{2}$ because of \eqref{Equation:tau}, while in the non CM case it is generated by $\frac{\omega_1}{2}$ and $\frac{\omega_2}{2}.$ Hence without loss of generality we may assume $p_1=\frac{\omega_1}{2}$ in the CM case, and $p_i=\frac{\omega_i}{2}$ for $i=1, n_2$ in the non CM case. Therefore 
	\[\dim_\QQ < \zeta(p_i) p_i>_{i=1,n_2} =\dim_\QQ < \eta_i \omega_i>_{i=1,n_2}.\]
	 We distinguish two cases:
	\begin{enumerate}
		\item CM case: Chudnovsky Theorem \eqref{Chudnovsky} implies that 
		 the six complex numbers $\omega_1, \eta_1,\omega_2, \\ \eta_2, 2 \pi \ii , \tau$ generate a field of transcendence degree 2 over $\QQ$, which has $\omega_1, \pi$ or $\omega_2, \pi$ as transcendental bases. Moreover the only algebraic relations between these six numbers are 
\[\begin{cases}
	\omega_2=\tau\omega_1  & \eqref{Equation:tau}	\\
 \omega_2 \eta_1- \eta_2 \omega_1 = 2 \pi \ii &   \eqref{Equation:Legendre}\\
	 A\eta_1- C\tau\eta_2-\kappa\omega_2=0 & \eqref{Equation:kappa}\\
	A+B \tau+C \tau^2=0 & \eqref{Polynomial:tau} \\
\end{cases}		
\]		
with $\kappa \in \oQQ.$ Using the first three polynomial relations we obtain
		\[ \Big(  \frac{A}{C \tau } - \tau  \Big) \omega_1 \eta_1 - \frac{\kappa}{C} \omega_1^2 + 2 \pi \ii =0 ,\]
	\[ \Big( \frac{A}{C \tau } - \tau \Big) \omega_2 \eta_2 - \frac{\kappa}{C} \omega_2^2 + 2 \pi \ii \frac{A}{C } =0.\]
	
	Assume first $\kappa \not=0.$ Since  $ \mathrm{t.d.}\, \QQ (\omega_1, \pi )=2$ and $ \mathrm{t.d.}\, \QQ (\omega_2, \pi)=2,$
		the numbers 
		\[ \frac{\kappa}{C 2 \pi \ii} \omega_1^2 -1 \qquad \mathrm{and} \qquad  \frac{\kappa}{C2 \pi \ii} \omega_2^2 - \frac{A}{C }\]
		are transcendental, which implies that $\frac{\omega_1 \eta_1}{ 2 \pi \ii} $ and $\frac{\omega_2 \eta_2}{ 2 \pi \ii}$ are also  transcendental. Hence neither $\eta_1 \omega_1$ nor $\eta_2 \omega_2$ belong to $2 \pi \ii \QQ,$ that is 
		\[ \dim_\QQ < \zeta(p_1) p_1>  =   \dim_\QQ < \eta_1 \omega_1> =1=n_2.\]
		
		Suppose now that $\kappa =0.$ The two numbers 
		\[ r_1:= \frac{2 \pi \ii}{\omega_1 \eta_1} = \tau - \frac{A}{C \tau}  \qquad \mathrm{and} \qquad  r_2:= \frac{2 \pi \ii}{\omega_2 \eta_2} = \frac{C}{A} \tau - \frac{1}{ \tau} \]
		are algebraic and satisfy the polynomial relations 
		\[ C \tau^2- r_1 C \tau-A=0 \qquad \mathrm{and} \qquad C \tau^2 - r_2 A \tau -A=0. \]
		Using \eqref{Polynomial:tau} we obtain
		\[ r_1= -\frac{B}{C}  - \frac{2A}{C \tau}  \qquad \mathrm{and} \qquad  r_2=  -\frac{B}{A}  - \frac{2}{ \tau} .\]
		 Since $\tau$ is irrational, $r_1$ and $r_2$ are also irrational.
		Hence neither $\eta_1 \omega_1$ nor $\eta_2 \omega_2$ belong to $2 \pi \ii \QQ,$ that is 
		\[ \dim_\QQ < \zeta(p_1) p_1>  =   \dim_\QQ < \eta_1 \omega_1> =1=n_2.\]
		
		\item In the non CM case, the Grothendieck-André period Conjecture implies that 
		\par\noindent	$\mathrm{t.d.}_{\QQ}\, \QQ (\omega_1, \omega_2,\eta_1,\eta_2) = 4.$ Using Legendre equation \eqref{Equation:Legendre}, we get that the complex numbers $\omega_1, \eta_1, 2 \ii \pi$ are algebraically independent over $\oQQ$, and idem for the complex numbers $\omega_2, \eta_2, 2 \ii \pi.$ Hence neither $\eta_1 \omega_1$ nor $\eta_2 \omega_2$ belong to $2 \pi \ii \QQ,$ that is 
		\[ \dim_\QQ < \zeta(p_i) p_i>_{i=1,n_2}  =   \dim_\QQ < \eta_i \omega_i>_{ i=1, n_2} =n_2.\]
	\end{enumerate}
\end{proof}

\begin{example}[by M. Waldschmidt]
 In some special cases it is possible to compute explicitly the product $\omega_i \eta_i$ for $i=1,2.$	Assume $g_2$ and $g_3$ to be algebraic.
 The condition  $\kappa =0$ is equivalent to the condition $g_2g_3=0.$
 
 If $g_3=0,$ by \cite[(5),(9)]{Wald08} we have that $A=C=1, B=0, \tau =\ii, \omega_2=\ii \omega_1$ and 
 $\eta_2= -\ii \eta_1.$ Then we obtain $r_1=r_2= 2 \ii$ and both the products $\omega_1 \eta_1$ and $\omega_2 \eta_2$ are equal to the real number $ \pi.$ More in general, for an arbitrary period $\omega =a \omega_1 +b \omega_2 \in \Omega \setminus\{0\}$ we get
 \[ \omega \eta = (a^2+b^2) \pi.\]
 Since the product $\omega \eta$ is a real number different from 0, it does not belong to $2 \pi \ii \QQ.$
 
 If now $g_2=0,$ by \cite[(6),(10)]{Wald08} we have $A=B=C=1,$ $\tau $ is the third root of unity $\rho=\frac{(-1+\ii \sqrt{3})}{2},$ $\omega_2=\rho \omega_1$ and $\eta_2=\rho^2 \eta_1.$ Then we get $r_1=r_2= \rho- \rho^2= \ii \sqrt{3,}$ and both the products $\omega_1 \eta_1$ and $\omega_2 \eta_2$ are equal to the real number $ \frac{2 \pi}{\sqrt{3}}$. For an arbitrary period $\omega =a \omega_1 +b \omega_2 \in \Omega \setminus\{0\}$ we obtain
 \[ \omega \eta = (a^2-ab+ b^2)  \frac{2 \pi}{\sqrt{3}}.\]
 Since the product $\omega \eta$ is a non-zero real number, it does not belong to $2 \pi \ii \QQ.$
\end{example}

\begin{theorem}\label{GA=>Sigma} 
	The Grothendieck-André period Conjecture applied to the auto-dual 1-motive $M=[u:\ZZ \to  G], u(1) =\big(R_{1}, \dots , R_{n} \big) \in G_{1} (\CC) \times \dots \times G_{n} (\CC)$
	 defined by the points $p_i=q_i$ (clearly $n=r$) and $t_i=\zeta(p_i) p_i,$ such that  $ p_1,\dots,p_n$ are $k$--linearly indipendent, implies the $\sigma$-Conjecture \ref{SigmaC}.
\end{theorem}

\begin{proof}
Because of \eqref{eq:sigma1} the field \eqref{field} generated by the periods of $M$ over the field of definition of $M$ is
\begin{equation}
	\overline{K} \big(\mathrm{periods}(M)\big) =
\overline{\QQ (g_2,g_3, \wp(p_i),\sigma(p_i))}_i \big(\omega_1, \omega_2 , \eta_1, \eta_2 , 2 \ii  \pi, p_i , \zeta(p_i) \big)_{ i=1, \dots, n} 
\end{equation}

We distinguish several cases:

(1) Suppose $n_2 =0,$ that is the complex numbers $p_1, \dots, p_n$ do not belong to $\Omega \otimes_\ZZ \QQ.$
 By Proposition \ref{dimMpp}, 
the Grothendieck-André period Conjecture applied to $M$ furnishes the inequality
\begin{equation*}
	\mathrm{t.d.} \, \QQ \big( g_2,g_3, \wp(p_i),\sigma(p_i), \omega_1, \omega_2 , \eta_1, \eta_2 , p_i , \zeta(p_i) \big)_{  i=1, \dots, n} 	\geqslant  \frac{4}{\dim_\QQ k} + 3n.
\end{equation*}
Removing the two numbers $\omega_1, \eta_1$ in the CM case (recall that by Lemma \ref{OmegaEta} the numbers $\omega_2$ and $\eta_2$ are algebraic over the field $k(g_2,g_3,\omega_1,\eta_1)$), and removing the four numbers $\omega_{1}, \eta_{1},\omega_{2}, \eta_{2}$ in the non CM case, we get the expected inequality
\begin{equation*}
\mathrm{t.d.}\, \QQ \big( g_2,g_3, \wp(p_i),\sigma(p_i) , p_i , \zeta(p_i) \big)_{  i=1, \dots, n} \geqslant  3n.
\end{equation*}

(2) Assume $n_2 \not=0.$ 

(2.1) CM case: Let $p_1=\frac{\omega}{m},$ with $\omega$ a period and $m$ an integer, $m\geqslant 2,$ such that $\frac{\omega}{m} \notin \Omega$, and  $p_{2}, \dots, p_n$ do not belong to $ \Omega \otimes_\ZZ \QQ. $
According to \cite[Lemma 3.1 (3)]{BW} the number $\zeta(\frac{\omega}{m}) -\frac{\eta(\omega)}{m}$ is algebraic over $\QQ(g_2,g_3)$, and by Lemma \ref{OmegaEta} the numbers $\omega_2$ and $\eta_2$ are algebraic over the field $k(g_2,g_3,\omega_1,\eta_1).$ Hence the two fields
$\overline{\QQ} \big(g_2,g_3, \wp(p_i),\sigma(p_i),\omega_1, \omega_2 , \eta_1, \eta_2 , p_i , \zeta(p_i) \big)_{  i=1, \dots, n}$ and
$\overline{\QQ} \big(g_2,g_3, \wp(p_i),\sigma(p_i), p_i , \zeta(p_i) \big)_{  i=1, \dots, n}$
have the same algebraic closure and the same transcendence degree.
Proposition \ref{dimMpp} implies that 
\begin{align*}
	\Galmot(M)=& 2+  3 (n-1) + \dim_\QQ < \zeta(p_1) p_1>\\
%	=& 2n +(n-1) +\dim_\QQ < \zeta(p_1) p_1> \\
	=& 2n +\dim_k <p_i>_i +\dim_\QQ < \zeta(p_1) p_1>.
\end{align*}
The Grothendieck-André period Conjecture applied to the 1-motive $M$ furnishes then the expected inequality
\begin{equation*}
	\mathrm{t.d.}\, \QQ \big( \wp(p_i),\sigma(p_i),p_i, \zeta(p_i) \big)_{ i=1, \dots, n} \geqslant 2n +\dim_k <p_i>_i +\dim_\QQ < \zeta(p_1) p_1>.
\end{equation*}

(2.2) non CM case:

(2.2.1) $n_2=1$: Let $p_1=\frac{\omega}{m},$ with $\omega$ a period and $m$ an integer, $m\geqslant 2,$ such that $\frac{\omega}{m} \notin \Omega$, and  $p_{2}, \dots, p_n $ do not belong to $  \Omega \otimes_\ZZ \QQ. $
According to \cite[Lemma 3.1 (3)]{BW} the number $\zeta(\frac{\omega}{m}) -\frac{\eta(\omega)}{m}$ is algebraic over $\QQ$.
If $p_1 \notin \QQ \omega_1$,  the two fields
$\overline{\QQ} \big(g_2,g_3, \wp(p_i),\sigma(p_i),\omega_1, \omega_2 , \eta_1, \eta_2 , p_i , \zeta(p_i) \big)_{  i=1, \dots, n} $ and 
$\overline{\QQ} \big(g_2,g_3, \wp(p_i),\sigma(p_i),\omega_1,\eta_1, p_i , \zeta(p_i) \big)_{  i=1, \dots, n}$
have the same algebraic closure and the same transcendence degree.
Proposition \ref{dimMpp} furnishes that 
\begin{align*}
	\Galmot(M)=& 4+  3 (n-1) + \dim_\QQ < \zeta(p_1) p_1>\\
%	=& 2+2n +(n-1) +\dim_\QQ < \zeta(p_1) p_1> \\
	=& 2+ 2n +\dim_k <p_i>_i +\dim_\QQ < \zeta(p_1) p_1>.
\end{align*}
The Grothendieck-André period Conjecture applied to $M$ furnishes then the inequality
\begin{equation*}
	\mathrm{t.d.}\, \QQ \big(g_2,g_3, \wp(p_i),\sigma(p_i),\omega_1, \eta_1,p_i, \zeta(p_i) \big)_{ i=1, \dots, n} \geqslant 2+ 2n +\dim_k <p_i>_i +\dim_\QQ < \zeta(p_1) p_1>.
\end{equation*}
Removing the two numbers $\omega_1, \eta_1,$ we get the $\sigma$-Conjecture \ref{SigmaC}
\begin{equation*}
	\mathrm{t.d.}\, \QQ \big(g_2,g_3, \wp(p_i),\sigma(p_i),p_i, \zeta(p_i) \big)_{ i=1, \dots, n} \geqslant 2n +\dim_k <p_i>_i +\dim_\QQ < \zeta(p_1) p_1>.
\end{equation*}
If $p_1=\frac{\omega_1}{m}$, the two fields
$\overline{\QQ} \big(g_2,g_3, \wp(p_i),\sigma(p_i),\omega_1, \omega_2 , \eta_1, \eta_2 , p_i , \zeta(p_i) \big)_{  i=1, \dots, n}$ and\\
$\overline{\QQ} \big(g_2,g_3, \wp(p_i),\sigma(p_i),\omega_2,\eta_2, p_i , \zeta(p_i) \big)_{  i=1, \dots, n}$
have the same algebraic closure and the same transcendence degree.
Removing this time the two numbers $\omega_2, \eta_2,$  we get the $\sigma$-Conjecture \ref{SigmaC}.

(2.2.2) $n_2=2$: Let $p_1=\frac{\omega}{m}$ and $p_2 = \frac{\omega'}{m'},$ with $\omega, \omega'$ two periods and $m, m'$ two integers, $m,m'\geqslant 2,$ such that $\frac{\omega}{m}$ and $\frac{\omega'}{m'} \notin \Omega$, and  $p_{3}, \dots, p_n $ do not belong to $ \Omega \otimes_\ZZ \QQ. $
According to \cite[Lemma 3.1 (3)]{BW} the numbers $\zeta(\frac{\omega}{m}) -\frac{\eta(\omega)}{m}$  and $\zeta(\frac{\omega'}{m'}) -\frac{\eta(\omega')}{m'}$ are algebraic over $\QQ$. Hence the two fields
$\overline{\QQ} \big(g_2,g_3, \wp(p_i),\sigma(p_i),\omega_1, \omega_2 , \eta_1, \eta_2 ,  p_i , \zeta(p_i) \big)_{  i=1, \dots, n}$ and 
$\overline{\QQ} \big(g_2,g_3, \wp(p_i),\sigma(p_i), p_i , \zeta(p_i) \big)_{  i=1, \dots, n}$ 
have the same algebraic closure and the same transcendence degree (recall that $p_1$ and $p_2$ are $\QQ$--linearly independent).
Proposition \ref{dimMpp} implies that 
\begin{align*}
	\Galmot(M)=& 4+  3 (n-2) + \dim_\QQ < \zeta(p_i) p_i>_{i=1,2}\\
%	=& 2n +(n-2) +\dim_\QQ < \zeta(p_i) p_i>_{i=1,2}  \\
	=& 2n +\dim_k <p_i>_i +\dim_\QQ < \zeta(p_i) p_i>_{i=1,2}.
\end{align*}
The Grothendieck-André period Conjecture applied to 
$M$ furnishes then the expected inequality
\begin{equation*}
	\mathrm{t.d.}\, \QQ \big( g_2,g_3,\wp(p_i),\sigma(p_i),p_i, \zeta(p_i) \big)_{ i=1, \dots, n} \geqslant 2n +\dim_k <p_i>_i +\dim_\QQ < \zeta(p_i) p_i>_{i=1,2}.
\end{equation*}

\end{proof}

%-------------------------------------------
\bibliographystyle{plain}

\end{document}